\newtheorem{theo}{Theorem}
\newtheorem{pro}{Proposition}[section]
\newtheorem{lem}[pro]{Lemma}
\newtheorem{coro}[pro]{Corollary}
\newtheorem{remark}[pro]{Remark}
\def\XXint#1#2#3{{\setbox0=\hbox{$#1{#2#3}{\int}$}
     \vcenter{\hbox{$#2#3$}}\kern-.5\wd0}}
\def\({\left(}
\def\){\right)}
\def\1{\mathbf{1}}
\def\la{\langle}
\def\ra{\rangle}
\def\hal{\frac{1}{2}}
\def\a{\alpha}
\def\b{\beta}
\def\ro{\rho}
\def\g{\gamma}
\def\p{\partial}
\def\dist{\text{dist}\ }
\def\div{\, \mathrm{div} \, }
\def\curl{\, \mathrm{curl} \, }
\def\D{\displaystyle}
\def\ep{\varepsilon}
\def\lep{|\mathrm{log } \, \ep|}
\def\llep{\mathrm{log} \, \lep}
\def\R{\mathbb{R}}
\def\nab{\nabla}
\def\np{\nab^{\perp}}
\def\N{N_\ep}
\def\l{\lambda}
\def\v{\mathrm{\sc{v}}}
\def\pp{\mathrm{\sc{p}}}
\def\tV{\tilde V_\ep}
\def\d{d}
\def\om{\omega}
\def\h{\mathrm{h}}
\def\hci{H_{c_1}}
\def\T{\tilde{S}_\ep} 
\def\E{\mathcal{E}_\ep}
\def\t{\mathrm{T}}
\def\te{T_\ep}
\def\i{\int_{\R^2}}
\def\supp{\mathrm{Supp} \,} 
 \numberwithin{equation}{section}
\title[Mean Field Limits of Ginzburg-Landau]{Mean Field Limits of the Gross-Pitaevskii and  parabolic  Ginzburg-Landau equations}
\author[Sylvia Serfaty]{Sylvia Serfaty}
\address[Sylvia Serfaty]{Sorbonne Universit\'es, UPMC Univ. Paris 06, CNRS, UMR 7598, Laboratoire Jacques-Louis Lions, 4, place Jussieu 75005, Paris, France.
 \newline \& Institut Universitaire de France \newline \& Courant Institute, New York University, 251 Mercer st, NY NY 10012, USA.} 
\email{serfaty@ann.jussieu.fr}
\date{June 26, 2016}
\begin{document}
\maketitle

\begin{abstract}We prove that in a certain asymptotic regime, solutions of the Gross-Pitaevskii equation converge to solutions of the incompressible Euler equation, and solutions to the parabolic  Ginzburg-Landau equations converge to solutions of a 
limiting equation which we identify. 

We work in the setting of the whole plane (and possibly the whole three-dimensional space in the Gross-Pitaevskii case), in the asymptotic limit where $\ep$,  the characteristic lengthscale of the vortices, tends to $0$, and in a situation where the number of vortices  $\N$ blows up as $\ep \to 0$. The requirements are that $\N$ should blow up faster than $\lep$ in the Gross-Pitaevskii case, and at most like $\lep$ in the parabolic  case.  Both results  assume a well-prepared initial condition and regularity of the limiting initial data, and use the regularity of the solution to the limiting equations. 
  
In the case of the parabolic Ginzburg-Landau equation,   the limiting mean-field dynamical law that we identify coincides with the  one proposed by Chapman-Rubinstein-Schatzman and E  in the regime $\N\ll \lep$, but not if $\N$ grows faster.\end{abstract}

\noindent
{\bf keywords: } Ginzburg-Landau, Gross-Pitaevskii, vortices, vortex liquids, mean-field limit, hydrodynamic limit, Euler equation.\\
{\bf MSC: } 35Q56,35K55,35Q55,35Q31,35Q35 

\section{Introduction}
\subsection{Problem and background}
We are interested in the Gross-Pitaevskii equation 
\begin{equation}\label{gp}
i\p_t u= \Delta u + \frac{u}{\ep^2} (1-|u|^2) \quad \text{in} \ \R^2\end{equation} and the parabolic Ginzburg-Landau equation 
\begin{equation}
\label{pgl}
\p_t u = \Delta u +  \frac{u}{\ep^2} (1-|u|^2) \quad \text{in} \ \R^2\end{equation} in the plane, 
and also the three-dimensional version of the Gross-Pitaevskii equation 
 \begin{equation}\label{gp3}
i\p_t u= \Delta u + \frac{u}{\ep^2} (1-|u|^2) \quad \text{in} \ \R^3,\end{equation} 
all in the asymptotic limit $\ep \to 0$. 
 
These are famous equations of mathematical  physics, which arise in the modeling of superfluidity, superconductivity, nonlinear optics, etc.  The Gross-Pitaevskii equation is an important instance of a nonlinear Schr\"odinger equation.   These equations also come in a version with gauge, more suitable for the modeling of superconductivity, but whose essential mathematical features are similar to these, and which we will discuss briefly below. There is also interest in the ``mixed flow" case,  sometimes called complex Ginzburg-Landau equation
\begin{equation}\label{mixed}
(a+ib) \p_t u= \Delta u +  \frac{u}{\ep^2} (1-|u|^2) \quad \text{in} \ \R^2.\end{equation}
For further reference on these models,  one can see e.g.~\cite{t,tt,ak,livre}.

In these equations, the unknown function $u$ is complex-valued, and it can exhibit vortices, which are zeroes of $u$ with non-zero topological degree, and  a core  size on the order of $\ep$. In the plane, these vortices are  points, whereas in the three-dimensional space they are lines. We are interested in  one of the main open problems on Ginzburg-Landau dynamics, which is to understand the dynamics of vortices in the regime in which their number $\N$ blows up as $\ep \to 0$.   The only available results until now are due to Kurzke and Spirn \cite{ks} in the case of \eqref{pgl} and Jerrard and Spirn \cite{js2} in the case of \eqref{gp}. The latter concern a very dilute limit in which $\N$ grows slower than a power of $\log \lep$ as $\ep \to 0$ (more details are given below).

The  dynamics of vortices in \eqref{gp} and \eqref{pgl} was first studied in the case where their number  $N$ is bounded  as $\ep \to 0$ (hence can be assumed to be independent of $\ep$.
It was proven, either in the setting of the whole plane or that of a bounded domain, that, for ``well-prepared" initial data,  after suitable time rescaling,  their limiting positions obey the law
\begin{equation}\label{limW}
\frac{d a_i}{dt}=  (a+ \mathbb{J} b) \nab_i W (a_1, \dots , a_N) 
\end{equation}
where $\mathbb{J}$ is the rotation by $\pi/2$ in the plane, and 
$W$ is in the setting of the plane the so-called Kirchhoff-Onsager energy 
\begin{equation}\label{ko}
W(a_1, \dots, a_N) =- \pi \sum_{i \neq j} d_i d_j \log |a_i-a_j|\end{equation}
where the $d_i $'s are the degrees of the vortices and  are assumed to be initially in $\{1, -1\}$. In the setting of a bounded domain and prescribed Dirichlet boundary data, $W$  is \eqref{ko} with some additional terms accounting for boundary effects. It was introduced and derived in  that context  in the book of Bethuel-Brezis-H\'elein  \cite{bbh} where it was called the ``renormalized energy". 

In other words, the vortices move according to the corresponding flow (gradient, Hamiltonian, or mixed) of their limiting interaction energy $W$.
After some formal results based on matched asymptotics by Pismen-Rubinstein and E in \cite{pr,E}, these results were proven in the setting of a bounded domain  by Lin \cite{li} and Jerrard-Soner \cite{js} in the parabolic case, Colliander-Jerrard \cite{cj1,cj2}  and Lin-Xin \cite{lx2} with later improvements by Jerrard-Spirn \cite{jsp1} in the Schr\"odinger case, and  Kurzke-Melcher-Moser-Spirn \cite{kmms} in the mixed flow case.  In the setting of the whole plane, the analogous results were obtained by Lin-Xin \cite{lx}  in the parabolic case,   Bethuel-Jerrard-Smets \cite{bjs} in the Schr\"odinger case, and Miot \cite{miot}  in the mixed flow case.
A proof based on the idea of relating gradient flows and $\Gamma$-convergence was also given in \cite{ss}, it was the initial motivation for  the abstract scheme of ``$\Gamma$-convergence of gradient flows"  introduced there. 
Generalizations to the case with gauge, pinning terms and applied electric field terms were also studied  \cite{sp1,sp2,ks0,tice,stice}.

All these results  hold for  well-prepared data and  for as long as the points evolving under the dynamical law \eqref{limW} do not collide. 
In the parabolic case, Bethuel-Orlandi-Smets showed in the series of papers \cite{bos1,bos2,bos3} how to lift the well-prepared condition and  handle the difficult issue of collisions and extend the dynamical law \eqref{limW} beyond them.  Results of a similar nature were also obtained in \cite{s2}.

The expected limiting dynamics of  three-dimensional vortex lines under \eqref{gp3} is the binormal flow of a curve, but in contrast to the two-dimensional case there are only partial results towards establishing this rigorously  \cite{jerrard3d}.

\medskip

When the number of points $\N$ blows up as $\ep \to 0$, then it is expected that the limiting system of ODEs \eqref{limW} should be replaced by a mean-field evolution for the density of vortices, or vorticity.
More precisely, 
for a family of functions $u_\ep$, one introduces the supercurrent  $j_\ep$ and the vorticity (or Jacobian) $\mu_\ep$  of the map $u_\ep$ which are defined via 
\begin{equation}\label{defj}
j_\ep:= \la iu_\ep, \nab u_\ep\ra  \qquad \mu_\ep:=\curl j_\ep,
\end{equation}where $\la x,y\ra$ stands for the scalar product in $\mathbb{C}$ as identified with $\R^2$  via $\la x,y\ra = \hal (\bar x y+ \bar y x).$
The vorticity  $\mu_\ep$ plays the same role as the vorticity in classical fluids, the only  difference being that it is essentially quantized at the $\ep$ level,  as can be seen from the asymptotic estimate  $\mu_\ep\simeq 2\pi \sum_i d_i \delta_{a_i}$ as $\ep \to 0$, with $\{a_i\}$ the vortices of $u_\ep$ and $d_i\in \mathbb{Z}$ their degrees (these are the so-called Jacobian estimates, which we will recall in the course of the paper).

The mean-field evolution for $\mu=\lim_{\ep \to 0} \mu_\ep/\N$ can be guessed to be the mean-field limit of \eqref{limW} as $N \to \infty$. Proving this essentially amounts to showing that the limits $\ep \to 0$ and $N\to \infty$ can be interchanged, which is a delicate question.

In the case of the Gross-Pitaevskii equation \eqref{gp}-\eqref{gp3}, it is well-known that the Madelung transform  formally yields that the limiting evolution equation should be the incompressible Euler equation (for this and related questions,  see for instance the survey \cite{danchin}).
In the case of the parabolic Ginzburg-Landau equation, it was proposed, based on heuristic considerations by Chapman-Rubinstein-Schatzman \cite{crs}  and E \cite{E2}, that the limiting equation should be 
\begin{equation}\label{crs}
\p_t \mu-    \div ( \mu\nab h )=0 \qquad h= -\Delta^{-1} \mu.
\end{equation} where $\mu$ is the limit of the vortex density, assumed to be nonnegative.  In fact, both papers   really derived the equation for possibly signed densities, \cite{crs} did it   for  the very similar  model with gauge in a bounded domain, in which case the coupling $h=-\Delta^{-1}\mu$ is replaced by $h=(-\Delta +I)^{-1}\mu$ (see also Section \ref{sec:gauge} below), and \cite{E2}  treated  both situations with and without gauge, also for signed densities, without discussing the domain boundary. 
 
 After this model was proposed, existence, weak notions and  properties of solutions to \eqref{crs}  were studied in \cite{lz,duzhang,sv} (see also \cite{mz} for some related models).  They depend greatly on the regularity of the initial data $\mu$. For $\mu$ a general probability measure, the product $\mu \nab h$ does not make sense, and a weak formulation \`a la Delort \cite{delort} must be used; also uniqueness of solutions can fail, although there is always existence of a unique solution which becomes instantaneously $L^\infty$.  
It also turns out that \eqref{crs} can be interpreted as   the gradient flow (as in \cite{otto,ags})  in the space of probability measures equipped with the $2$-Wasserstein distance,  of the energy  functional 
\begin{equation}\label{mfe}
\Phi(\mu)= \int |\nab h |^2\qquad  h = -\Delta^{-1}\mu,\end{equation}
which is also the mean field limit of the usual Ginzburg-Landau energy. 
The  equation \eqref{crs} was also studied with that point of view in \cite{as} in the bounded domain setting (where the possible entrance and exit of mass creates difficulties). 
This energy point of view  allows one  to envision a possible (and so far unsuccessful)  energetic proof of the convergence of \eqref{pgl}  based on the scheme of Gamma-convergence of gradient flows, as described in \cite{serfaty}.
\medskip

A proof of the  convergence of \eqref{gp}, \eqref{pgl} or \eqref{gp3} to these limiting equations (Euler or \eqref{crs}) has remained elusive for a long time.

The time-independent analogue of this result, i.e.~the convergence of solutions to the static Ginzburg-Landau equations to the time-independent version of \eqref{crs} -- a suitable weak formulation of $\mu \nab h=0$ -- in the regime $\N \gg 1$ (the notation means $\N \to +\infty$ as $\ep \to 0$), and without any assumption on the sign of the vorticity,  was obtained in \cite{ss4}. As is standard, due to its translation-invariance, the  stationary Ginzburg-Landau  equation can be rewritten as a conservation law, namely  that the so-called stress-energy tensor is divergence free. The method of the proof then 
consists in passing to  the limit in that relation,  taking advantage of a good control of the size of the set occupied by vortices. This approach 
seems to fail to extend to the dynamical setting for lack of extension of this good control.

  On the other hand, the proof of convergence of the dynamic equations in  the case of bounded number of vortices is usually  based on examining the expression for the  time-derivative of the energy density and identifying limits for each term (for a  quick description, one may refer to the introduction of \cite{stice}).
This proof also seems very hard to extend to the situation of $\N \gg 1$ for the following reasons~:
\begin{itemize}\item It relies on  estimates on the evolution of the ``energy-excess" $D$, where terms controlled  in $\sqrt{D}$ instead of $D$ arise (and these are not amenable to the use of Gronwall's lemma).
\item Understanding the evolution of the energy density or excess seems to require controlling the speed of each individual vortex, which is difficult when their number gets large while  only averaged information is available.
\item The proof works until the first time of possible collision between vortices, which can in principle  occur in very short time once the number of vortices blow up, so it seems that one needs good control and understanding of the vortices' mutual distances. 
\end{itemize}

Recently Spirn-Kurzke \cite{ks} and Jerrard-Spirn \cite{js} were however able to make the first breakthrough by  accomplishing this program  for \eqref{pgl} and \eqref{gp} respectively, in the case where $\N$ grows very slowly: $\N \ll (\log \log \lep)^{1/4}$ in the parabolic Ginzburg-Landau case, and $\N \ll (\log \lep)^{1/2}$ in the two-dimensional  Gross-Pitaevskii case, assuming some  specific well-preparedness conditions on the initial data.  Relying on their previous work \cite{ks0,jsp1},  they showed that the method of proof for finite  number  of vortices can be made more quantitative and  pushed beyond bounded $\N$, controlling the vortex distances and proving  that their positions remain close to those of the  $\N$ points solving the ODE system \eqref{limW}, and then finally passing to the limit for that system by applying classical  ``point vortex methods", in the manner of, say, Schochet \cite{scho}. 
There is however  very little hope for extending such an approach  to larger values of $\N$. 
\smallskip

By a different proof method based on the evolution of a ``modulated energy", we will establish 
\begin{itemize}
\item the mean-field limit evolution of \eqref{gp}--\eqref{gp3}  in the regime $\lep \ll \N \ll 1/\ep$ 
\item the mean-field limit evolution of  \eqref{pgl} in the regime $1\ll \N \le O( \lep)$ 
\end{itemize} 
both at the level of convergence of the supercurrents, and not just the vorticity. We note that the condition  $\N \le O(\lep)$ allows to reach a physically very relevant regime: in the case of the equation with gauge, $\lep$ is the order of the number of vortices that are present just after one reached the so-called first critical field $\hci$, itself of the order of $\lep$ (cf. \cite{livre}).
 
Our method relies on the assumed regularity of the solution to the limiting equation, thus is restricted to limiting vorticities which are sufficiently regular. In contrast, although they concern very dilute limits, the results of \cite{ks,js} allow for general (possibly irregular) limiting vorticities.

\subsection{Our setting and results}

\subsubsection{Scaling of the equation}
In order to obtain a nontrivial limiting evolution,  the appropriate scaling of the equation  to consider is  
\begin{equation}\label{eq}
\left\{\begin{array}{ll}
\N(\frac{\a}{\lep}+ i \b) \p_t u = \Delta u + \frac{1}{\ep^2}u (1-|u|^2)& \quad \text{in} \ \R^2\\ [2mm]
u(\cdot, 0)= u^0.\end{array}\right.
\end{equation} 
Here we have put both the 2D Gross-Pitaevskii and parabolic equations in the same framework.  To obtain Gross-Pitaevskii, one should set $\a=0$ and $\b=1$ and to obtain the  parabolic Ginzburg-Landau equation, one should set $\a=1 $ and $\b=0$. We will also comment later on the  mixed case where one would have $\a$ and $\b$ nonzero, and say, $\a^2+\b^2=1$, and we will describe the adaptation to the 3D Gross-Pitaevskii case as we go. In all the paper, when we write $\R^n$, we mean the whole Euclidean space with $n$ being either $2 $ or $3$.

\subsubsection{Limiting equation}

Throughout the paper, for vector fields $X$ in the plane,  we use the notation
$$X^\perp= (-X_2, X_1) \qquad \np=(-\p_2, \p_1)  $$
and this way $$\curl X= \p_1 X_2 -\p_2 X_1= - \div X^\perp.$$
In the 2D Gross-Pitaevskii case, or in the parabolic case  with $\N\ll \lep$, then the limiting equation  will be the incompressible equation
\begin{equation}\label{lime}
\boxed{\left\{
\begin{array}{ll}\p_t \v = 2\b \v^\perp \curl \v - 2\a \v \curl \v  +\nab \pp& \quad \text{in} \ \R^2\\ [2mm]
 \div \v =0 & \quad \text{in} \ \R^2,\end{array}\right.}\end{equation} with $\pp$ the pressure associated to the divergence-free condition.
 In the parabolic case with  $\N $  comparable to $ \lep$, then without loss of generality, we may assume that 
\begin{equation}\label{defl}
\lambda:= \lim_{\ep \to 0} \frac{\lep}{\N} \end{equation}exists and is positive and finite. 
In that case, the limiting equation will be
\begin{equation}
\label{limg}
\boxed{\p_t \v = \lambda \nab \div \v - 2 \v \curl \v    \quad \text{in} \  \R^2.}\end{equation}
This is a particular case of the mixed flow equation
\begin{equation}
\label{limgm}
\p_t \v = \frac{\lambda}{\a } \nab \div \v+  2\b \v^\perp \curl \v - 2\a \v \curl \v   \quad \text{in} \  \R^2.\end{equation}
The incompressible Euler equation is typically written as
\begin{equation}\label{ince}
\left\{
\begin{array}{ll}\p_t \v +\v \cdot \nab \v +\nab \pp=0
   & \quad \text{in} \ \R^n\\ [2mm]
 \div \v =0 & \quad \text{in} \ \R^n,\end{array}\right.\end{equation} where $\pp$ is the pressure.
 But when $\div \v =0$ and $n=2$, one has the identity 
 \begin{equation}\label{vdivv}\v\cdot \nab \v = \div (\v \otimes \v) = \v^\perp\curl \v + \hal \nab |\v|^2,\end{equation}
 so when $\a=0$, \eqref{lime}  is exactly  the 2D incompressible  Euler equation (up to a time rescaling by a factor of 2), with the new pressure equal to the old one plus $ |\v|^2$.  Existence, uniqueness, and regularity of its solutions  are  well-known since Volibner and  Yudovich (one can refer to textbooks such as \cite{bertom,chemin} and references therein). 
 
 In the three-dimensional Gross-Pitaevskii case, our limiting equation will be the time-rescaled incompressible Euler equation rewritten again as  \begin{equation}\label{incel}
\boxed{
\left\{
\begin{array}{ll}\p_t \v = 2 \div \( \v \otimes \v - \hal |\v|^2 I\) +\nab \pp& \quad \text{in} \ \R^3\\ [2mm]
 \div \v =0 & \quad \text{in} \ \R^3.\end{array}
 \right.
 }\end{equation}

Taking the curl of \eqref{lime}, one obtains 
 $$\p_t \omega= 2 \div (( \a \v^\perp +\b \v)  \omega ) \qquad \omega = \curl \v , \  \div \v =0, $$ 
 so in the case $\a=0$, $\b=1$, we of course retrieve  the vorticity form of the Euler equation, and in the case $\a =1$,  $\b=0$ and $\N \ll \lep$, we see that  we retrieve  the Chapman-Rubinstein-Schatzman-E equation  \eqref{crs}. 
 But in the case $0<\lambda <+\infty$ the curl of \eqref{limg}  is {\it not} \eqref{crs}.  In fact the divergence of $\v$ is not zero, even if we assume it vanishes  at initial time, and this affects the dynamics of the vortices. This phenomenon can be seen as a drift on the vortices created by the phase of $u$, a ``phase-vortex interaction", as  was also observed in \cite{bos1}.
 
 The existence and uniqueness of global regular  solutions   to \eqref{lime} can be obtained exactly as in the case of the Euler equation, see for example \cite{chemin}.  It suffices to write the equation in vorticity  form  as
$\p_t \omega= A(\omega) \cdot \nab \omega$ where $A(\omega)$ is a Fourier operator of order $-1$ (we do not give details here). In contrast, 
 the equation \eqref{limg} is new in the literature, and for the sake of completeness we present in an appendix (cf.~Theorem \ref{th-exi}) a result of local existence and uniqueness of $C^{1,\gamma}$ solutions to the general equation \eqref{limgm}, which suffices for our purposes. More results, including global  existence  of such regular solutions are   established in   \cite{du}.

 We note that the transition from one limiting equation to the other  happens  in the parabolic case, in the regime where $\N$ is proportional to $\lep$. This is a natural regime, which corresponds to the situation where the number of points is of the same order as the ``self-interaction" energy of each vortex.\subsubsection{Method of proof}
Our method of proof bypasses  issues such as taking  limits in the energy evolution or vorticity evolution relations and  controlling vortex distances. Instead, it takes advantage of the regularity and stability  of the solution to the limiting equation. More precisely, we introduce what can be called
a ``modulated Ginzburg-Landau energy"
\begin{equation}\label{defE}
\E(u,t)  = \hal \int_{\R^n} |\nab u -iu \N \v(t)|^2 + \frac{(1-|u|^2)^2}{2\ep^2} +\N^2 (1-|u|^2) \psi(t) ,\end{equation}
where  $\v(t)$ is a regular solution to the desired limiting equation, and 
\begin{equation}\label{defpsi}
\psi(t)= \begin{cases} \pp(t)-|\v(t)|^2 & \text{in the Gross-Pitaevskii case} \\
 -|\v(t)|^2 &  \text{in  the parabolic case} .\end{cases}
\end{equation}
This quantity is modelled on the Ginzburg-Landau energy, and controls the $L^2$ distance between the supercurrent $j_\ep = \la iu_\ep, \nab u_\ep\ra$  normalized by $\N$ and the  limiting  velocity field $\v$, because
in the regimes we consider, the term $\int_{\R^n}\N^2 (1-|u|^2)\psi(t)$ is a small perturbation (that term will however play a role in algebraic cancellations).  

One of the difficulties in the proof is that the convergence of $j_\ep/\N$ to $\v$ is not strong in $L^2$, in general, but rather weak in $L^2$, due to a concentration of  an amount  $\pi \lep$ of  energy at each of the vortex points (this energy concentration can be seen as  a defect measure in the convergence of $j_\ep/\N$ to $\v$).  In order to take that  concentration into account, we need to subtract off of $\E$ the constant quantity $\pi \N \lep$. In the regime where $\N \gg \lep$, then $\pi \N \lep=o(\N^2)$ and this quantity  (or the concentration) happens to  become negligible, which is what will  make the proof in the Gross-Pitaevskii case much simpler and applicable to the three-dimensional setting as well, but of course restricted to that regime. 
 
The main point of the proof consists in differentiating $\E$ in time, and showing that 
$\frac{d}{dt} \E(u_\ep(t),t) \le C (\E(u_\ep(t), t)-\pi \N \lep)$, which allows us to apply Gronwall's lemma, and conclude that if $\E-\pi \N \lep$ is initially small, it remains so.  The  difficulty for this is to show that a control  in $C(\E-\pi \N \lep)$ instead of $C \sqrt{\E}$, is possible, even though the terms involved initially appear to be of order $\sqrt{\E}$. This is made possible by a series of algebraic simplifications in the computations  that reveal only quadratic terms.  An important insight leading to these simplifications  is that one should work, by analogy with the gauged Ginzburg-Landau model, with gauged quantities, where $\N \v$ plays the role of a space gauge vector-field, and $\N \phi$ plays the role of a temporal gauge, $\phi$ being defined by 
  \begin{equation}\label{defphi}
\phi(t)= \begin{cases} \pp(t) & \text{in cases leading to } \ \eqref{lime}, \eqref{incel}\\
{\lambda} \div \v(t) &  \text{in cases leading to } \ \eqref{limg}.\end{cases}
\end{equation}

The idea of  proving convergence via a Gronwall argument on the  modulated energy, while  assuming and using the regularity of the limiting solution is similar to the relative entropy method for establishing  (the stability of) hydrodynamic limits,  first introduced in   \cite{yau} and used for quantum many body problems, mean-field theory 
and semiclassical limits, one example of the latter arising precisely  for the limit of the Gross-Pitaevskii equation in
 \cite{lz2}; or Brenier's modulated entropy method to establish   kinetic to fluid limits such as   the derivation of the Euler equation  from the Boltzmann or Vlasov  equations (see for instance \cite{laure} and references therein).

 In the point of view of that method, 
  $\int_{\R^n} |\nab u -iu \N \v|^2$ is the modulated energy, while  $\int_{\R^n} |\p_t u -iu \N \phi|^2$ is the modulated energy-dissipation.
\subsubsection{Well-posedness of the Cauchy problem}\label{wp}
The equations \eqref{gp} and \eqref{gp3} are shown in \cite{gerard,gallo} to be globally well-posed in the natural energy space  
\begin{equation}\label{nrjspace}
E=\{u \in H^1_{loc} (\R^n), \nab u \in L^2(\R^n), |u|^2-1\in L^2(\R^n)\}.\end{equation}
 This is the setting we will consider in dimension 3 and corresponds to solutions which have zero total degree at infinity. But in general this is too restrictive for our purposes: 
the problem is that, when working in the whole space,  the natural energy is infinite as soon as the total degree of $u$ at infinity is not zero. It thus needs to be renormalized by substracting off the energy of some fixed map $U_{D_\ep}$ which behaves at infinity like $u_\ep$, i.e.~typically like $e^{i\N\theta}$ for example.  To be more specific, in the two-dimensional case
we consider as in \cite{bs,miot}, for each integer $D$,  a reference map  $ U_D$, which is smooth in $\R^2$ and such that 
\begin{equation}\label{UN}
 U_D=\(\frac{z}{|z|}\)^D\quad \text{ outside of $B(0,1)$}.\end{equation}

The well-posedness of the Cauchy problem in that context was established in \cite{bs} in the Gross-Pitaevskii case and \cite{miot} in the mixed flow (hence possibly parabolic) case : they show  that given  $u_\ep^0\in U_{D_\ep} +H^1(\R^2)$ (in fact they even consider a slightly wider class than this), there exists a unique global solution to \eqref{eq} such that $u_\ep(t)-U_{D_\ep}\in C^0(\R, H^1(\R^2))$, and satisfying  other 
properties that will be listed in Section \ref{sec:refu}. This is the set-up that we will use, as is also done in 
 \cite{jsp2}. Without loss of generality, we may assume that $D_\ep \ge 0$. 
In the Gross-Pitaevskii case, we will allow for $D_\ep$ (the total degree) to be possibly different from $\N$ (the total number of vortices), which corresponds to a vorticity which does not have a distinguished sign. For simplicity, we will then assume that  $D_\ep/\N = d\le 1$, a number independent of $\ep$.
In the parabolic case, we need to have a distinguished sign and we will  take $D_\ep=\N$.

Let us point out  that the use of the modulated energy will simplify the proofs in that respect, in the sense that it naturally provides a finite energy and thus replaces having to ``renormalize" the infinite energy  as in \cite{bjs,miot,jsp2}.

\subsubsection{Main results}
We may now state our main results, starting with the  Gross-Pitaevskii case. In all the paper, we  denote by $C^\gamma$ the space of functions which are bounded and H\"older continuous with exponent $\gamma$, and by $C^{1, \gamma}$ the space of functions which are bounded and whose derivative is bounded and $C^\gamma$.   We use the standard notation $a\ll b$ to mean $\lim a/b=0$, and the standard $o$ notation, all asymptotics being taken as $\ep \to 0$. 

\begin{theo}[Gross-Pitaevskii case] \label{th1}Assume $\N$ satisfies 
\begin{equation}\label{condN1}
\lep \ll \N \ll \frac{1}{\ep}\quad \text{as} \ \ep \to 0.\end{equation} Let $\{u_\ep\}_{\ep>0}$ be solutions to 
\begin{equation}\label{gpr}
\left\{\begin{array}{ll}
i\N \p_t u_\ep = \Delta u_\ep + \frac{u_\ep}{\ep^2} (1-|u_\ep|^2)\quad \text{in} \ \R^n\\ [2mm]
u_\ep(\cdot, 0)=u_\ep^0.\end{array}\right.\end{equation}

If $n=2$ we assume  $u_\ep^0 \in U_{D_\ep}+ H^{2}(\R^2)$ where  we take $0\le D_\ep\le \N$ with $D_\ep/\N=d$, 
 $ U_{D_\ep}$ is as in \eqref{UN}, and  $\v$ is a solution to \eqref{lime}, such that 
 $\v(0) - d \la \nab U_1, i U_1\ra   \in L^2(\R^2)$,     $\v(t)\in L^\infty (\R_+ , C^{0,1}(\R^2))$, and $\curl \v(t)\in L^\infty(\R_+, L^1(\R^2))$.
\\
If $n=3$ we assume $u_\ep^0\in E$ as in \eqref{nrjspace} with $\Delta u_\ep^0\in L^2(\R^3)$, and $\v$ is a solution to \eqref{incel} such that  $ \v(t)\in L^\infty ([0,T] , C^{0,1}(\R^3)\cap L^2(\R^3))$, $\p_t \v \in L^\infty ([0,T] , L^\infty(\R^3)\cap L^2(\R^3))$ and  $\curl \v(t)\in L^\infty([0,T], L^1(\R^3))$.
 
  Letting $\E$ be defined from $\v(t)$ via \eqref{defE},   assume that  
  \begin{equation}\label{eu0}
  \E(u_\ep^0,0)\le o(\N^2).\end{equation} Then,  for every $t \ge 0$ (resp. $t\le T$), we have
  $\E(u_\ep(t), t) \le o(\N^2)$, and  in particular  we have 
\begin{equation}\frac{j_\ep}{\N}:= \frac{\la \nab u_\ep, i u_\ep\ra}{\N} \to  \v \quad  \text{strongly in} \  L^1_{{loc}}(\R^n). \end{equation} 
If we know in addition that $u_\ep $ is bounded in $L^\infty_{{loc}}(\R_+, L^\infty(\R^n))$ uniformly in $\ep$, then the convergence is strong in $L^2(\R^n)$.
\end{theo}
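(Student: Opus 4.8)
The plan is to run a Gronwall argument on the modulated energy $\E(u_\ep(t),t)$ from \eqref{defE}. I would first recast everything in the gauged language of the introduction: set $A=\N\v$, write the covariant gradient $D_\v u := \nab u - i\N\v\, u$ and the covariant time derivative $\p_t^\phi u := \p_t u - i\N\phi\, u$ with $\phi$ as in \eqref{defphi} (here $\phi=\pp$), so that $\hal\int |D_\v u|^2$ is the leading part of $\E$ and $\int |\p_t^\phi u|^2$ is the associated modulated energy-dissipation. Since $\lep \ll \N$, one has $\pi\N\lep = o(\N^2)$, so the vortex-energy concentration is negligible in this regime and the goal reduces to the differential inequality $\frac{d}{dt}\E \le C\,\E + o(\N^2)$, with $C$ governed by $\|\v\|_{L^\infty(C^{0,1})}$ (and $\|\p_t\v\|_{L^\infty}$ when $n=3$).

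Next I would differentiate $\E$ in time, term by term. Differentiating $\hal\int |D_\v u|^2$ gives $\int \la D_\v(\p_t u), D_\v u\ra - \N\int \la i(\p_t\v)\,u, D_\v u\ra$; in the first integral I substitute \eqref{gpr} rewritten as $i\N\,\p_t^\phi u = \Delta u + \N^2\phi\, u + \frac{u}{\ep^2}(1-|u|^2)$, and in the second I substitute the limiting equation \eqref{lime} (resp.\ \eqref{incel}) for $\p_t\v$. After integrating by parts, the contributions of the potential and of its time derivative combine with the $\psi$-term of $\E$ — which is exactly why $\psi$ in \eqref{defpsi} differs from $\phi$ precisely by $|\v|^2$ — while the pressure and transport terms produced by $\nab\pp$ and $\v^\perp\curl\v$ cancel against the gauge terms generated by the choice $\phi=\pp$. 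The decisive claim, and the \emph{main obstacle}, is that these cancellations annihilate every term that is only linear in $D_\v u$ (hence of size $O(\sqrt{\E})$ and not amenable to Gronwall), leaving a genuinely quadratic remainder; I expect it to organize into the modulated stress-energy tensor contracted with $\nab\v$, bounded by $\|\nab\v\|_{L^\infty}\int |D_\v u|^2$.

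I would then dispose of the leftover error terms — those carrying explicit powers of $\ep$ or pairing $(1-|u_\ep|^2)$ with $\p_t\v$ or $\nab\v$, as well as the $\psi$-correction itself. Using $\|1-|u_\ep|^2\|_{L^2}^2 \le 2\ep^2\,\E$, the bound $\N^2\ep^2 = (\N\ep)^2 = o(1)$ forced by $\N \ll 1/\ep$ in \eqref{condN1}, and Young's inequality, each of these is absorbed into $\hal C\,\E$ plus an $o(\N^2)$ remainder (this is also how one checks a posteriori that the $\psi$-term keeps $\E$ comparable to its clean positive part). This produces $\frac{d}{dt}\E(u_\ep(t),t) \le C\,\E(u_\ep(t),t) + o(\N^2)$, and Gronwall's lemma together with the well-prepared assumption \eqref{eu0} yields $\E(u_\ep(t),t) \le \big(\E(u_\ep^0,0)+o(\N^2)\big)e^{Ct} = o(\N^2)$ for each fixed $t$ (up to $T$ in the case $n=3$).

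Finally I would convert the smallness of $\E$ into the stated convergence. Pointwise, $\frac{j_\ep}{\N}-|u_\ep|^2\v = \frac1\N \la iu_\ep, D_\v u_\ep\ra$, so Cauchy-Schwarz against $\hal\int |D_\v u_\ep|^2 \le 2\E + o(\N^2) = o(\N^2)$ gives $\big\|\frac{j_\ep}{\N}-|u_\ep|^2\v\big\|_{L^1(K)} \to 0$ on any bounded $K$; and since $\|1-|u_\ep|^2\|_{L^2}\le \ep\sqrt{2\E} = o(\ep\N) = o(1)$ while $\v \in L^\infty$, the term $(|u_\ep|^2-1)\v$ also tends to $0$, whence $j_\ep/\N \to \v$ in $L^1_{loc}(\R^n)$. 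If moreover $u_\ep$ is bounded in $L^\infty_{loc}$ uniformly in $\ep$, the factor $|u_\ep|$ is controlled globally and the same Cauchy-Schwarz estimate upgrades the first term to $o(1)$ in $L^2(\R^n)$, giving the claimed strong $L^2$ convergence.
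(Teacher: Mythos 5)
Your proposal follows essentially the same route as the paper: a Gronwall argument on the modulated energy in which the limiting equation and the modulated stress-energy tensor identity are combined so that all terms linear in $\nab u_\ep - iu_\ep\N\v$ cancel, leaving $\int \T:\nab\v$ plus $o(\N^2)$ errors controlled via $\N\ep\to 0$, with the choice $\psi=\pp-|\v|^2$ cancelling the $\p_t(1-|u_\ep|^2)$ contributions exactly as you describe. The only points the paper spends extra care on, which you elide, are the coercivity correction $\ep^2\N^4\|\psi\|_{L^2}^2$ when controlling $(1-|u_\ep|^2)$ by $\E$, and the cut-off $\chi_k\to 1$ needed to justify the integrations by parts on the whole space; neither changes the structure of the argument.
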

The restriction $\N \gg \lep$ is a technical obstruction caused by  the difficulty in controlling the velocity of the individual vortices because of the lack of control of $\int_{\R^n} |\p_t u_\ep|^2$. On the other hand, the restriction $\N \ll \frac{1}{\ep}$ seems quite natural, since when $\N$ is larger, the modulus of $u$ should enter the limiting equation, giving rise to compressible Euler equations.  On that aspect we refer to the survey \cite{bdgs} and results quoted therein. 
\smallskip 

In the parabolic  case, we have the following result 
\begin{theo}[Parabolic  case]\label{th2}
Assume $\N$ satisfies 
\begin{equation}\label{condN2}
1\ll  \N \le O( \lep) \quad \text{as} \ \ep \to 0,\end{equation} and let $\lambda$ be as in \eqref{defl}. \\
Let $\{u_\ep\}_{\ep>0}$ be solutions to \eqref{eq} with $\b=0$ and $\a=1$, associated to initial conditions $u_\ep^0 \in U_{\N}+ H^1(\R^2)$ where $ U_{\N}$ is as in \eqref{UN}. Assume $\v$ is a solution to \eqref{lime}  if $\N \ll \lep$, and to \eqref{limg} otherwise,  such that 
 $\v(0) - \la \nab U_1, i  U_1\ra\in L^2(\R^2)$, $\curl \v(0)\ge 0$,  and belonging to   $L^\infty ([0,\t] , C^{1, \gamma}(\R^2))$ for some $\gamma>0$ and some $\t >0$ (possibly infinite).   
  Letting $\E$ be defined from $\v(t)$ via \eqref{defE},   assume that  
  \begin{equation}\label{eu02}
  \E(u_\ep^0,0)\le \pi \N \lep+ o(\N^2).\end{equation} Then,  for every $t \in [0,\t]$,  we have $\E(u_\ep(t), t) \le \pi \N \lep + o(\N^2)$ and
\begin{multline}\label{123b}\text{ 
 $\D \frac{\nab u_\ep-iu_\ep\N \v}{\N}  \to 0 $   strongly in $L^p_{loc}(\R^2)$  for $p<2$,} \\ \text{ and weakly in  $L^2(\R^2) $ 
  if in addition $ \lambda<\infty,$}\end{multline}
   \begin{equation}\label{123c}
   \text{$\D \frac{j_\ep}{\N} : = \frac{\la \nab u_\ep, iu_\ep\ra}{\N} \to \v $ strongly in $  L^1_{loc}(\R^2).$}\end{equation}
  If we know in addition that $u_\ep $ is bounded in $L^\infty_{{loc}}(\R_+, L^\infty(\R^2))$ uniformly in $\ep$, then the convergence of $j_\ep/ \N$ is in the same sense as in \eqref{123b}.\end{theo}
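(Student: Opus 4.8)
The plan is to run a Gronwall argument on the modulated energy $\E(u_\ep(t),t)$ of \eqref{defE}, exploiting that $\v$ is a \emph{regular} solution of the limiting equation. Concretely, the goal is the differential inequality
\[
\frac{d}{dt}\E(u_\ep(t),t)\le C\(\E(u_\ep(t),t)-\pi\N\lep\),
\]
where $C$ depends only on $\|\v\|_{C^{1,\gamma}}$ (resp. $\|\v\|_{C^{0,1}}$ in the Gross--Pitaevskii case). Since $\pi\N\lep$ is independent of $t$, and since $\E(u_\ep^0,0)-\pi\N\lep\le o(\N^2)$ by the well-preparedness hypothesis \eqref{eu02} (resp. \eqref{eu0}), integrating this inequality yields $\E(u_\ep(t),t)-\pi\N\lep\le o(\N^2)$ on $[0,\t]$, which is the quantitative estimate from which both convergence statements will be extracted.

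First I would differentiate $\E$ in time. The derivative splits into a contribution from the evolution of $u_\ep$, in which $\p_t u_\ep$ is eliminated using \eqref{eq} (i.e.\ replaced by $\frac{1}{\N(\a/\lep+i\b)}(\Delta u_\ep+\frac{u_\ep}{\ep^2}(1-|u_\ep|^2))$), and a contribution from the explicit time-dependence of $\v(t)$ and $\psi(t)$. In the first contribution I would integrate by parts to expose the Ginzburg--Landau stress-energy tensor
\[
S_{kl}=\la\p_k u_\ep,\p_l u_\ep\ra-\hal\delta_{kl}\(|\nab u_\ep|^2+\frac{(1-|u_\ep|^2)^2}{2\ep^2}\),
\]
paired against $\nab\v$. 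Following the gauge analogy stressed in the introduction, I would organize every term covariantly, treating $\N\v$ as a magnetic potential and $\N\phi$ (with $\phi$ as in \eqref{defphi}) as a temporal gauge, so that the modulated quantities $\nab u_\ep-i\N\v u_\ep$ and $\p_t u_\ep-i\N\phi u_\ep$ appear; in the parabolic case the latter produces a dissipative term $-\frac{\N}{\lep}\int|\p_t u_\ep-i\N\phi u_\ep|^2$ of favorable sign.

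The heart of the argument, and the step I expect to be the main obstacle, is the \emph{exact quadratic cancellation}. A naive Cauchy--Schwarz estimate of the cross terms above gives only a bound of the form $C\sqrt{\E}$, which is useless for Gronwall; the point is that all terms that are \emph{linear} in $\nab u_\ep-i\N\v u_\ep$ (hence of apparent size $\sqrt{\E}$) must cancel identically. This is precisely where both the limiting equation and the tailored choice of $\psi$ and $\phi$ are used: the stress-energy term paired with $\nab\v$, the explicit $\p_t\v$ term (rewritten through \eqref{limg} or \eqref{lime}), and the derivatives of the correction $\N^2(1-|u_\ep|^2)\psi$ recombine so that their part linear in the modulation is exactly the residual of the limiting PDE for $\v$ (which vanishes) tested against the normalized modulated current $\frac{j_\ep}{\N}-\v$, and therefore cancels. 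The temporal gauge $\phi=\pp$ (resp. $\lambda\div\v$) absorbs the pressure/divergence constraint, while the term $-|\v|^2$ in $\psi$ together with the identity \eqref{vdivv} absorbs the remaining compressible corrections. What survives is genuinely quadratic, bounded by $C\|\nab\v\|_\infty\int|\nab u_\ep-i\N\v u_\ep|^2$ plus lower-order terms; the perturbation $\N^2\int(1-|u_\ep|^2)\psi$ is shown to be $o(\N^2)$ using $\N\ll1/\ep$, so that everything is dominated by $\E-\pi\N\lep$, closing the inequality.

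Finally I would pass from the energy bound to the stated convergences. The standard vortex-ball/Jacobian lower bound $\hal\int|\nab u_\ep|^2+\frac{(1-|u_\ep|^2)^2}{2\ep^2}\ge\pi\N\lep(1-o(1))$ shows that $\frac{1}{\N^2}\int|\nab u_\ep-i\N\v u_\ep|^2\le\frac{1}{\N^2}(\E-\pi\N\lep)+o(1)$. In the Gross--Pitaevskii regime $\N\gg\lep$ the concentration $\pi\N\lep=o(\N^2)$ is negligible, hence $\frac{\nab u_\ep-i\N\v u_\ep}{\N}\to0$ strongly in $L^2$ and $j_\ep/\N\to\v$; the three-dimensional case follows the identical scheme with the tensor identity behind \eqref{incel} replacing \eqref{vdivv}. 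In the parabolic regime $\N\le O(\lep)$ the concentration is comparable to $\N^2$ and persists as a defect measure carried by the vortices, so one obtains only weak $L^2$ (and, because the defect lives on vanishingly small cores, strong $L^p_{loc}$ for $p<2$) convergence of the modulated gradient as in \eqref{123b}, while $j_\ep/\N\to\v$ strongly in $L^1_{loc}$ after using $|u_\ep|\to1$ off the cores, giving \eqref{123c}.
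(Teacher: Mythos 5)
Your overall strategy (Gronwall on the modulated energy, the gauge-covariant organization with $\N\v$ and $\N\phi$, the choice of $\psi$, and the final extraction of the convergences) matches the paper's. But in the parabolic regime there are two genuine gaps at the heart of the argument, both located at the step where you assert that what survives is quadratic and "dominated by $\E-\pi\N\lep$". First, not every term linear in the modulated quantities cancels algebraically: after inserting the limiting equation and the divergence of the modulated stress tensor, there remains the velocity term $I_V=-\int_0^t\i \N\,\tV\cdot\v$ with $\tV=2\la i(\p_t u_\ep-iu_\ep\N\phi),\nab u_\ep-iu_\ep\N\v\ra$. A Cauchy--Schwarz bound on it costs $\sqrt{\E}\,\|\p_t u_\ep-iu_\ep\N\phi\|_{L^2}$, which cannot be absorbed by the dissipation. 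The paper handles it with an $\ep$-quantitative version of the product estimate (Proposition \ref{estprod}, proved by a space--time ball construction in Appendix \ref{app1}), which converts $\int\tV\cdot\v$ into $\frac{1}{\lep}\bigl(\frac1\Lambda\int|\p_t u_\ep-iu_\ep\N\phi|^2+\Lambda\int|(\nab u_\ep-iu_\ep\N\v)\cdot\v|^2\bigr)$ plus errors; the first piece is absorbed by the dissipative term and the second recombines with the vorticity term $I_E$. Applying it also requires the a priori bound $\int_0^{\te}\i|\p_t u_\ep|^2\le C\N^3\lep^3$ of Lemma \ref{aprioridtu}, which your sketch does not supply.

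Second, the bound $C\|\nab\v\|_{L^\infty}\int|\nab u_\ep-iu_\ep\N\v|^2$ on the stress-tensor term is \emph{not} controlled by the excess: in the regime $\N\le O(\lep)$ the full modulated Dirichlet energy is of order $\pi\N\lep$, which dwarfs $\E-\pi\N\lep=o(\N^2)$. The paper's fix is to pair $\T$ with $\nab\bar\v^\perp$, where $\bar\v$ is an approximation of $\v$ that is \emph{constant on each vortex ball} (Lemma \ref{lembl}), so that the integral only sees the energy outside the balls, which the ball-construction lower bound of Proposition \ref{lem43} identifies with the excess. Likewise, the quadratic term $\frac{\N}{\lep}\int|\nab u_\ep-iu_\ep\N\v|^2|\v|^2$ coming from the product estimate is only controlled after subtracting $\int|\v|^2\tilde\mu_\ep$ via the localized lower bound \eqref{exces2}, and it is exactly there that the restriction $\N\le O(\lep)$ enters. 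Without the ball construction, the Jacobian estimates, and these two recombinations, the Gronwall inequality does not close; your proposal as written would prove the Gross--Pitaevskii case (where $\pi\N\lep=o(\N^2)$ makes the concentration negligible) but not Theorem \ref{th2}.
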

We note that in both theorems, we obtain the convergence of the solutions of \eqref{eq} at the level of their supercurrents $j_\ep$, which is obviously stronger than  the convergence of the vorticity $\mu_\ep/\N = \curl j_\ep /\N$ to $\curl \v$, which it implies. 

The additional condition on the uniform boundedness of $u_\ep$ is easy to verify in the parabolic case: 
for example if the initial data satisfies  $|u_\ep^0|\le 1$, then this is preserved along the flow of  \eqref{pgl} by the maximum principle.

The conditions  \eqref{eu0}   and \eqref{eu02} are well-preparedness conditions. It is fairly standard to check that one can build configurations $u_\ep^0$ that satisfy them, for example proceeding as in \cite[Section 7.3]{livre}.  

In the parabolic case, for $u_\ep^0\in U_{\N}+H^1(\R^2)$ and \eqref{eu02} to hold,  the initial configuration should have most of its vortices of degree $1$, and  thus $\curl \v(0)$ must be nonnegative (it is automatically of mass $2\pi$ by the condition $\v(0) - \la \nab U_1, i  U_1\ra\in L^2(\R^2)$ so the assumption $\curl \v(0)\ge 0$ is redundant). 
 We take advantage of these well-preparedness conditions as well as of the regularity of the solutions to the limiting equations in crucial ways. Since regularity propagates in time in these limiting equations, then the regularity assumptions  we have placed really amount to just another assumption on the initial data. 
  It is of course  significantly more challenging and an open problem to prove convergence without such assumptions, in particular without knowing in the parabolic case that the initial limiting vorticity has a sign. 

The reason for the restriction  $\N \le O(\lep)$  will become clear in the course of the proof:  the factor of growth of the modulated energy in Gronwall's lemma is bounded by $C\N/\lep$ and thus becomes too large  otherwise.  We are not even sure whether the formal analogue of \eqref{limg}, i.e. the equation with $\lambda=0$ (shown to be locally well-posed in \cite{du}),  is  the correct limiting equation.


\subsection{Other settings}
\subsubsection{The mixed flow case}
With our method of proof, we can prove that if $\a>0$, $\b>0$, and $\a^2+\b^2=1$, the same results as Theorem \ref{th2} hold, with convergence to the limiting equation \eqref{lime}, respectively \eqref{limgm}, under the additional condition $\N \gg \llep$, cf. Remark \ref{remark410}. For a   sketch  of the proof and quantities to use, one can refer to Appendix C, setting the gauge fields to~$0$. 

\subsubsection{The torus}

We have chosen to work in the whole plane or space, but one can easily check that the proof works with no change in the case of a torus. The proof  is of course  easier since there is no need for controlling infinity, and there are no boundary terms in integrations by parts.  However, this gives rise to a   nontrivial situation  only in the Gross-Pitaevskii case, since in the parabolic case we have to assume that the vorticity has a  distinguished sign, while at the same time its integral over the torus vanishes. In the parabolic case, to have a  nontrivial situation one should instead  consider  the case with gauge as described just below  in Section \ref{sec:gauge}, where the total vorticity over a period does not have to vanish.

\subsubsection{Bounded domains}
On the contrary, working in a bounded domain entails significant difficulty in the parabolic  case : one basically needs to control the  change in energy due to the entrance and exit of vortices (one can see the occurence of this difficulty at the limiting equation level in \cite{as}), and the necessary tools are  not yet available. 
  
\subsubsection{The case with gauge}\label{sec:gauge}
Our proof adapts well to the cause with gauge, which is the true physical model for superconductors, again in the setting of the infinite plane or the torus.  The  corresponding evolution equations are then the so-called Gorkov-Eliashberg equations 
  \begin{equation}\label{glg}
  \left\{\begin{array}{ll}
 & \N  \(\frac{\a}{\lep}+ i\b\) (\p_t u-iu\Phi)= (\nab_A)^2 u+\frac{u}{\ep^2}(1-|u|^2)  \quad \text{in} \ \R^2\\ [2mm]
&  \sigma (\p_t A-\nab \Phi)= \np \curl A + \la iu,\nab_A u\ra \quad \text{in} \ \R^2.
\end{array}\right.
\end{equation}
These were first derived by Schmid \cite{schmid} and Gorkov-Eliashberg \cite{ge}, and the mixed flow equation was also suggested as a good model for the classical Hall effect  by Dorsey \cite{dorsey} and Kopnin  et al. \cite{kik}.
In this system the unknown functions are $u$, the complex-valued order parameter, $A$ the gauge of the magnetic field (a real-valued vector field), and $\Phi$ the gauge of the electric field (a real-valued function). The notation $\nab_A$ denotes the covariant derivative $\nab -i A$.  The magnetic field in the sample is deduced from $A$ by $h=\curl A$, and the electric field by $ 
E= -\p_t A+\nab \Phi$. Finally, $\sigma\ge 0$ is a real parameter, the characteristic relaxation time of the magnetic field, which may be taken to be $0$. 
The dynamics of a finite number of vortices under such flows was established rigorously in \cite{sp1,sp2,ss,ks0,tice,stice}, in a manner analogous to that described in the case without gauge. A dynamical law for the limit of the vorticity was formally proposed in \cite{crs,E2}, the analogue of \eqref{crs} mentioned above. 

Natural physical quantities associated to this model are the gauge-invariant supercurrent 
  $$j_\ep = \la iu_\ep, \nab_{A_\ep} u_\ep\ra, $$
  the gauge-invariant vorticity 
  $$\mu_\ep= \curl (j_\ep+ A_\ep)$$
  and the electric field 
   $$E_\ep= - \p_t A_\ep +\nab \Phi_\ep.$$

In Appendix \ref{gauge} we explain how to adapt the computations made in the planar case without gauge to the case with gauge, in order to derive the following limiting equations: 
 if $\a=0$ or $\N \ll \lep$, 
\begin{equation}\label{le1}
\left\{\begin{array}{ll} & 
 \p_t \v - \mathrm{E}= (-2\a\v+2\b\v^\perp) (\curl \v+ \h) + \nab \pp  \\ [2mm]
& \div \v =0\\ [2mm]
& - \sigma \mathrm{E}= \v + \np \h \\ [2mm]
&\p_t \h = - \curl \mathrm{E},\end{array}\right.\end{equation}
 and if $\a\neq 0$ and $\lim_{\ep \to 0} \frac{\lep}{\N}=\lambda$ is positive and finite, 
\begin{equation}\label{le2}
\left\{\begin{array}{ll} &  \p_t \v - \mathrm{E}= (-2\a\v+2\b\v^\perp) (\curl \v+ \h) +\frac{\lambda}{\a} \nab \div \v \\ [2mm]
& - \sigma \mathrm{E}= \v + \np \h \\ [2mm]
&\p_t \h = - \curl \mathrm{E}.\end{array}\right.
\end{equation}
The corresponding results to Theorems \ref{th1} and \ref{th2} are then the convergences
\begin{equation}\label{cvg}
 \frac{j_\ep}{\N} \to \v, \quad \frac{\mu_\ep}{\N}\to \mu:=\curl \v + \h,\quad  \frac{\curl A_\ep}{\N} \to \h,  \quad \frac{E_\ep}{\N} \to \mathrm{E}\end{equation}
in the case $\a=0$ and $ \lep \ll \N \ll \frac{1}{\ep}$ to \eqref{le1}, and in the case $\b=0$ and $1\ll \N \le O(\lep)$ to \eqref{le1} or \eqref{le2} according to the situation. 

We note that if $\sigma \neq 0$,  \eqref{le1}, resp. \eqref{le2}, can be rewritten as a system of equations on only two unknowns $\v$ and $\h$~:
\begin{equation}\label{le11}
\left\{\begin{array}{ll} &  \p_t \v + \frac{1}{\sigma} (\v + \np \h) = (-2 \a\v+2\b \v^\perp)  (\curl \v+ \h) +\nab p \\ [2mm]
& \div \v =0\\ [2mm]
& \sigma \p_t \h =\curl \v + \Delta \h,  \end{array}\right.
\end{equation}
respectively
\begin{equation}\label{le11b}
\left\{\begin{array}{ll} &  \p_t \v + \frac{1}{\sigma} (\v + \np \h) = (-2 \a\v+2\b \v^\perp)  (\curl \v+ \h) + \frac{\lambda}{\a} \nab \div \v \\ [2mm]
&
 \sigma \p_t \h =  \curl \v + \Delta \h.  \end{array}\right.
\end{equation}
We can also note  that taking the curl of \eqref{le1} or \eqref{le2}  gives  (with $\mu$ the limiting vorticity  as in \eqref{cvg}) 
\begin{equation*}
\left\{\begin{array}{ll} &  \p_t \mu   = \div ( (2 \a\v^\perp +2\b \v )  \mu)   \\ [2mm]
&
\sigma \p_t \h - \Delta \h + \h = \mu  \\ [2mm]
 & \div \v =0 \ \text{or} \ \p_t \div \v = -\frac{1}{\sigma} \div \v+ \div ( -2\a \v + 2\b \v^\perp)  \mu) - \frac{\lambda}{\a}\Delta \div \v. \end{array}\right.
\end{equation*}
This is a transport equation for $\mu$, coupled with a linear heat equation for $\h$. As before, when choosing $\beta=0$, this equation  coincides with the model of \cite{crs,E2} when  $\N \ll \lep$, but not if $\N $ is larger and $\lambda $ is finite. 
\\

The rest of the paper is organized as follows: We start with some preliminaries in which we recall some properties of the solutions and a priori bounds, introduce the  basic quantities like the stress-energy tensor, the velocity and the modulated energy, and present explicit computations on them. 

Then we present the proofs   in increasing order of complexity: we start with the   proof of Theorem \ref{th1}  which is remarkably short.   We then present the proof of Theorem \ref{th2}, which requires using all the by now standard tools of vortex analysis techniques : vortex-balls constructions, Jacobian estimates, and product estimate.  
An appendix is devoted to the proof of the short-time existence and uniqueness  of solutions to \eqref{limg}, and another one to the computations in the gauge case. 
\\

{\it Acknowledgments: } I would like to warmly thank Jean-Yves Chemin for  his guidance with the proof of Theorem \ref{th-exi}, and to thank Didier Smets for pointing out that the proof for the Gross-Pitaevskii case should work in dimension 3. I also thank Matthias Kurzke, Mitia Duerinckx, and the anonymous referees for their careful reading and very useful comments.

\section{Preliminaries} \label{sec2}
In these preliminaries, we will work in the general setting of \eqref{eq} (or its three-dimensional version) with $\a$, $\b$, nonnegative satisfying $\a^2+\b^2=1$, which allows  us to treat the Gross-Pitaevskii and parabolic (and mixed) cases at once.  We note that with the choice \eqref{defphi}, the limiting  equations \eqref{lime}, \eqref{limg} can always be rewritten 
\begin{equation}\label{eqglob}
\p_t \v=\nab \phi +  2\b \v^\perp \curl \v - 2 \a \v \curl \v.\end{equation}
Here and in all the paper, $C$ will denote some positive constant independent of $\ep$, but which may depend on the various bounds on $\v$.

Also $C^{-1+\gamma}(\R^n)$ denotes functions that are  a sum of derivatives of $C^{\gamma}(\R^n)$ functions and $C^\infty(\R^n)$ bounded functions, and $C^{1+\gamma}(\R^n)$ is the same as $C^{1,\gamma}(\R^n)$, i.e.~functions which are bounded and whose derivative is bounded and $C^\gamma$.

\subsection{A priori bounds}\label{sec:refu}
\subsubsection{A priori estimates on $\v$}
We first gather a few   facts about the solutions $\v$ to \eqref{lime}, \eqref{limg} and \eqref{incel} that we consider.
\begin{lem}\label{lemv}
Let  $\v$ be a  solution to \eqref{lime} in $ L^\infty([0,\infty], C^{0,1}(\R^2))$ such that $\v(0)-d\la \nab U_1, iU_1\ra \in L^2(\R^2)$ and $\curl \v\in L^\infty(\R_+,L^1(\R^2))$. There exists a $\pp$ such that \eqref{lime} holds and such that  for any $0<\t<\infty$,
$\v- d\la \nab U_1, iU_1\ra \in L^\infty([0,\t], L^2(\R^2))$, 
 $\v \in L^\infty([0,\t], L^4(\R^2))$, $\p_t  \v\in L^\infty([0,\t], L^2(\R^2)\cap L^\infty(\R^2))$, $\pp\in L^\infty([0,\t], C^{0,1}(\R^2)  \cap L^2(\R^2)) $, and $ \p_t \pp, \nab \pp \in L^\infty([0,\t], L^2(\R^2))$. 
 
 Let $\v$ be a solution to \eqref{incel} such that $ \v\in L^\infty ([0,T] , C^{0,1}(\R^3)\cap L^2(\R^3))$, $\p_t \v \in L^\infty ([0,T] , L^\infty(\R^3)\cap L^2(\R^3))$ and  $\curl \v(t)\in L^\infty([0,T], L^1(\R^3))$.
There exists a $\pp$ such that \eqref{incel} holds and such that  for any $0<\t<\infty$,  $\pp\in L^\infty([0,\t], C^{0,1}(\R^3)   \cap L^2(\R^3)) $ and $ \p_t \pp, \nab \pp \in L^\infty([0,\t], L^2(\R^3))$. 
 
Let $\v $ be a solution to \eqref{limg} in $L^\infty([0,\t],  C^{1,\gamma}(\R^2))$, $0<\t<\infty$, such that $\v(0)-\la \nab U_1, iU_1\ra\in L^2(\R^2)$. 
 We have $\v- \la \nab U_1, iU_1\ra \in L^\infty([0,\t], L^2(\R^2))$, 
 $\v \in L^\infty([0,\t], L^4(\R^2))$, $\div \v\in L^\infty([0,\t],  L^2(\R^2)\cap L^\infty(\R^2))$ and  $\p_t \v, \nab \div \v \in L^2([0,\t], L^2(\R^2)\cap  L^\infty(\R^2))$. Also for any $t \in [0, \t]$, $\frac{1}{2\pi}\curl \v(t)$ is a probability measure. 
 \end{lem}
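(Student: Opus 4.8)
The plan is to treat the three families of solutions by a common two-step scheme: first read off the spatial integrability of $\v$ itself from the control of $\om:=\curl\v$ together with the prescribed behaviour at infinity, and then construct the pressure (or, for \eqref{limg}, control $\lambda\div\v$) by solving an auxiliary elliptic or parabolic problem whose data are built from $\v$. The regularity hypothesis enters only through $\nab\v\in L^\infty$, which with $\om\in L^\infty(\R_+,L^1)$ gives $\om\in L^1\cap L^\infty$, hence $\om\in L^p$ for every $1\le p\le\infty$. For \eqref{lime} I would set $w:=\v-d\la\nab U_1,iU_1\ra$. The reference field $\la\nab U_1,iU_1\ra$ is smooth, divergence-free, decays like $1/|x|$ and has curl a fixed bump of mass $2\pi$; since the total circulation $\int\om=2\pi d$ is conserved by the curl form of the equation, $w$ is divergence-free with $\curl w\in L^1\cap L^\infty$ of zero integral. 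The Biot--Savart formula $w=\np\Delta^{-1}\curl w$ then gives $w\in L^\infty(\R_+,L^2)$, the zero-mean cancellation upgrading the $1/|x|$ tail to a $1/|x|^2$ one; writing $\v=w+d\la\nab U_1,iU_1\ra$ with $w\in L^2\cap L^\infty$ and $\la\nab U_1,iU_1\ra\in L^p$ for $p>2$ then yields $\v\in L^4$. The $3$D case \eqref{incel} is the same, using $\int|\nab\v|^2=\int|\om|^2$ (valid since $\div\v=0$) to also place $\nab\v\in L^2$.

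Next I would recover $\pp$ from the momentum equation: taking the divergence of \eqref{lime} and using $\div\v=0$ gives $-\Delta\pp=\div F$ with $F=2\b\v^\perp\om-2\a\v\om$, and I define $\pp$ as the decaying solution. Calder\'on--Zygmund theory applied to $\nab\pp=\nab\Delta^{-1}\div F$ gives $\nab\pp\in L^p$ for $1<p<\infty$ at once, hence the $L^2$ bound. The delicate conclusion is $\pp\in C^{0,1}$, i.e.\ $\nab\pp\in L^\infty$: since $\nab\v$ is only bounded, $F$ is bounded but not H\"older and the direct Riesz estimate is unavailable. Here the identity \eqref{vdivv} is decisive: as $\div\v=0$ one has $\v^\perp\om=\div(\v\otimes\v)-\hal\nab|\v|^2$, so with $q:=\pp+\b|\v|^2$ the Poisson data become $-\Delta q=2\b\,\mathrm{tr}((\nab\v)^2)$, a genuine function in $L^1\cap L^\infty$ (a product of first derivatives, each in $L^2\cap L^\infty$) rather than a derivative of a merely bounded field. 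Splitting the kernel of $\nab\Delta^{-1}$ (of size $1/|x|$ in $2$D, $1/|x|^2$ in $3$D) into a near part lying in $L^1$ (convolved against the $L^\infty$ bound) and a bounded far part (convolved against the $L^1$ bound) then gives $\nab q\in L^\infty$, so $\pp=q-\b|\v|^2\in C^{0,1}$ because $|\v|^2\in C^{0,1}$. The $3$D argument is identical, with $\int\mathrm{tr}((\nab\v)^2)=0$ supplying the extra decay needed for $\pp\in L^2$, and the bounds on $\p_t\pp,\nab\pp$ follow by differentiating the elliptic relation in time and inserting the assumed control of $\p_t\v$.

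For \eqref{limg} the new feature is $\div\v\neq0$, and I would exploit the diffusive structure. Setting $s:=\div\v$ and taking the divergence of \eqref{limg} gives the forced heat equation $\p_t s=\lambda\Delta s-2\div(\v\om)$; a parabolic energy estimate (test against $s$, integrate in space-time) propagates $s\in L^\infty([0,\t],L^2\cap L^\infty)$ and produces $\nab\div\v\in L^2([0,\t],L^2)$, whereupon $\p_t\v=\lambda\nab s-2\v\om$ gives the asserted $L^2([0,\t],L^2\cap L^\infty)$ bound, the field integrability of $\v$ being obtained by the Helmholtz decomposition as above. For the probability-measure claim I pass to the vorticity: since $\curl\nab=0$ one gets $\p_t\om=-2\curl(\v\om)$, so $\int\om$ is conserved (the right-hand side is a total derivative) and equals $2\pi$; rewriting this as $(\p_t-2\v^\perp\cdot\nab)\om=-2\om^2$ and following the characteristics of $-2\v^\perp$, the reaction ODE $\dot y=-2y^2$ preserves positivity, so $\curl\v(0)\ge0$ propagates and $\frac1{2\pi}\curl\v(t)$ remains a probability measure.

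The crux throughout is the $C^{0,1}$ bound on the pressure, since Lipschitz regularity of $\v$ is exactly borderline for $\nab\pp\in L^\infty$; it closes only because the incompressible identity \eqref{vdivv}, after the $|\v|^2$-absorption, turns the Poisson data into an honest $L^1\cap L^\infty$ function. The genuinely worse contribution is the term $\v\om$ present when $\a\neq0$: writing $\v\om=\div S$ with $S$ quadratic in $\v$, one only recovers the analogue of $\pp\in C^{0,1}$ once $\v\in C^{1,\gamma}$, which is precisely the regularity postulated for \eqref{limg}; the second delicate point is the sign propagation of $\curl\v$ in that parabolic setting, which the characteristic argument above is designed to handle.
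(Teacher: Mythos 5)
Your overall architecture (Biot--Savart for the integrability of $\v$, an auxiliary Poisson/heat problem for the pressure, characteristics for the sign of $\om$) is different from the paper's, which instead leans on citations: the persistence of the $L^2$-perturbation structure from the Yudovich--Chemin well-posedness theory in the space $E_{2\pi d}$, the appendix Theorem \ref{th-exi} for the basic bounds on \eqref{limg}, and the Besov smoothing estimate \eqref{chemin} for the heat equation \eqref{heatdu} satisfied by $\nab\div\v$. Your device of absorbing $\b|\v|^2$ into the pressure so that the Poisson data become $\mathrm{tr}((\nab\v)^2)\in L^1\cap L^\infty$ is a legitimate and in fact more explicit way to reach the borderline $C^{0,1}$ bound than the paper's one-line appeal to regularity, and the characteristic argument for the propagation of $\curl\v\ge 0$ is correct. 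However, two steps do not close as written.

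First, the $L^2$ bound on $w=\v-d\la\nab U_1,iU_1\ra$. Zero total circulation of $\curl w\in L^1\cap L^\infty$ does \emph{not} upgrade the Biot--Savart tail from $1/|x|$ to $1/|x|^2$: the cancellation $\int(K(x-y)-K(x))\curl w(y)\,dy$ produces the gain only against a first moment $\int|y|\,|\curl w(y)|\,dy$, which is neither assumed nor propagated. With only $L^1\cap L^\infty$ vorticity of zero mean, the kernel splitting gives $\np\Delta^{-1}\curl w\in L^q$ for every $q>2$ but misses exactly the endpoint $L^2$ you need (and $1/|x|$ itself fails to be in $L^2(\R^2)$ at infinity only logarithmically, so there is no room). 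The paper avoids this by quoting the persistence result of the well-posedness theory, which is an energy estimate on the \emph{difference} $\v(t)-\v(0)$ along the flow, not a pointwise decay statement at fixed time. The same gap recurs when you invoke ``the Helmholtz decomposition as above'' for \eqref{limg}, where the paper instead uses Theorem \ref{th-exi}. Second, for \eqref{limg} the lemma asserts $\p_t\v,\ \nab\div\v\in L^2([0,\t],L^2(\R^2)\cap L^\infty(\R^2))$, and your parabolic energy estimate for $s=\div\v$ only yields $\nab s\in L^2([0,\t],L^2)$; it gives no $L^\infty$-in-space control of $\nab s$, and the claimed propagation of $s\in L^\infty([0,\t],L^\infty)$ by testing against $s$ is unjustified because the forcing $\div(\v\om)$ is only a distribution of H\"older order $-1+\gamma$. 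The paper obtains the missing $L^\infty$ bound by differentiating the equation to get the heat equation \eqref{heatdu} for $\nab\div\v$ with right-hand side in $C^{-2+\gamma}$ and applying the smoothing estimate \eqref{chemin}; this is where the full $C^{1,\gamma}$ hypothesis (rather than Lipschitz) is genuinely used, and some substitute for it is needed in your scheme.
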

 \begin{proof}
Let us start with the case of \eqref{lime}. We first observe that such a solution exists, and that it corresponds to the solution belonging to the space $E_{2\pi d}$ in the notation of \cite[Definition 1.3.3]{chemin}. It is also known (cf. \cite{chemin}) that the solution remains such that $\v(t)- d\frac{x^\perp}{|x|^2}  \in L^\infty([0,\t], L^2(\R^2\backslash B(0,1)))$ and thus $\v(t)-\la \nab U_1, iU_1\ra \in L^\infty([0,\t], L^2(\R^2)$ (the case of general $\a $ and $\b$ works similarly). 

Since   $\la \nab U_1,i U_1\ra $ decays like $1/|x|$, by boundedness of $\v$ and the $L^2$ character of $\v- d \la \nab U_1, iU_1\ra$, we deduce that $\v\in L^\infty(\R_+,  L^4(\R^2))$. 

The integrability of $\pp$ is deduced from that of $|\v|^2$ by using the formula 
\begin{equation}\label{forme}
\Delta \pp = 2\b \( \div \div (\v\otimes \v) -\hal \Delta |\v|^2 \) +2\a  \div ( \div (\v\otimes \v))^\perp \end{equation}which means that $\Delta \pp$ is a second order derivative of $\v\otimes \v$. 
Since $\v\in  L^4(\R^2)\cap C^{0,1}(\R^2)$, this allows us to pick a pressure $\pp$ such that $\pp\in L^\infty\cap L^2$ (cf.~\cite[p.13]{chemin}) and it is also $C^{0,1}$ by the assumed regularity of $\v$.  
We also note that $\v \curl \v\in L^2(\R^2)\cap L^\infty(\R^2)$ uniformly in time by boundedness of $\v, \curl \v$, and the fact that $\curl \v$ is  integrable uniformly in time.  On the other hand, we may write 
$$ \p_t \v= \nab^\perp \Delta^{-1}\p_t \curl \v= \nab^\perp \Delta^{-1}\div (-2\b \v\curl \v + 2\a \v^\perp \curl \v)$$ where $\Delta^{-1}$ is the convolution with $-\frac{1}{2\pi} \log$,
and with the above remark, we may deduce that $\p_t \v$ remains in $L^2(\R^2)\cap L^\infty(\R^2)$ uniformly in time, and the result for $\nab \pp$ follows by using the equation. The same result follows for $\p_t \pp$ by applying $\p_t \Delta^{-1}$ to   \eqref{forme}. 

The same arguments  apply to prove the results stated for  \eqref{incel}.

Let us now turn to \eqref{limg}. It is proved in 
Theorem \ref{th-exi} that $\v(t)-\v(0)\in L^\infty([0,\t],L^2(\R^2))$,  $\p_t \v\in L^2([0,\t],L^2(\R^2))$,  and $\div \v\in L^\infty([0,\t],L^2(\R^2)).$ It also  follows immediately that $\v(t)- \la \nab U_1, iU_1\ra \in L^\infty([0,\t], L^2(\R^2))$ and the uniform $L^4 $ character of $\v$ follows just as in the Euler case above.
 The fact that  $\nab \div \v\in L^2([0,\t], L^2(\R^2))$ follows in view of  \eqref{limg}.  The fact that $\frac{1}{2\pi}\curl \v$ remains a probability measure is standard by integrating the equation. 

Next, differentiating \eqref{limg} we find that 
\begin{equation}\label{heatdu}\p_t (\nab \div \v) = \lambda \Delta (\nab \div \v) -2 \nab \div (\v \curl \v) .\end{equation}
It can be found in \cite[Proposition 2.1]{chemin2} that if $u$ solves on $\R^2$ 
the equation $\p_t u- \Delta u= f$ on $[0,T]$ with  $f\in L^\infty([0,T], C^{-2,\gamma}(\R^2))$ and initial data $u_0\in C^{0, \gamma}(\R^2)$ then 
\begin{equation}\label{chemin}
\|u\|_{L^\infty([0,T], C^\gamma(\R^2))} \le C_T \( \|u_0\|_{C^{\gamma}(\R^2)}+ \|f\|_{L^\infty([0,T], C^{-2+\gamma}(\R^2))}\)
\end{equation}
(it suffices to apply the result there with $\rho=p=\infty$ and $s=-2+\gamma$ and notice that the Besov space $B_\infty^s$ is the same as $C^s$ or  $C^{0,s}$).
Applying this to \eqref{heatdu},  since the right-hand side is $L^\infty([0,\t], C^{-2+\gamma}(\R^2))$ we obtain that $\nab \div \v \in L^\infty([0,\t],C^\gamma(\R^2))$ hence $L^\infty([0,\t], L^\infty(\R^2))$. Inserting into \eqref{limg} and using that $\curl \v \in L^1\cap L^\infty$  yields that $\p_t \v \in L^\infty([0,\t], L^\infty(\R^2))$.

\end{proof}
\subsubsection{Estimates on the solutions to \eqref{eq}}




\begin{lem}\label{apriori}
Assume $u_\ep$  and $\v(t)$ satisfy the assumptions of Theorem \ref{th1} or \ref{th2}.  Let $U_{D_\ep} =1$ in the three-dimensional case of Theorem \ref{th1} and $D_\ep=\N$ in the case of Theorem \ref{th2}. 
 Then for any $\t>0$ we have $\p_t u_\ep \in L^1 ([0,\t], L^2(\R^n))$ and for any $t \in [0, \t] $,
$$ \nab (u_\ep(t)-  U_{D_\ep}),  1-|u_\ep(t)|^2,  \nab u_\ep(t)-iu_\ep(t) \N \v(t)$$ all belong to  $L^2(\R^n)$, $\E(u_\ep(t),t)$ is  finite, and 
$$j_\ep(t)-\N \v(t)\in (L^1+L^2)(\R^n).$$
\end{lem}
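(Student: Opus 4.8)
The plan is to verify each claimed membership by comparing $u_\ep$ with the reference map $U_{D_\ep}$ and $\N\v$ with its prescribed behaviour at infinity, relying on the Cauchy theory of \cite{bs,miot} (for $n=2$) and \cite{gerard,gallo} (for $n=3$), which provides $u_\ep(t)-U_{D_\ep}\in C^0([0,\t],H^1(\R^n))$, together with the properties of $\v$ recorded in Lemma \ref{lemv}. Throughout I write $w:=u_\ep-U_{D_\ep}\in H^1$ and use the Sobolev embeddings $H^1(\R^2)\hookrightarrow L^p$ for all $p<\infty$ and $H^1(\R^3)\hookrightarrow L^p$ for $2\le p\le 6$.

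The bound $\nab(u_\ep-U_{D_\ep})=\nab w\in L^2$ is immediate. For $1-|u_\ep|^2$, I expand $|u_\ep|^2=|U_{D_\ep}|^2+2\la U_{D_\ep},w\ra+|w|^2$; since $1-|U_{D_\ep}|^2$ is bounded with compact support, $U_{D_\ep}$ is bounded, and $w\in L^2\cap L^4$, each term lies in $L^2$, so $1-|u_\ep|^2\in L^2$ and $(1-|u_\ep|^2)^2/\ep^2\in L^1$ (for $n=3$ this is already built into the space $E$). The central term is $\nab u_\ep-iu_\ep\N\v$, which I split as
\[\nab u_\ep-iu_\ep\N\v=\nab w+\big(\nab U_{D_\ep}-iU_{D_\ep}\N\v\big)-i\N\v\,w.\]
The first summand is in $L^2$, and the last is too, because $\v\in L^\infty$ (as $\v\in C^{0,1}$) and $w\in L^2$. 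For the middle summand I use $\N d=D_\ep$ to write
\[\nab U_{D_\ep}-iU_{D_\ep}\N\v=\big(\nab U_{D_\ep}-iD_\ep U_{D_\ep}\la\nab U_1,iU_1\ra\big)-iU_{D_\ep}\N\big(\v-d\la\nab U_1,iU_1\ra\big).\]
By Lemma \ref{lemv} the factor $\v-d\la\nab U_1,iU_1\ra$ is in $L^2$ uniformly on $[0,\t]$, so the second piece is in $L^2$; the first piece vanishes identically outside $B(0,1)$, since there $U_{D_\ep}=e^{iD_\ep\theta}$ gives $\nab U_{D_\ep}=iD_\ep(\nab\theta)U_{D_\ep}$ while $\la\nab U_1,iU_1\ra=\nab\theta$, and it is bounded inside $B(0,1)$. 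Hence $\nab u_\ep-iu_\ep\N\v\in L^2$. For $n=3$ the difficulty is absent: there $U_{D_\ep}=1$, so $\nab u_\ep=\nab w\in L^2$, while $\v\in L^2\cap L^\infty$ makes $iu_\ep\N\v=i\N\v+i\N\v\,w\in L^2$ directly.

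Finiteness of $\E(u_\ep(t),t)$ then follows: the first two terms of \eqref{defE} are controlled by the above, and $\N^2(1-|u_\ep|^2)\psi(t)\in L^1$ because $1-|u_\ep|^2\in L^2$ and $\psi(t)\in L^2$ (from $\pp,|\v|^2\in L^2$, Lemma \ref{lemv}). For $j_\ep-\N\v$, I insert $\nab u_\ep=(\nab u_\ep-iu_\ep\N\v)+iu_\ep\N\v$ into $j_\ep=\la iu_\ep,\nab u_\ep\ra$ and use $\la iu_\ep,iu_\ep X\ra=|u_\ep|^2X$ for real $X$, obtaining
\[j_\ep-\N\v=\la iu_\ep,\nab u_\ep-iu_\ep\N\v\ra+(|u_\ep|^2-1)\N\v.\]
The second term is in $L^2$ ($L^2$ times a bounded field). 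In the first, splitting $iu_\ep=iU_{D_\ep}+iw$, the bounded part $U_{D_\ep}$ paired with the $L^2$ field gives an $L^2$ contribution, while $w$ (in $L^4$ if $n=2$, in $L^6$ if $n=3$) paired with it lies in $L^{4/3}$, resp. $L^{3/2}$, hence in $L^1+L^2$. Thus $j_\ep-\N\v\in(L^1+L^2)(\R^n)$.

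It remains to treat $\p_t u_\ep$. In the parabolic case ($\a=1,\b=0$) I use the energy--dissipation relation obtained by pairing \eqref{eq} with $\p_t u_\ep$ and integrating: it bounds $\frac{\N}{\lep}\int_0^\t\int_{\R^2}|\p_t u_\ep|^2$ by the (renormalized) Ginzburg--Landau energy of $u_\ep^0$, which is finite for $u_\ep^0\in U_{\N}+H^1$, so $\p_t u_\ep\in L^2([0,\t],L^2)\subset L^1([0,\t],L^2)$ by Cauchy--Schwarz in time. In the Gross--Pitaevskii case there is no dissipation, so I instead invoke propagation of $H^2$ regularity from the Cauchy theory (the data satisfy $\Delta u_\ep^0\in L^2$): this gives $\Delta u_\ep(t)\in L^2$ uniformly on $[0,\t]$, and since the computations above show $u_\ep(1-|u_\ep|^2)\in L^2$, the equation $\p_t u_\ep=\frac{1}{i\N}\big(\Delta u_\ep+\ep^{-2}u_\ep(1-|u_\ep|^2)\big)$ yields $\p_t u_\ep\in L^\infty([0,\t],L^2)\subset L^1([0,\t],L^2)$. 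The main obstacle is the $L^2$-membership of $\nab u_\ep-iu_\ep\N\v$ in the two-dimensional cases: it is the only step requiring genuine cancellation at infinity rather than routine Sobolev bookkeeping, since $\nab U_{D_\ep}$ and $iU_{D_\ep}\N\v$ each decay only like $1/|x|$ and are individually not in $L^2(\R^2)$, and it hinges on the prescribed asymptotics $\v-d\la\nab U_1,iU_1\ra\in L^2$ supplied by Lemma \ref{lemv}.
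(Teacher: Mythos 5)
Your proof is correct and follows essentially the same route as the paper's: both rely on the Cauchy theory of \cite{bs,miot,gerard} for $w_\ep=u_\ep-U_{D_\ep}$ and for the time derivative (dissipation in the parabolic case, propagation of $H^2$/$X^2$ regularity in the Schr\"odinger case), and both reduce the key term $\nab u_\ep-iu_\ep\N\v$ to the $L^2$ bound on $\N\v-\la\nab U_{D_\ep},iU_{D_\ep}\ra$ supplied by Lemma \ref{lemv}, exploiting the exact cancellation of $\nab U_{D_\ep}$ against $iD_\ep U_{D_\ep}\nab\theta$ outside $B(0,1)$. The only cosmetic differences are that you derive $1-|u_\ep|^2\in L^2$ directly from Sobolev embedding rather than quoting \cite{bs,miot}, and you use a slightly different (but equivalent) splitting for $j_\ep-\N\v$.
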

\begin{proof}
First let us justify that $\p_t u_\ep(t) \in L^1([0,\t],L^2(\R^n))$.
In the two-dimensional Gross-Pitaevskii case, since we assume $u_\ep^0 \in U_{D_\ep} + H^2(\R^2)$, then studying  the equation for $w_\ep:= u_\ep- U_{D_\ep}$, we find in \cite[Prop. 3, Lemma 3]{bs} that $w_\ep$   remains in $H^2(\R^2)$ (by semi-group theory) and thus $\p_t u_\ep=\p_t w_\ep \in L^\infty_{loc}(\R, L^2(\R^2))$ by the equation.

For the two-dimensional parabolic (or mixed flow case), we assume $u_\ep^0 \in U_{D_\ep}+ H^1(\R^2)$ and (by decay of the energy for $w_\ep:= u_\ep- U_{D_\ep}$)  it shown in \cite[Theorem 1]{miot} that $\p_t u_\ep \in L^1([0,\t],L^2(\R^2))$.

In the case of the three-dimensional solutions to \eqref{gp3}, it is shown in \cite[Sec. 3.3]{gerard} that when  $\Delta u_\ep^0  \in L^2 (\R^3)$ then the solution belongs for all time to $X^2:=\{u\in L^\infty(\R^3), D^2u\in L^2(\R^3)\} $ and thus in view of the equation $\p_t u_\ep \in L^\infty_{loc}(\R, L^2(\R^3))$.

Let us turn to the two-dimensional cases. 
Following exactly \cite[Proposition 3]{bs} or \cite[Lemma A.6]{miot}, we get that 
$$ \hal \int_{\R^2} |\nab (u_\ep(t)- U_{D_\ep})|^2 +|u_\ep(t)-U_{D_\ep} |^2+(1-|u_\ep(t)|^2)^2 \le C(\ep,t)$$
where $C(\ep, t)$ is finite and  depends on  $\ep$, $t$ and $U_{D_\ep}$. We thus obtain   the $L^2$ character of the first three items. 

By Lemma \ref{lemv} and  $D_\ep=d \N$ (resp. $D_\ep= \N$), we  have that $\N\v(t) -\la  \nab  U_{D_\ep}, iU_{D_\ep} \ra \in L^2(\R^2)$. 
We may  then write $$|\nab u_\ep-iu_\ep\N \v|\le |\nab (u_\ep - U_{D_\ep})|+ \N|\v| |u_\ep-U_{D_\ep}|+ |\nab  U_{D_\ep} -i U_{D_\ep} \N\v|.$$  The first two  expressions in the right-hand side are in $L^2(\R^2)$ by what precedes and the boundedness of $\v$. For the third quantity, we have that $|\nab  U_{D_\ep}- iU_{D_\ep} \N \v| = |\la \nab  U_{D_\ep},iU_{D_\ep} \ra -  \N \v | $ outside of $B(0,1)$ and is bounded in $B(0,1)$, by definition of $U_D$, hence is in $L^2$ by Lemma \ref{lemv}.  We conclude by Lemma~\ref{lemv} that this term is also in $L^2(\R^2)$. 
The finiteness of $\E(u_\ep(t),t)$ is then an immediate consequence of what precedes, the fact that $\v\in L^4$ from Lemma~\ref{lemv}, and the Cauchy-Schwarz inequality.

Lastly, for  $j_\ep -\N\v$ we write $$|j_\ep -\N \v|\le  |\la \nab U_{D_\ep}, i U_{D_\ep}\ra - \N \v|+  |\nab ( U_{D_\ep} - u_\ep)| |u_\ep| + |\nab  U_{D_\ep}| |u_\ep- U_{D_\ep}| $$  and we conclude by the previous observations, writing $|u_\ep|=1+ (|u_\ep|-1)$ and using that $ (1-|u_\ep|)^2 \le (1-|u_\ep|^2)^2$, that $j_\ep-\N \v \in L^1+L^2(\R^n)$.

\end{proof}

\subsubsection{Coerciveness of the modulated energy}
We check that the modulated energy $\E$ does control the quantities we are interested in. 
We have 
\begin{lem} \label{contrmod}
The functional $\E$ being as in \eqref{defE} and $\psi$ as in \eqref{defpsi},  we have for any $u_\ep$, $R \ge 1$, $1<p<\infty$,
\begin{equation}\label{contmod}\hal \int_{\R^n} |\nab u_\ep-iu_\ep\N \v|^2 + \frac{(1-|u_\ep|^2)^2}{4\ep^2} \le \E(u_\ep) +\ep^2 \N^4 \|\psi\|_{L^2}^2 ,\end{equation}
\begin{multline}\label{contjl1}
\int_{B_R}  |j_\ep -\N \v|\le  C_{R,p}  \|\nab u_\ep -iu_\ep \N \v \|_{L^p(B_R)}  \\ +C_{R,p} (  \ep^2 \N^2  \|\psi\|_{L^2}   + \N \ep \|\v\|_{L^\infty} )  \E(u_\ep)^{\hal} + C\ep \E(u_\ep)+C_R \ep^2 \N^3\|\v\|_{L^\infty}\|\psi\|_{L^2}, 
\end{multline}
and
\begin{equation}\label{contj}
 \int_{\R^n} |j_\ep - \N \v|^2 \le C ( \|u_\ep\|_{L^\infty}^2 +  \ep^2 \N^2 \|\v\|_{L^\infty}^2 ) (\E(u_\ep) + \ep^2 \N^4 \|\psi \|_{L^2}^2) ,\end{equation}where $C$ is universal, $C_R$ depends only on $R$ and $C_{R,p}$ on $R$ and $p$.
\end{lem}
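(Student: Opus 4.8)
The plan is to reduce all three bounds to a single pointwise algebraic identity together with the Young and Cauchy--Schwarz inequalities, using only that the potential part and the modulated-gradient part of $\E$ control $\int(1-|u_\ep|^2)^2$ and $\int|\nab u_\ep-iu_\ep\N\v|^2$ respectively. For \eqref{contmod} I would observe that the left-hand side differs from $\E(u_\ep)$ only in the potential terms: $\E$ carries $\frac{(1-|u_\ep|^2)^2}{2\ep^2}+\N^2(1-|u_\ep|^2)\psi$, while the left-hand side carries $\frac{(1-|u_\ep|^2)^2}{4\ep^2}$. Thus \eqref{contmod} is equivalent to
$$-\int_{\R^n}\N^2(1-|u_\ep|^2)\psi \le \int_{\R^n}\frac{(1-|u_\ep|^2)^2}{4\ep^2}+\ep^2\N^4\|\psi\|_{L^2}^2,$$
which follows by applying the pointwise Young inequality $ab\le\frac{a^2}{4\ep^2}+\ep^2 b^2$ with $a=|1-|u_\ep|^2|$ and $b=\N^2|\psi|$ and integrating. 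In particular \eqref{contmod} yields the two coercivity facts $\int|\nab u_\ep-iu_\ep\N\v|^2\le 2M$ and $\int(1-|u_\ep|^2)^2\le 4\ep^2 M$, where $M:=\E(u_\ep)+\ep^2\N^4\|\psi\|_{L^2}^2$, which feed the other two estimates.

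The second ingredient, used for \eqref{contjl1} and \eqref{contj}, is the identity coming from $\la iu_\ep,iu_\ep\ra=|u_\ep|^2$: since $\la iu_\ep,\nab u_\ep-iu_\ep\N\v\ra = j_\ep-|u_\ep|^2\N\v$, one has
$$j_\ep-\N\v = \la iu_\ep,\nab u_\ep-iu_\ep\N\v\ra+(|u_\ep|^2-1)\N\v,$$
whence the pointwise bound $|j_\ep-\N\v|\le |u_\ep|\,|\nab u_\ep-iu_\ep\N\v|+|1-|u_\ep|^2|\,\N|\v|$. For \eqref{contj} I would square this, use $(a+b)^2\le 2a^2+2b^2$, pull out $\|u_\ep\|_{L^\infty}$ and $\|\v\|_{L^\infty}$, integrate, and insert the two coercivity facts to get $\int|j_\ep-\N\v|^2\le C(\|u_\ep\|_{L^\infty}^2+\ep^2\N^2\|\v\|_{L^\infty}^2)M$, which is \eqref{contj}.

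For the $L^1$ estimate \eqref{contjl1} the point is to separate the clean leading term from genuinely small remainders. I would write $|u_\ep|=1+(|u_\ep|-1)$ in the first term of the pointwise bound, so that over $B_R$: the contribution $\int_{B_R}|\nab u_\ep-iu_\ep\N\v|$ is controlled by $C_{R,p}\|\nab u_\ep-iu_\ep\N\v\|_{L^p(B_R)}$ via H\"older; the contribution $\int_{B_R}||u_\ep|-1|\,|\nab u_\ep-iu_\ep\N\v|$ is handled by Cauchy--Schwarz together with the elementary inequality $(|u_\ep|-1)^2\le(1-|u_\ep|^2)^2$ and the coercivity facts; and the modulus-defect term $\N\|\v\|_{L^\infty}\int_{B_R}|1-|u_\ep|^2|$ is bounded by Cauchy--Schwarz on $B_R$ and the potential coercivity. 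Collecting terms and using $\sqrt{a+b}\le\sqrt a+\sqrt b$ to split $M^{1/2}$ into $\E^{1/2}+\ep\N^2\|\psi\|_{L^2}$ reproduces the right-hand side of \eqref{contjl1}.

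I expect the only delicate point to be the bookkeeping in \eqref{contjl1}: one must check that every remainder carries at least one power of $\ep$ (or one of the small factors $\ep^2\N^2\|\psi\|_{L^2}$, $\N\ep\|\v\|_{L^\infty}$), so that the estimate genuinely expresses that $j_\ep/\N-\v$ is small in $L^1_{loc}$ modulo the strong-$L^p$ term. The pointwise inequality $(|u_\ep|-1)^2\le(1-|u_\ep|^2)^2$, valid because $(|u_\ep|+1)^2\ge 1$, is exactly what converts the modulus defect into the $\ep$-scaled potential part of $\E$; once this is in place the remaining manipulations are routine.
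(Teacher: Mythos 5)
Your proposal is correct and follows essentially the same route as the paper: the Young/completed-square absorption of the $\N^2(1-|u_\ep|^2)\psi$ term for \eqref{contmod}, the pointwise identity $j_\ep-\N\v=\la iu_\ep,\nab u_\ep-iu_\ep\N\v\ra+(|u_\ep|^2-1)\N\v$ (the paper's \eqref{pjj}) with $(|u_\ep|-1)^2\le(1-|u_\ep|^2)^2$, and then H\"older on $B_R$ for \eqref{contjl1} and squaring for \eqref{contj}. The bookkeeping of the small remainders you flag is indeed the only place requiring care, and it works out exactly as in the paper.
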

In view of assumptions \eqref{condN1} and \eqref{condN2} and Lemma \ref{lemv}, the second term on the right-hand sides of \eqref{contmod} and \eqref{contj} will always be bounded by $o(\N^2)$.  Also, if $\|u_\ep\|_{L^\infty}\le C$, the upper bound in \eqref{contj} is by $C \E(u_\ep) + o(1) $ and that in \eqref{contjl1} is by $ C_{R,p} \E(u_\ep)^{\hal} + C \ep \E(u_\ep)$.
\begin{proof} We observe that 
\begin{equation*}\frac{(1-|u_\ep|^2)^2}{2\ep^2}+  \N^2 \psi  (1-|u_\ep|^2 )
= \frac{(1-|u_\ep|^2 +\ep^2 \N^2 \psi)^2}{2\ep^2} -\ep^2\N^4  \psi^2\ge -\ep^2\N^4  \psi^2.\end{equation*}
Thus, using that $\psi \in L^2$ by Lemma \ref{lemv}, we have
$$\hal \int_{\R^n}  |\nab u_\ep-iu_\ep\N \v(t)|^2+ \frac{(1-|u_\ep|^2)^2}{4\ep^2} \le \E(u_\ep)  +  \hal\int_{\R^n}\ep^2 \N^4 \psi^2 .$$ 
For \eqref{contj}, we write that 
\begin{multline}\label{pjj}
|j_\ep - \N \v|\le |j_\ep - |u_\ep|^2\N \v|+\N |1-|u_\ep|^2|  |\v|=  |\langle i u_\ep, \nab u_\ep -iu_\ep \N \v\rangle|+\N |1-|u_\ep|^2|  |\v|
\\ 
\le |u_\ep| |\nab u_\ep - iu_\ep \N \v|+ \N |1-|u_\ep|^2|  |\v|\\
= |\nab u_\ep - iu_\ep \N \v|+ (|u_\ep|-1) |\nab u_\ep -iu_\ep \N\v|+ \N |1-|u_\ep|^2|  |\v|.\end{multline}
For \eqref{contjl1} we integrate this relation over $B_R$ and use H\"older's  inequality to get that for any $1<p<\infty$,
\begin{multline*}
\int_{B_R} |j_\ep -\N\v|\le C_{R,p} \( \int_{B_R} |\nab u_\ep-iu_\ep \N \v|^p\)^{\frac{1}{p}}
\\ +\( \int_{B_R} (1-|u_\ep|)^2\)^{\hal} \(\int_{B_R}  |\nab u_\ep -iu_\ep \N \v|^2 \)^\hal + C_R \|\v\|_{L^\infty}  \N \(\int_{B_R} (1-|u_\ep|^2)^2\)^{\hal}\end{multline*} and using that $(1-|u|)^2 \le (1-|u|^2)^2$ we are led to 
\begin{multline*}\int_{B_R} |j_\ep -\N\v|
\le C_{R,p}\|\nab u_\ep -iu_\ep \N \v \|_{L^p(B_R)}  \\+ C\ep (\E(u_\ep)+\N)\( \E(u_\ep)    +\ep^2\N^4\|\psi\|_{L^2}^2\)^\hal
\end{multline*}hence \eqref{contjl1} follows.
The proof of \eqref{contj} is a straightforward consequence of \eqref{pjj}. 
\end{proof}

\subsection{Identities} In this section, we  present important standard and less standard identities that will be used throughout the paper. In all that follows $\v$ is a vector field, which   implicitly depends on time and  solves one of our limiting equations.
\subsubsection{Current and velocity}
We recall that for a family $\{u_\ep\}_\ep$,  the supercurrent and vorticity (or Jacobian)  are defined as 
$$j_\ep = \la \nab u_\ep,iu_\ep\ra\qquad
\mu_\ep= \curl j_\ep.$$
 Following \cite{ss6}, we also define the  velocity 
\begin{equation}\label{defV}
V_\ep:= 2 \la i \p_t u_\ep,  \nab u_\ep\ra
\end{equation} and we have the identity
\begin{equation}\label{dtj}
\p_t j_\ep= \nab \la iu_\ep, \p_t u_\ep \ra +V_\ep.\end{equation}
Taking the curl of this relation yields that $\p_t \mu_\ep=\curl V_\ep$. (In dimension 2, this  means that   the vorticity is transported by $V_\ep^\perp$, hence the name velocity).
We also define  the modulated vorticity 
\begin{equation}\label{tildemu}
\tilde \mu_\ep := \curl \(\la  \nab u_\ep -iu_\ep \N \v,iu_\ep\ra + \N \v\),
\end{equation} and the modulated velocity 
\begin{equation}\label{tV}
\tV:= 2 \la i(\p_t u_\ep -iu_\ep \N \phi) , \nab u_\ep -iu_\ep\N \v \ra = V_\ep -\N \v \p_t |u_\ep|^2  +\N \phi \nab |u_\ep|^2 ,\end{equation}
with $\phi$ as in \eqref{defphi}.

We will  use the fact that for $u_\ep$ solution of \eqref{eq} (resp. \eqref{gpr}) we have the relation 
\begin{equation}\label{divji}
\div j_\ep= \N \la (\frac{\a}{\lep}+i\b)\p_t u_\ep, iu_\ep\ra,
\end{equation}
(resp. with $\alpha=0$ and $\beta =1$) 
which is obtained by taking the inner product of \eqref{eq} or \eqref{gpr} with $iu_\ep$.

\subsubsection{Stress-energy tensor}
We next introduce the stress-energy tensor associated to a function  $u$: it is the $n\times n$ tensor defined by 
\begin{equation}\label{defT}
(S_\ep(u))_{kl}:= \la \p_k u, \p_l u \ra - \hal \( |\nab u|^2 + \frac{1}{2\ep^2} (1-|u|^2)^2\)\delta_{kl}.\end{equation}
A direct computation shows that if $u$ is sufficiently regular, we have
$$\div S_\ep(u) :=\sum_{l=1}^n \p_l (S_\ep(u))_{kl}= \la \nab u, \Delta u + \frac{1}{\ep^2} u (1-|u|^2)\ra$$ so
 if $u_\ep$ solves \eqref{eq} or \eqref{gpr}, we have 
\begin{equation}\label{divT}
\div S_\ep(u_\ep)=\N \la   (\frac{\a}{\lep} + i \b) \p_t u_\ep, \nab u_\ep \rangle .\end{equation} 
We next introduce the modulated stress-energy tensor~: 
\begin{multline}
(\T (u))_{kl}:= \la \p_k u -iu \N \v_k, \p_l u -iu \N \v_l\ra  + \N^2 (1-|u|^2 ) \v_k \v_l 
\\- \hal \( |\nab u-iu\N\v |^2 + (1-|u|^2) \N^2|\v|^2+ \frac{1}{2\ep^2} (1-|u|^2)^2\)\delta_{kl},\end{multline}where $\delta_{kl}$ is $1$ if $k=l$ and $0$ otherwise.
One can observe that 
\begin{equation}\label{borneS}
|\T(u)|\le |\nab u-iu\N \v|^2+\frac{1}{4\ep^2} (1-|u|^2)^2+\N^2|1-|u|^2| |\v|^2\end{equation}and thus with Lemma \ref{contrmod} and the Cauchy-Schwarz inequality, we may write
 \begin{equation}\label{bornints}
\int_{\R^n} |\T(u)|\le 2 \E(u) +\N^2 \|1-|u|^2\|_{L^2}\|\v\|_{L^4}^2+ 2\ep^2 \N^4 \|\psi\|^2_{L^2}.\end{equation}
For simplicity, we will also denote $S_\ep$ for $S_\ep(u_\ep)$ and $\T$ for $\T(u_\ep)$, as well as
\begin{equation}\label{Sv}
S_\v:= \v \otimes \v - \hal |\v|^2 I.\end{equation}

\begin{lem}\label{divst} Let  $u_\ep$ solve \eqref{eq} or \eqref{gpr} and $\v$  and $\phi$ be as above. Then we have
\begin{multline}\label{219}
\div \T(u_\ep)=  \frac{\N\a}{\lep} \la \p_t u_\ep -iu_\ep \N \phi , \nab u_\ep -iu_\ep \N \v\ra +   \frac{\beta}{2}\N V_\ep-\beta \N^2\v \la \p_t u_\ep,u_\ep\ra\\
- \N^2 \frac{\N\a}{\lep}|u_\ep|^2 \v \phi + \N j_\ep ( \frac{\N\a}{\lep} \phi  -\div \v)
\\
+ \N^2 \div S_\v- \N\((\v \cdot \nab) j_\ep + (j_\ep \cdot \nab ) \v -\nab (j_\ep \cdot \v)\),
\end{multline}
which in dimension $n=2$ can be rewritten
\begin{align}\label{divtt2}
\div \T(u_\ep) = & \frac{\N\a}{\lep}  \la \p_t u_\ep -iu_\ep \N \phi , \nab u_\ep -iu_\ep \N \v\ra  +\frac{\b}{2}\N V_\ep   -\b \N^2 \v  \la \p_t u_\ep, u_\ep \ra  \\
\nonumber &  + \N^2\v^\perp\curl \v -\N j_\ep^\perp \curl \v - \N \v^\perp \mu_\ep
\\ \nonumber &+\N^2  \v (\div \v - \frac{\N\a}{\lep}|u_\ep|^2 \phi )+ \N j_\ep (\frac{\N\a}{\lep} \phi-\div \v).\end{align} \end{lem}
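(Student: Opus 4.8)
The plan is to prove \eqref{219} by reducing the modulated tensor $\T(u_\ep)$ to the ordinary stress-energy tensor $S_\ep$ of \eqref{defT} plus two explicit correction pieces, differentiating each piece separately, and then reorganizing the first-order terms into the gauged form; the two-dimensional identity \eqref{divtt2} will follow by inserting standard planar vector identities.

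First I would expand the modulated squares defining $\T$. Writing out $\la \p_k u -iu\N\v_k, \p_l u -iu\N\v_l\ra$ and $|\nab u -iu\N\v|^2$ and using $(j_\ep)_k=\la \p_k u, iu\ra$ together with $\la iu,iu\ra=|u|^2$, the factors of $|u|^2$ multiplying $\v_k\v_l$ and $|\v|^2$ are exactly absorbed by the terms carrying $\N^2(1-|u|^2)$, so that all dependence on $|u|^2$ disappears from the $\v$-quadratic contributions. This produces the clean decomposition
\begin{equation*}
(\T)_{kl}=(S_\ep)_{kl}+\N^2 (S_\v)_{kl}-\N\(\v_l (j_\ep)_k+\v_k (j_\ep)_l-(j_\ep\cdot\v)\delta_{kl}\),
\end{equation*}
with $S_\v$ as in \eqref{Sv}. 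Taking the divergence, the first term is given by \eqref{divT}, the second is kept as $\N^2\div S_\v$, and the coupling term is differentiated by the Leibniz rule, yielding $-\N\((\div\v)j_\ep+(\v\cdot\nab)j_\ep+(j_\ep\cdot\nab)\v+(\div j_\ep)\v-\nab(j_\ep\cdot\v)\)$, into which I substitute the scalar relation \eqref{divji} for $\div j_\ep$. The transport combination $(\v\cdot\nab)j_\ep+(j_\ep\cdot\nab)\v-\nab(j_\ep\cdot\v)$ already appears in the desired form, and splitting $\la(\frac{\a}{\lep}+i\b)\p_t u,\cdot\ra$ into its $\a$ and $\b$ parts immediately produces $\frac{\b}{2}\N V_\ep$ via \eqref{defV} and $-\b\N^2\v\la\p_t u,u\ra$ via the $\b$-part of $-\N\v\div j_\ep$.

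The step I expect to be the main obstacle is the algebraic completion of the $\a$-part into the gauged expression $\frac{\N\a}{\lep}\la\p_t u-iu\N\phi,\nab u-iu\N\v\ra$. Expanding this modulated inner product componentwise gives precisely $\la\p_t u,\p_k u\ra-\N\v_k\la\p_t u,iu\ra$ plus two spurious cross-terms, $-\N\phi(j_\ep)_k$ and $+\N^2\phi\,|u|^2\v_k$; I would check that these are cancelled exactly by the remaining explicit terms $\N j_\ep\frac{\N\a}{\lep}\phi$ and $-\N^2\frac{\N\a}{\lep}|u|^2\v\phi$ of \eqref{219}. This cancellation is the entire reason for the specific choice of the temporal gauge $\phi$, and the bookkeeping of the real versus imaginary parts of $\la\cdot,\cdot\ra$ against the factors of $i$ (in particular $\la iu,iu\ra=|u|^2$ and $\la\p_t u,iu\ra=-\la i\p_t u,u\ra$) is where all the care is needed.

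Finally, for \eqref{divtt2} I would specialize to $n=2$ using two planar identities. The first, $\div S_\v=(\div\v)\v+\v^\perp\curl\v$, follows from \eqref{vdivv} applied without the divergence-free assumption, so that $\div(\v\otimes\v)=(\div\v)\v+\v^\perp\curl\v+\hal\nab|\v|^2$, together with $\div(\hal|\v|^2 I)=\hal\nab|\v|^2$. The second is the gradient-of-dot-product identity $(\v\cdot\nab)j_\ep+(j_\ep\cdot\nab)\v-\nab(j_\ep\cdot\v)=\v^\perp\curl j_\ep+j_\ep^\perp\curl\v=\v^\perp\mu_\ep+j_\ep^\perp\curl\v$. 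Substituting both and regrouping the two terms $\N^2(\div\v)\v$ and $-\N^2\frac{\N\a}{\lep}|u|^2\v\phi$ into $\N^2\v(\div\v-\frac{\N\a}{\lep}|u|^2\phi)$ yields \eqref{divtt2}.
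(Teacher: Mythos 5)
Your proposal is correct and follows essentially the same route as the paper's proof: the same decomposition $\T(u_\ep)=S_\ep(u_\ep)+\N^2 S_\v-\N\(\v\otimes j_\ep+j_\ep\otimes\v-(j_\ep\cdot\v)I\)$, the same completion of the $\a$-part into the gauged inner product via \eqref{divT}, \eqref{divji} and the cancellation of the $\phi$-cross-terms, and the same two planar identities for \eqref{divtt2}. The verification of the cross-term cancellation that you flag as the delicate step checks out exactly as you describe.
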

\begin{proof}
First, a direct computation yields
$$\T(u_\ep)= S_\ep(u_\ep) + \N^2 S_\v -\N ( \v\otimes j_\ep + j_\ep \otimes \v -  (j_\ep \cdot \v) I).$$
Since we  have the following relations for general vector fields $\v $ and $j$:
\begin{equation}\label{divvj}
\div (\v \otimes j ) = j \div \v + (\v \cdot \nab ) j ,\end{equation}
we deduce that 
\begin{multline}\label{divstilde}\div \T(u_\ep)= \div S_\ep(u_\ep) + \N^2 \div S_\v\\
 - \N \(j_\ep \div \v + \v \div j_\ep + (\v \cdot \nab) j_\ep + (j_\ep \cdot \nab ) \v - \nab (j_\ep \cdot \v)\).\end{multline}
On the other hand,
writing $\p_t u_\ep = \p_t u_\ep- iu_\ep \N \phi
+ iu_\ep \N \phi$  and $\nab u_\ep= \nab u_\ep - iu_\ep \N \v+ iu_\ep \N \v$
 yields 
 \begin{multline*}
 \la \p_t u_\ep , \nab u_\ep\ra = \la \p_t u_\ep -iu_\ep \N \phi , \nab u_\ep -iu_\ep \N \v\ra + \N j_\ep \phi \\+ \N \v \la \p_t u_\ep, iu_\ep \ra  -\N^2 |u_\ep|^2 \v \phi  
\end{multline*}
and combining with \eqref{defV}, \eqref{divji} and 
  \eqref{divT}, we find
\begin{multline*}\div S_\ep(u_\ep)=  \frac{\N\a}{\lep}\Big( \la \p_t u_\ep -iu_\ep \N \phi , \nab u_\ep -iu_\ep \N \v\ra + \N j_\ep \phi  -\N^2 |u_\ep|^2 \v \phi  \Big)\\+ \N \v \div j_\ep
-\beta \N^2\v \la \p_t u_\ep,u_\ep\ra
+ \N \frac{\beta}{2}V_\ep.\end{multline*}
Inserting into \eqref{divstilde} yields \eqref{219}.
In the two-dimensional case, we notice that we have the identities 
$$\div S_\v= \v \div \v + \v^\perp \curl \v$$ and 
$$ (\v \cdot \nab) j + (j \cdot \nab ) \v - \nab (j \cdot \v)= j^\perp\curl \v + \v^\perp \curl j, $$ so \eqref{divtt2} follows.



\end{proof}

\subsubsection{Time derivative of the energy}

 Given a Lipschitz compactly supported function $\chi(x)$,  and a sufficiently regular function $\psi(x,t)$  let us  define 
\begin{equation}\label{defER}
\hat\E(u, t)= \hal \int_{\R^n} \chi \(|\nab u-iu\N \v(t)|^2 +\frac{(1-|u|^2)^2}{2\ep^2}+ \N^2  (1-|u|^2 )\psi(x,t) \).
\end{equation}
For simplicity we  will most often write $\hat\E(u_\ep)$  for $\hat \E(u_\ep(t), t)$. 

\begin{lem}\label{lemdte} Let $u_\ep$ solve \eqref{eq} or \eqref{gpr} and $\v$ satisfy  the results of Lemma \ref{lemv}. Then we have 
\begin{align}\label{eqlemdte}
  \frac{d}{dt} \hat\E(u_\ep)
 =&  -\int_{\R^n} \chi  \frac{\N\a}{\lep} |\p_t u_\ep|^2   + \nab \chi  \cdot  \la \nab u_\ep-iu_\ep \N \v , \p_t u_\ep\ra   
\\  \nonumber & + \int_{\R^n} \chi  \( \N^2 \v\cdot \p_t \v  -  \N j_\ep \cdot \p_t \v +\N \la \p_t u_\ep, iu_\ep\ra \div \v- \N V_\ep  \cdot \v\) 
\\  \nonumber &+   \hal \int_{\R^n}   \chi \N^2   \p_t \((1-|u_\ep|^2) (\psi -|\v|^2 ) \).
\end{align}
\end{lem}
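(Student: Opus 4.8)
The plan is to obtain \eqref{eqlemdte} by differentiating $\hat\E(u_\ep(t),t)$ term by term under the integral sign and then rearranging the result using the equation \eqref{eq} (resp. \eqref{gpr}, i.e. $\a=0$, $\b=1$) together with a handful of pointwise identities for the real inner product $\la\cdot,\cdot\ra$. The differentiation under the integral is legitimate thanks to the regularity collected in Lemma \ref{apriori} — in particular $\p_t u_\ep \in L^1([0,\t],L^2(\R^n))$ and the $L^2$ bounds on $\nab u_\ep - iu_\ep\N\v$ and $1-|u_\ep|^2$ — combined with the time-regularity of $\v$, $\pp$ and hence $\psi$, $\phi$ provided by Lemma \ref{lemv}; all integrations by parts will have vanishing boundary contributions since $\chi$ is Lipschitz and compactly supported. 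The identities I will repeatedly invoke are $\la z,iz\ra = 0$, $\la iu_\ep,\p_l u_\ep\ra = (j_\ep)_l$ (from \eqref{defj}), $\la i\p_t u_\ep,\p_l u_\ep\ra = \tfrac12(V_\ep)_l$ (from \eqref{defV}), and $\la\p_t u_\ep,u_\ep\ra = \tfrac12\p_t|u_\ep|^2 = -\tfrac12\p_t(1-|u_\ep|^2)$.

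First I would treat the kinetic term. Its derivative is $\int_{\R^n}\chi\,\la\p_t(\nab u_\ep - iu_\ep\N\v),\,\nab u_\ep - iu_\ep\N\v\ra$, and expanding $\p_t(\nab u_\ep - iu_\ep\N\v) = \nab\p_t u_\ep - i\p_t u_\ep\N\v - iu_\ep\N\p_t\v$ splits this into three pieces. Integrating the $\nab\p_t u_\ep$ piece by parts produces the boundary-type term $-\int\nab\chi\cdot\la\nab u_\ep - iu_\ep\N\v,\p_t u_\ep\ra$ (which is exactly the second term on the first line of \eqref{eqlemdte}, by symmetry of $\la\cdot,\cdot\ra$) together with $-\int\chi\,\la\p_t u_\ep,\div(\nab u_\ep - iu_\ep\N\v)\ra$. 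Writing $\div(\nab u_\ep - iu_\ep\N\v) = \Delta u_\ep - i\N(\v\cdot\nab u_\ep + u_\ep\div\v)$, I would combine the $\Delta u_\ep$ contribution with the derivative of the potential term $\hal\int\chi\,(1-|u_\ep|^2)^2/(2\ep^2)$, which equals $-\int\chi\,\ep^{-2}(1-|u_\ep|^2)\la u_\ep,\p_t u_\ep\ra$. Their sum is $-\int\chi\,\la\p_t u_\ep,\,\Delta u_\ep + \ep^{-2}u_\ep(1-|u_\ep|^2)\ra$, and substituting the equation in the form $\Delta u_\ep + \ep^{-2}u_\ep(1-|u_\ep|^2) = \N(\tfrac{\a}{\lep}+i\b)\p_t u_\ep$ leaves, since $\la\p_t u_\ep,i\p_t u_\ep\ra=0$, precisely the dissipation term $-\int\chi\,\tfrac{\N\a}{\lep}|\p_t u_\ep|^2$.

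The remaining terms are then pure bookkeeping. The $i\N\v\cdot\nab u_\ep$ part of $\div$ and the $-i\p_t u_\ep\N\v$ piece each contribute $-\tfrac12\N V_\ep\cdot\v$ (via the $V_\ep$ identity), adding to $-\N V_\ep\cdot\v$; the $u_\ep\div\v$ part gives $\N\la\p_t u_\ep,iu_\ep\ra\div\v$; and the $-iu_\ep\N\p_t\v$ piece, tested against $\nab u_\ep - iu_\ep\N\v$, yields $-\N j_\ep\cdot\p_t\v$ (via the $j_\ep$ identity) and $\N^2|u_\ep|^2\,\v\cdot\p_t\v$. The $-i\p_t u_\ep\N\v$ piece also leaves the scalar term $\N^2|\v|^2\la\p_t u_\ep,u_\ep\ra = -\tfrac12\N^2|\v|^2\,\p_t(1-|u_\ep|^2)$.

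The one genuinely delicate point — and the step I expect to be the main obstacle for reaching the clean form \eqref{eqlemdte} — is the reorganization that converts $\psi$ into $\psi-|\v|^2$. Here I would split $\N^2|u_\ep|^2\,\v\cdot\p_t\v = \N^2\,\v\cdot\p_t\v - \tfrac12\N^2(1-|u_\ep|^2)\,\p_t|\v|^2$, which exposes the term $\N^2\,\v\cdot\p_t\v$ of the statement and leaves a remainder $-\tfrac12\N^2(1-|u_\ep|^2)\,\p_t|\v|^2$. Combining this remainder with the scalar term $-\tfrac12\N^2|\v|^2\,\p_t(1-|u_\ep|^2)$ reconstitutes, by the product rule, exactly $-\tfrac12\N^2\,\p_t\big((1-|u_\ep|^2)|\v|^2\big)$, which merges with the derivative of the $\psi$ term, $\hal\int\chi\,\N^2\,\p_t\big((1-|u_\ep|^2)\psi\big)$, to give the last line $\hal\int\chi\,\N^2\,\p_t\big((1-|u_\ep|^2)(\psi-|\v|^2)\big)$. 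Collecting the dissipation term, the $\nab\chi$ term, the four terms involving $V_\ep$, $j_\ep$ and $\div\v$, and this last reorganized term yields \eqref{eqlemdte}. The computation is elementary but strongly sign-sensitive, so essentially all the care lies in applying the real-inner-product conventions with the correct signs and in not dropping the $\nab\chi$ contributions from the integrations by parts.
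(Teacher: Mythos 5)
Your proposal is correct and amounts to essentially the same computation as the paper's: both differentiate the modulated energy in time, substitute the equation to produce the dissipation term, and use the definitions of $j_\ep$ and $V_\ep$ together with one integration by parts against $\chi$ to obtain \eqref{eqlemdte}. The only cosmetic difference is that the paper first expands the square in $\hat\E$ and invokes the identity \eqref{dtj} for $\p_t j_\ep$, whereas you differentiate $|\nab u_\ep-iu_\ep\N\v|^2$ directly and let the $\div\v$ and $V_\ep$ terms emerge from $\div(\nab u_\ep-iu_\ep\N\v)$; the signs and the final regrouping into $\psi-|\v|^2$ all check out.
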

\begin{proof}
Since the solution $u_\ep$ is smooth and $\chi$ is compactly supported, expanding the square, we may first  rewrite $\hat\E$ as  
\begin{multline}\label{dever}
\hat\E(u_\ep)= \hal \int_{\R^n} \chi  \( |\nab u_\ep|^2 + \frac{(1-|u_\ep|^2)^2}{2\ep^2} \) \\+ \hal \int_{\R^n}\chi\(      \N^2|\v|^2  + \N^2(1-|u_\ep|^2) ( \psi-   |\v|^2)   \)  -\int_{\R^n} \chi  \N j_\ep\cdot \v.\end{multline}
We then  differentiate in time and obtain 
\begin{multline*}
 \frac{d}{dt} \hat\E(u_\ep)=-  \int_{\R^n} \chi  (\la \p_t u_\ep, \Delta u_\ep +\frac{1}{\ep^2} u_\ep(1-|u_\ep|^2) \ra+\nab \chi  \cdot \la \nab u_\ep, \p_t u_\ep\ra \\ 
+\int_{\R^n} \chi (\N^2 \v \cdot \p_t \v - \N j_\ep \cdot \p_t \v -\N \v \cdot \p_t j_\ep ) 
   +  \hal \int_{\R^n}   \chi \N^2   \p_t \((1-|u_\ep|^2) (\psi -|\v|^2 ) \) .\end{multline*} 
 Inserting \eqref{eq} and \eqref{dtj} and also writing $\nab u_\ep =\nab u_\ep-iu_\ep \N \v+ iu_\ep \N \v$, this becomes
\begin{align*}
  \p_t \hat\E(u_\ep)
 =&  -\int_{\R^n} \chi  \frac{\N\a}{\lep} |\p_t u_\ep|^2   + \nab \chi  \cdot\(  \la \nab u_\ep-iu_\ep \N \v , \p_t u_\ep\ra  +
 \N \v \la \p_t u_\ep, iu_\ep\ra \)  
\\ & + \int_{\R^n} \chi  \( \N^2 \v\cdot \p_t \v  -  \N j_\ep \cdot \p_t \v -\N \v \cdot \nab \la \p_t u_\ep, iu_\ep\ra - \N V_\ep  \cdot \v\) 
\\ &+  \hal \int_{\R^n}   \chi \N^2   \p_t \((1-|u_\ep|^2) (\psi -|\v|^2 ) \).
\end{align*}
With   an integration by parts,  we find that two terms simplify and  we obtain the result.
\end{proof}

\section{The Gross-Pitaevskii case : proof of Theorem \ref{th1}}

In this section, we consider the Gross-Pitaevskii cases, in which $\v$ solves \eqref{lime} with $\alpha=0$ and $\beta=1$ or \eqref{incel},  and for which $\div \v=0$, $\phi=\pp$ and $\psi=\pp-|\v|^2$. Below, we apply the result of Lemma \ref{lemdte} with these choices.
First, we insert the equation solved by $\v$ and the result of Lemma~\ref{divst} to obtain the crucial step in our proof, where all the algebra combines. 
We note that in view of \eqref{vdivv}, 
\eqref{lime} and \eqref{incel} can both be written as 
\begin{equation}
\label{inceln} 
\p_t \v= 2\div S_\v + \nab \pp\end{equation}with the notation \eqref{Sv}.
\begin{lem}\label{dte2}Let $u_\ep$ solve \eqref{gpr}  and $\v$ solve \eqref{lime} or \eqref{incel} according to the dimension.  Then we have 
\begin{align}
\label{compen}
  & \p_t \hat\E(u_\ep)= \int_{\R^n} \chi (2   \T(u_\ep): \nab \v    -\N(1-|u_\ep|^2)\p_t (|\v|^2-\frac{\pp}{2})
 \\  \nonumber
&   -\int_{\R^n} \nab \chi  \cdot\Big(   \la \nab u_\ep-iu_\ep \N \v , \p_t u_\ep \ra +  \N(\N \v - j_\ep)  \pp     -2\T  \v \Big) ,
\end{align}
where for two $2\times 2$ matrices $A $ and $B$, $A:B$ denotes $\sum_{kl} A_{kl} B_{kl}$. 
\end{lem}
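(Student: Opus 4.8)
The plan is to feed into the general identity \eqref{eqlemdte} of Lemma \ref{lemdte} both the equation satisfied by $\v$ and the expression \eqref{219} for $\div \T(u_\ep)$, specialized to the Gross-Pitaevskii values $\a=0$, $\b=1$, $\div\v=0$, $\phi=\pp$, $\psi=\pp-|\v|^2$. With these choices the dissipation term $\frac{\N\a}{\lep}\int\chi|\p_t u_\ep|^2$ and the term $\N\int\chi\la\p_t u_\ep,iu_\ep\ra\div\v$ drop out of \eqref{eqlemdte}, and I would group the first two surviving bulk contributions as $\N(\N\v-j_\ep)\cdot\p_t\v$. Rewriting \eqref{lime} and \eqref{incel} in the common form \eqref{inceln}, $\p_t\v=2\div S_\v+\nab\pp$, I then substitute this for $\p_t\v$, splitting the term into a $\div S_\v$ piece and a $\nab\pp$ piece.

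First I would integrate the $\nab\pp$ piece by parts: using $\div\v=0$ together with \eqref{divji}, which here reads $\div j_\ep=\N\la\p_t u_\ep,u_\ep\ra$ so that $\div(\N\v-j_\ep)=-\hal\N\p_t|u_\ep|^2$, this produces the boundary term $-\int\nab\chi\cdot\N(\N\v-j_\ep)\pp$ of the statement plus an interior term $\hal\N^2\pp\,\p_t|u_\ep|^2$. Next I would invoke \eqref{219}, which for $\a=0$, $\b=1$, $\div\v=0$ expresses $\div\T(u_\ep)$ through $\hal\N V_\ep$, $-\N^2\v\la\p_t u_\ep,u_\ep\ra$, $\N^2\div S_\v$, and the transport combination $(\v\cdot\nab)j_\ep+(j_\ep\cdot\nab)\v-\nab(j_\ep\cdot\v)$. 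The crucial algebraic point is that the $\div S_\v$ piece obtained above and the term $-\N V_\ep\cdot\v$ of \eqref{eqlemdte} assemble, via \eqref{219}, into $-2\v\cdot\div\T(u_\ep)$ modulo a $\p_t|u_\ep|^2$ remainder. This rests on the two identities $\v\cdot\div S_\v=0$ and $\v\cdot\big((\v\cdot\nab)j_\ep+(j_\ep\cdot\nab)\v-\nab(j_\ep\cdot\v)\big)+j_\ep\cdot\div S_\v=0$, both of which follow from $\div\v=0$ and the antisymmetry of $\p_k\v_l-\p_l\v_k$ (in dimension two they reduce to $\div S_\v=\v^\perp\curl\v$ together with $j_\ep\cdot\v^\perp=-\v\cdot j_\ep^\perp$), so that all the compression and transport contributions cancel.

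Finally I would integrate $-2\int\chi\,\v\cdot\div\T(u_\ep)$ by parts, turning it into $2\int\chi\,\T:\nab\v$ plus the boundary term $2\int(\nab\chi\cdot\T)\cdot\v$ --- which is exactly the $-2\T\v$ entry inside the $\nab\chi$-integral of the statement --- while the $\la\nab u_\ep-iu_\ep\N\v,\p_t u_\ep\ra$ boundary term is inherited directly from \eqref{eqlemdte}. It then remains only to collect the $\p_t|u_\ep|^2$ contributions and check that they cancel in pairs: the $|\v|^2\p_t|u_\ep|^2$ terms from $-\N^2\v\la\p_t u_\ep,u_\ep\ra$ and from expanding $\hal\N^2\p_t\big((1-|u_\ep|^2)(\psi-|\v|^2)\big)$ annihilate each other, and the $\pp\,\p_t|u_\ep|^2$ terms from that same expansion and from the $\nab\pp$ integration by parts likewise cancel, leaving precisely the explicit density $(1-|u_\ep|^2)\p_t(|\v|^2-\frac{\pp}{2})$ of the statement. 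I expect this pairwise cancellation to be the main obstacle: each of the transport, $V_\ep$, and $\p_t|u_\ep|^2$ terms is individually only controllable by $\sqrt{\E(u_\ep)}$, and the whole purpose of the computation is that they disappear, so that the right-hand side contains only the genuinely quadratic density $\T:\nab\v$, the harmless explicit term, and the $\nab\chi$ terms supported where $\chi$ is cut off --- this being exactly what later makes Gronwall's inequality applicable.
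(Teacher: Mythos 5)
Your proposal is correct and follows essentially the same route as the paper's proof: insert the rewritten limiting equation $\p_t\v=2\div S_\v+\nab\pp$ into Lemma \ref{lemdte}, pair the resulting $\div S_\v$ terms with $-\N V_\ep\cdot\v$ via the identity \eqref{219} multiplied by $2\chi\v$, use \eqref{vdj} (equivalently your two identities, valid since $\div\v=0$) to cancel the transport terms, integrate the $\nab\pp$ and $\div\T$ contributions by parts, and let the choice $\psi=\pp-|\v|^2$ kill the $\p_t|u_\ep|^2$ terms. The only difference is the order in which the integrations by parts are performed, which is immaterial.
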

\begin{proof}
Starting from the result of Lemma \ref{lemdte}, 
the first  step is to insert \eqref{inceln}, which yields
\begin{equation}\label{31}
 \int_{\R^n} \chi  \( \N^2 \v\cdot \p_t \v  -  \N j_\ep \cdot \p_t \v \)= 
\int_{\R^n}\chi \N (\N  \v -j_\ep) \cdot ( 2\div S_\v + \nab \pp ).\end{equation}
In  order to transform the (first order in the error) term  $(\N \v-j_\ep)\cdot 
\div S_\v$ into a quadratic term, we 
 multiply the result of Lemma~\ref{divst} by $2\chi  \v $, which yields 
\begin{multline}\label{neV}
 - \int_{\R^n} \chi \N V_\ep \cdot \v 
  =-2\int_{\R^n} \chi   \div \T(u_\ep)\cdot  \v
   +2 \int_{\R^n} \chi \N^2 \div S_\v\cdot \v \\-  2 \N \int_{\R^n} \chi \((\v\cdot \nab ) j_\ep +(j_\ep\cdot \nab) \v -\nab (j_\ep \cdot \v)\) \cdot \v     
    - \int_{\R^n}\chi \N^2 |\v|^2  \p_t  |u_\ep|^2. 
\end{multline}But by direct computation, one may check that  for any vector fields $\v $ and $j$ we have 
\begin{equation}\label{vdj}
\((\v\cdot \nab ) j +(j\cdot \nab) \v -\nab (j \cdot \v)\) \cdot \v  = - \div S_\v\cdot j\end{equation}
and  applying to $\v $ and $\v$, we easily deduce that 
$$\div S_\v \cdot \v =0.$$
Therefore, inserting  \eqref{vdj} applied with $\v$ and $j_\ep$,  \eqref{31} and \eqref{neV}  into the result of Lemma \ref{lemdte}, and noticing several cancellations, we obtain 
\begin{align*}
  \frac{d}{dt} \hat\E(u_\ep)
 =&  -\int_{\R^n} \nab \chi  \cdot  \la \nab u_\ep-iu_\ep \N \v , \p_t u_\ep\ra   \\
 & + \int_{\R^n}\chi  (\N(\N \v-j_\ep) \cdot \nab \pp   - 2   \div \T(u_\ep)\cdot  \v)  
  \\   &+ \int_{\R^n} \chi\(  \N^2(  |\v|^2  \p_t (1-|u_\ep|^2)   +\hal \N^2 \p_t \( (1-|u_\ep|^2) (\psi- |\v|^2 )\) \right) . 
\end{align*}
Integrating by parts and using \eqref{divji}, we have 
\begin{equation*}
 \int_{\R^n} \chi \N (\N \v-j_\ep) \cdot \nab \pp= - \int_{\R^n} \N \nab \chi  \cdot (\N \v - j_\ep)  \pp +\int_{\R^n} \N^2\chi   \la \p_t u_\ep , u_\ep\ra \pp.\end{equation*}
Inserting into the previous relation and collecting terms, we are led to 
\begin{align*}
  &\frac{d}{dt}\hat\E(u_\ep)
 =  -\int_{\R^n} \nab \chi  \cdot\(   \la \nab u_\ep-iu_\ep \N \v , \p_t u_\ep\ra +  \N(\N \v - j_\ep)  \pp\)
\\  & -2\int_{\R^n} \chi  \div \T(u_\ep)\cdot  \v \\
  &+ \int_{\R^n} \chi  \N^2\(    \p_t (1-|u_\ep|^2) \( |\v|^2 -  \hal  \pp\)+\hal \p_t \((1-|u_\ep|^2) (\psi- |\v|^2 ) \)\) . 
\end{align*}
Since we have chosen $\psi= \pp-|\v|^2$ we see that the terms involving $\p_t (1-|u_\ep|^2)$ cancel and we get the conclusion.
\end{proof}
We may check that all the terms in factor of $\chi$ or $\nab \chi $ in \eqref{compen}  are in $L^1([0,t]\times\R^n)$, thanks to  \eqref{borneS}, Lemmas  \ref{lemv} and \ref{apriori}. 
Inserting  for $\chi$ in  \eqref{compen} a sequence $\{\chi_k\}_k$ of functions bounded in $C^1(\R^n)$ such that $\chi_k \to 1$ and $\|\nab \chi_k\|_{L^\infty(\R^n)} \to 0$, we  may thus integrate \eqref{compen} in time and, using the finiteness of $\E(u_\ep(t),t)$ given by Lemma \ref{apriori},    let $k \to \infty$ to  obtain by dominated convergence, 
\begin{multline*}
   \E(u_\ep(t),t)- \E(u_\ep^0,0) 
     = \int_0^t  \int_{\R^n}   2 \T(u_\ep): \nab \v -  \N^2 (1-|u_\ep|^2) \p_t ( |\v|^2 -  \frac{  \pp}{2}).
\end{multline*}
Inserting \eqref{bornints}, the bounds given by Lemma \ref{lemv}, and using the Cauchy-Schwarz inequality to control $(1-|u_\ep|^2)$ by $\E$ in view of \eqref{contmod}, we arrive at 
\begin{align*}
\lefteqn{
\E(u_\ep(t),t)-\E(u_\ep^0,0)  
} \qquad & \\
& \le C\int_0^t \E(u_\ep) + C \N^2 \int_0^t \( \N^2 \ep^2+ \ep \sqrt{\E(u_\ep)+C\ep^2 \N^4}\) \\
& \le C\int_0^t \E(u_\ep) + C t \ep^2 \N^4,
\end{align*}where $C$ depends only on the bounds on $\v$.
Applying Gronwall's lemma and using \eqref{condN1}, we finally obtain that 
$$\E(u_\ep(t),t) \le C_t\( \E(u_\ep^0,0) +o(\N^2)\)$$
which is $o(\N^2)$ with the initial data assumption \eqref{eu0}. The conclusions of the theorem follow in view of either \eqref{contjl1} or \eqref{contj}.

\section{The parabolic  case}

In this section, we turn to the proof of Theorem~\ref{th2}, which only concerns the dimension $n=2$.  Throughout we assume that $\a=1$, $\b=0$ and \eqref{condN2} holds.

In the rest of the paper,  we  let $\chi_R$ be 
\begin{equation}\label{defchir}
\chi_R(x) =\begin{cases} & \frac{\log |x|-\log R^2}{\log R- \log R^2} \text{ for }  R\le |x|\le R^2\\
& \chi_R(x)=1  \text{ for }  |x|\le R\\
& \chi_R (x)=0  \text{ for } |x|\ge R^2.\end{cases}\end{equation}
With this choice we have 
\begin{equation}\label{condchir}
 \|\nab \chi_R\|_{L^\infty(\R^2)} \to 0  \qquad \|\nab \chi_R\|_{L^2(\R^2) } \to 0 \quad \text{as } \ R \to \infty.\end{equation}

\subsection{A priori bound on the velocity}
We define  $\te$ as the maximum time $t\le \min(1,\t)$ (where $\t>0$ is the time of existence of the solution to the limiting equation) such that 
\begin{equation}\label{bap}
\E(u_\ep(t)) \le \pi \N \lep + \N^2 \quad \text{ for all } \  t \le \te.\end{equation}
Our goal is  to  show that $\E(u_\ep(t))\le \pi \N \lep +o(\N^2)$ for all $t\le \te$, which will imply that $\te=\min(1,\t)$. 

Let us start with a crude a priori bound on the time derivative of $u_\ep$. 
\begin{lem}\label{aprioridtu}
 Assume  \eqref{condN2} and $\E(u_\ep^0) \le \pi \N \lep + o(\N^2)$. Assume $\v$ satisfies the results of Lemma \ref{lemv}. Then 
$$\int_0^{\te} \i|\p_t u_\ep|^2 \le   C \N^3\lep^3    ,$$ where $C$ depends only on the bounds on $\v$.
\end{lem}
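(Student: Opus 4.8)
The plan is to exploit the fact that for $\a=1$, $\b=0$ the flow \eqref{eq} is the $L^2$ gradient flow of the Ginzburg--Landau energy, so that $\i|\p_t u_\ep|^2$ is essentially the energy dissipation. Concretely, I would apply Lemma \ref{lemdte} with $\a=1$, $\b=0$, with the cut-off $\chi=\chi_R$ of \eqref{defchir}, and with $\phi$ as in \eqref{defphi} and $\psi=-|\v|^2$ as prescribed in the parabolic case by \eqref{defpsi}. The term $-\int_{\R^2}\chi_R\frac{\N}{\lep}|\p_t u_\ep|^2$ then appears on the right-hand side with the favourable sign, so that after rearranging,
$$\frac{\N}{\lep}\int_{\R^2}\chi_R|\p_t u_\ep|^2 = -\frac{d}{dt}\hat\E(u_\ep)+ (\text{remainder}),$$
and I would integrate this over $[0,\te]$ and let $R\to\infty$.

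Before estimating, I would carry out the algebraic cancellations that make the remainder manageable. Writing $j_\ep=\N\v+(j_\ep-\N\v)$ collapses the pair $\N^2\v\cdot\p_t\v-\N j_\ep\cdot\p_t\v$ into the single term $-\N(j_\ep-\N\v)\cdot\p_t\v$; and expanding $V_\ep\cdot\v$ via $\nab u_\ep=(\nab u_\ep-iu_\ep\N\v)+iu_\ep\N\v$ produces a piece $\N^2|\v|^2\la u_\ep,\p_t u_\ep\ra$ that cancels exactly against the corresponding piece coming from $\hal\N^2\p_t\big((1-|u_\ep|^2)(\psi-|\v|^2)\big)$ once $\psi=-|\v|^2$ is inserted. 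After these cancellations the remainder contains only: the boundary term in $\nab\chi_R$; the two terms linear in $\p_t u_\ep$, namely $\N\la\p_t u_\ep,iu_\ep\ra\div\v$ and $-2\N\la i\p_t u_\ep,\nab u_\ep-iu_\ep\N\v\ra\cdot\v$; and the two terms free of $\p_t u_\ep$, namely $-\N(j_\ep-\N\v)\cdot\p_t\v$ and $-\N^2(1-|u_\ep|^2)\p_t|\v|^2$.

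The two terms linear in $\p_t u_\ep$ I would absorb by Young's inequality, keeping a fixed fraction of the dissipation on the left: each contributes $\tfrac{\N}{8\lep}|\p_t u_\ep|^2$ plus $C\lep\N$ times $|u_\ep|^2|\div\v|^2$, respectively $|\nab u_\ep-iu_\ep\N\v|^2|\v|^2$. The remaining terms I would bound, integrated over $[0,\te]\times\R^2$, using the a priori bound \eqref{bap} (which by $\N\le O(\lep)$ reads $\E\le C\N\lep$), the coerciveness Lemma \ref{contrmod}, the regularity of $\v$ from Lemma \ref{lemv}, and $\te\le1$. The nontrivial point is the term $\N(j_\ep-\N\v)\cdot\p_t\v$: since no $L^\infty$ bound on $u_\ep$ is assumed, I would not invoke \eqref{contj} but instead use the pointwise decomposition \eqref{pjj}, which exhibits $j_\ep-\N\v$ as an $L^2$ part (of norm $\le C\sqrt{\N\lep}$) plus an $L^1$ part (of norm $\le C\ep\N\lep$), pairing the former with $\p_t\v\in L^2$ and the latter with $\p_t\v\in L^\infty$. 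The boundary term in $\nab\chi_R$ vanishes as $R\to\infty$ by \eqref{condchir} together with the integrability $\p_t u_\ep\in L^1([0,\te],L^2)$ and $\nab u_\ep-iu_\ep\N\v\in L^\infty([0,\te],L^2)$ from Lemma \ref{apriori}.

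Collecting the estimates, the remainder is dominated by the $V_\ep$-absorption contribution $C\lep\N\,\|\v\|_{L^\infty}^2\int_0^{\te}\i|\nab u_\ep-iu_\ep\N\v|^2\le C\N^2\lep^2$, so that $\frac{\N}{\lep}\int_0^{\te}\i|\p_t u_\ep|^2\le C\N^2\lep^2$, which yields the claimed bound (in fact with room to spare, consistent with its being called crude). The main obstacle I anticipate is organizational rather than conceptual: making sure the cancellations are exactly right so that no term of order $\sqrt{\E}$ survives un-absorbed, and controlling the $j_\ep$ term without an $L^\infty$ bound on $u_\ep$, which the decomposition \eqref{pjj} resolves. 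A secondary subtlety is that the Young absorption presupposes $\int_0^{\te}\i|\p_t u_\ep|^2<\infty$; this I would justify a priori from the gradient-flow structure (the renormalized Ginzburg--Landau energy is a Lyapunov functional, giving a finite dissipation) or by a standard truncation argument, before running the quantitative estimate.
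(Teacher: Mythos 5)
Your proposal follows essentially the same route as the paper's proof: apply Lemma \ref{lemdte} with $\psi=-|\v|^2$ and $\chi=\chi_R$, use exactly the cancellation you describe between the $|\v|^2\p_t|u_\ep|^2$ piece of $V_\ep\cdot\v$ and the $\p_t\big((1-|u_\ep|^2)\psi\big)$ term, let $R\to\infty$, and control the same remainder via \eqref{pjj}, \eqref{bap}, Lemma \ref{contrmod} and Lemma \ref{lemv} with $\te\le 1$. The only (cosmetic) difference is the final bookkeeping: instead of absorbing the terms linear in $\p_t u_\ep$ by Young's inequality --- which, as you correctly flag, presupposes $\int_0^{\te}\i|\p_t u_\ep|^2<\infty$ --- the paper keeps $\|\p_t u_\ep\|_{L^2}$ to the first power, so the right-hand side is finite by the $L^1([0,\te],L^2)$ bound of Lemma \ref{apriori}, and then closes the resulting quadratic inequality in $\big(\int_0^{\te}\i|\p_t u_\ep|^2\big)^{1/2}$ by Cauchy--Schwarz in time.
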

\begin{proof}
Let us return to the result of Lemma \ref{lemdte}  with  the choice $\psi=- |\v|^2$  
and $\chi=\chi_R$  as in \eqref{defchir}.  By \eqref{defV}, we have $V_\ep= 2\la i \p_t u_\ep, \nab u_\ep\ra = 2\la i\p_t u_\ep, \nab u_\ep -iu_\ep \N \v\ra + 2\N \v \la \p_t u_\ep, u_\ep\ra$, and inserting 
  into the result of Lemma \ref{lemdte}, we obtain  
\begin{multline*}
\p_t \hat \E (u_\ep) = 
-\i \chi_R  \frac{\N}{\lep} |\p_t u_\ep|^2 - \i \nab \chi_R  \cdot \la \nab u_\ep-iu_\ep \N\v, \p_t u_\ep\ra \\+ \i \chi_R\(-\N^2 (1-|u_\ep|^2) \p_t |\v|^2 + \N\la  \p_t u_\ep ,iu_\ep\ra \div \v \right. \\ \left. +\N(\N\v-j_\ep )\cdot \p_t \v
+ 2 \N  \la  i \p_t u_\ep, \nab u_\ep -iu_\ep \N \v \ra  \cdot  \v\).
\end{multline*}We next observe that again all the terms in factor of $\chi_R $ or $\nab \chi_R$ are in $L^1(\R^2)$ thanks to Lemmas \ref{lemv} and \ref{apriori}. We may then integrate in time and let $R \to \infty$ to obtain 
\begin{align*}
\lefteqn{ 
\int_0^{\te} \i  \frac{\N}{\lep} |\p_t u_\ep|^2 
} \qquad & \\
& = \E(u_\ep^0)- \E(u_\ep(\te)) 
+ \int_0^{\te} \i \N \la  \p_t u_\ep,iu_\ep \ra \div \v + \N (\N \v-j_\ep) \cdot  \p_t \v \\
& \qquad + 2\N \la i \p_t u_\ep, \nab u_\ep -iu_\ep \N \v \ra  \cdot  \v+o(1),
\end{align*}
where we also used \eqref{condN2} and \eqref{bap} to control all the terms containing $(1-|u_\ep|^2)$.  Next we  insert \eqref{pjj}  to obtain 
\begin{align*}
& \int_0^{\te} \i  \frac{\N}{\lep} |\p_t u_\ep|^2 
 \le  \E(u_\ep^0) \\ & 
+ \int_0^{\te} \i\Big( \N  | \p_t u_\ep | |\div \v|  + |1-|u_\ep|| |\p_t u_\ep| |\div \v|+  \N |\nab u_\ep -iu_\ep \N \v|| \p_t \v |\\ & \qquad + \N |1-|u_\ep||  |\nab u_\ep- iu_\ep \N \v||\p_t \v| + \N^2 |1-|u_\ep|^2||\v| |\p_t \v|\\ & \qquad + 2\N |\p_t u_\ep| | \nab u_\ep -iu_\ep \N \v ||  \v|\Big)+o(1).
\end{align*}
Using  $|1-|u_\ep||\le |1-|u_\ep|^2|$, the Cauchy-Schwarz inequality,    \eqref{eu02}, \eqref{bap}, the $L^\infty \cap L^2 $ character of $\div \v $ and $\p_t \v$ given by  Lemma \ref{lemv}, the boundedness of $\v$ and  Lemma   \ref{contrmod}, we deduce that 
\begin{multline*}
\int_0^{\te} \i \frac{\N}{\lep} |\p_t u_\ep|^2 \\ \le   \pi \N \lep + C\N \int_0^{\te}(1+ \|\p_t u_\ep\|_{L^2} )(1+ \E(u_\ep ))\, dt +o(\N^2).\end{multline*}
Using \eqref{bap}, bounding $\te$ by $1$ and using \eqref{condN2}, we easily deduce the result. 
\end{proof}

\subsection{Preliminaries: ball construction and product estimate}
The proof in the parabolic case is more involved than in the Gross-Pitaevskii case, and in particular it requires all the machinery to study vortices which has been developed over the years. Indeed, as explained in the introduction, we will need to subtract off the (now leading order) contribution of the vortices to the energy.  This will be done via the ball-construction method (introduced in \cite{sa,j}) coupled with the ``Jacobian estimates" \cite{js} (with precursors in  \cite{br,ss1}). Here we will need a lower bound with smaller errors, as in \cite[Theorems 4.1 and 6.1]{livre}, coupled to an improvement due to \cite{stice1}.

\subsubsection{Vorticity to modulated vorticity} 
Before doing so, we will need the following result which connects the vorticity to the modulated vorticity.
\begin{lem}\label{mvor}Assume $\v $ satisfies the results of Lemma \ref{lemv} and $u_\ep$ is such that $\E(u_\ep)<\infty$. 
Let $\mu_\ep = \curl j_\ep$ and $\tilde \mu_\ep= \curl (\la \nab u_\ep -iu_\ep \N \v,iu_\ep\ra+ \N \v)$ as in \eqref{tildemu}.
We have that $\tilde \mu_\ep \in L^1(\R^2)$ with 
\begin{equation}\label{tmu}
|\tilde \mu_\ep |\le 2 |\nab u_\ep -iu_\ep \N \v|^2+ |1-|u_\ep|^2| |\curl \v|\end{equation} 
and \begin{equation}\label{intmun}
\i \tilde\mu_\ep =2\pi \N.\end{equation}
Moreover, for any $\xi\in H^1(\R^2)$, we have 
\begin{equation}\label{mumu}
\left|\i \xi(\mu_\ep - \tilde \mu_\ep) \right|\le C \|\nab \xi\|_{L^2(\R^2)} \N(\ep \sqrt{\E(u_\ep)}+ C \ep^2 \N^2).
\end{equation}
\end{lem}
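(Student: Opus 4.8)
The plan is to reduce the whole lemma to a single algebraic identity for the field inside the curl. First I would expand it componentwise, using $\la iu_\ep\N\v_k, iu_\ep\ra = \N|u_\ep|^2\v_k$ together with $j_\ep=\la\nab u_\ep,iu_\ep\ra$, to obtain
\begin{equation*}
\la\nab u_\ep-iu_\ep\N\v,iu_\ep\ra+\N\v=j_\ep+\N(1-|u_\ep|^2)\v,
\end{equation*}
and hence, taking the curl,
\begin{equation*}
\tilde\mu_\ep=\mu_\ep+\N\curl\big((1-|u_\ep|^2)\v\big).
\end{equation*}
This relation is what drives \eqref{mumu}; the pointwise bound and the mass then come from rewriting it in ``gauge-invariant'' form.

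For \eqref{tmu} and the $L^1$ claim I would \emph{not} keep $\mu_\ep$ explicit, since it carries the singular vortex mass, but instead treat $\N\v$ as a gauge field. Writing $D_\ep:=\nab u_\ep-iu_\ep\N\v$ with components $D_{\ep,k}=\p_k u_\ep-iu_\ep\N\v_k$, a direct computation (expanding the curl of $j_\ep+\N(1-|u_\ep|^2)\v$ and using $\la\p_k u_\ep,u_\ep\ra=\hal\p_k|u_\ep|^2$) gives the covariant Jacobian identity
\begin{equation*}
\tilde\mu_\ep=-2\la D_{\ep,1},iD_{\ep,2}\ra+(1-|u_\ep|^2)\curl(\N\v).
\end{equation*}
Bounding the first term by $2|D_{\ep,1}||D_{\ep,2}|\le 2|D_\ep|^2$ and the second pointwise yields \eqref{tmu}. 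Integrability is then immediate: $\i|D_\ep|^2\le 2\E(u_\ep)+2\ep^2\N^4\|\psi\|_{L^2}^2<\infty$ by \eqref{contmod}, while $\i|1-|u_\ep|^2|\,|\curl\v|\le\|1-|u_\ep|^2\|_{L^2}\|\curl\v\|_{L^2}<\infty$ since $1-|u_\ep|^2\in L^2$ and $\curl\v\in L^1\cap L^\infty\subset L^2$ by Lemma \ref{lemv}; hence $\tilde\mu_\ep\in L^1(\R^2)$.

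For the total mass \eqref{intmun} I would integrate over balls and apply Stokes to $j_\ep+\N(1-|u_\ep|^2)\v$. Since $\tilde\mu_\ep\in L^1$, $\int_{B_R}\tilde\mu_\ep\to\i\tilde\mu_\ep$ for every $R\to\infty$, while the boundary circulation $\oint_{\p B_R}(j_\ep+\N(1-|u_\ep|^2)\v)\cdot\tau$ can be evaluated along a sequence $R_k\to\infty$ on which $u_\ep\to U_{\N}$ on $\p B_{R_k}$ (possible because $u_\ep-U_{\N}\in H^1$): there the $(1-|u_\ep|^2)$ contribution drops out, and the circulation of $j_\ep$ converges to $2\pi$ times the degree $\N$ of the reference map. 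Combining the two limits gives $\i\tilde\mu_\ep=2\pi\N$.

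Finally, for \eqref{mumu} I would use the first identity as $\mu_\ep-\tilde\mu_\ep=-\N\curl((1-|u_\ep|^2)\v)$, which lies in $H^{-1}(\R^2)$ because $(1-|u_\ep|^2)\v\in L^2$ (as $\v\in L^\infty$ and $1-|u_\ep|^2\in L^2$). Pairing with $\xi\in H^1$ and integrating by parts,
\begin{equation*}
\i\xi(\mu_\ep-\tilde\mu_\ep)=\N\i\np\xi\cdot(1-|u_\ep|^2)\v,
\end{equation*}
so $\big|\i\xi(\mu_\ep-\tilde\mu_\ep)\big|\le\N\|\nab\xi\|_{L^2}\|\v\|_{L^\infty}\|1-|u_\ep|^2\|_{L^2}$, and \eqref{contmod} gives $\|1-|u_\ep|^2\|_{L^2}\le C(\ep\sqrt{\E(u_\ep)}+\ep^2\N^2)$ (using that $\|\psi\|_{L^2}$ is bounded by Lemma \ref{lemv}), which is exactly \eqref{mumu}. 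The genuinely delicate points are establishing the covariant Jacobian identity with the correct cancellation of the singular part of $\mu_\ep$ (the main obstacle, though it is a finite algebraic computation), and justifying the absence of boundary terms at infinity in the mass argument; both rest only on the decay of $u_\ep-U_{\N}$ and of $\v$ (like $1/|x|$) on $\R^2$, not on any new estimate.
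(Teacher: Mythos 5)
Your proof is correct, and for \eqref{tmu}, the $L^1$ claim and \eqref{mumu} it coincides with the paper's: the identity $\tilde\mu_\ep-\mu_\ep=\curl\big((1-|u_\ep|^2)\N\v\big)$ is exactly what the paper uses for \eqref{mumu}, and your covariant Jacobian identity is the ``direct computation'' the paper leaves implicit here and records as \eqref{calmu} in the next lemma (with $A_\ep=\N\v$). Note in passing that this identity yields $\N\,|1-|u_\ep|^2|\,|\curl\v|$ as the second term, so \eqref{tmu} as printed is missing a factor $\N$; your version is the correct one and the discrepancy is harmless everywhere the bound is used. The one point where you take a genuinely different route is \eqref{intmun}. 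The paper tests $\tilde\mu_\ep$ against the logarithmic cutoff $\chi_R$, integrates by parts onto $\np\chi_R$, and uses $\|\nab\chi_R\|_{L^2}\to 0$ together with the $L^2$ character of $\nab u_\ep-iu_\ep\N\v$ and of $\v-\la\nab U_1,iU_1\ra$ to reduce everything to $\i\N\curl\la\nab U_1,iU_1\ra=2\pi\N$; the mass is thus read off from the behavior of $\v$ at infinity, using only $\E(u_\ep)<\infty$ and Lemma \ref{lemv}, which is all the lemma assumes. You instead apply Stokes on a good sequence of circles and attribute the mass to the circulation of $j_\ep$, i.e.\ to the degree of $u_\ep$ at infinity; this is valid but requires the extra input $u_\ep-U_{\N}\in H^1(\R^2)$, which is not among the stated hypotheses of the lemma (it is of course available wherever the lemma is applied, by the well-posedness setup). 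Both arguments work; the paper's is slightly more economical in hypotheses and avoids the trace-selection step on circles, while yours makes transparent that $2\pi\N$ is the common circulation at infinity of $j_\ep$ and of $\N\v$.
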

\begin{proof} First, a direct computation gives that \eqref{tmu} holds, and it follows immediately with Lemmas \ref{lemv} and  \ref{apriori} that $\tilde \mu_\ep \in L^1(\R^2)$.  We may then write, with $\chi_R$ as in \eqref{defchir}  
\begin{multline*}\i \tilde \mu_\ep = \lim_{R \to \infty} \i \chi_R \tilde \mu_\ep\\= -\lim_{R\to \infty} \i \np \chi_R\cdot (\la \nab u_\ep -iu_\ep \N \v, iu_\ep\ra +  \N (\v- \la \nab U_1, iU_1\ra+ \la \nab U_1, iU_1\ra) .\end{multline*}  
Since  $\nab u_\ep-iu_\ep \N \v$ and $\v -\la \nab U_1, iU_1\ra$ are in $L^2$ by Lemmas \ref{lemv} and \ref{apriori}, the corresponding terms tend to $0$ as $R \to \infty$. There remains 
\begin{multline*}\i \tilde \mu_\ep =- \lim_{R \to \infty}  \i \N \np \chi_R \cdot  \la \nab U_1, iU_1\ra\\ = \lim_{R \to \infty} \i \N \chi_R \curl \la \nab U_1, iU_1 \ra= 2\pi \N\end{multline*} by choice of $U_1$. 
For \eqref{mumu} we observe that by a direct computation, we have
$\tilde \mu_\ep - \mu_\ep = \curl ((1-|u_\ep|^2) \N \v)$. Thus,  for any $\xi \in H^1(\R^2)$ we have
\begin{align*}
\left|\i \xi(\mu_\ep - \tilde \mu_\ep) \right| 
& = \left |\N \i (1-|u_\ep|^2) \np \xi \cdot \v\right| \\
& \le   \N\|\nab \xi\|_{L^2(\R^2)} (C\ep \sqrt{\E(u_\ep)+  \ep^2 \N^4}),
\end{align*} 
where we used Lemma \ref{contrmod}.\end{proof}

\subsubsection{Jacobian estimate for unbounded domains}
 The next lemma is an infinite domain version of the estimate of \cite[Theorem 6.1]{livre}. For this lemma, we temporarily use the  notation $\mu_\ep$ with a slightly different meaning. 
 \begin{lem}\label{lemjac}
 Let $\Omega $ be an open subset of $\R^2$, and let $\Omega^\ep= \{x \in \Omega, \dist(x, \p \Omega) >\ep\}$.   Let $u_\ep: \Omega \to \mathbb{C}$ and $A_\ep : \Omega \to \R^2$. 
   Assume that $\{B_i\}_i $ is a finite  collection of disjoint closed balls of centers $a_i$ and radii $r_i$  covering $\{| |u_\ep|-1|\ge\hal \}\cap \Omega^\ep$, and let $d_i = \deg(u_\ep, \p B_i) $ if $B_i \subset \Omega^\ep$ and $d_i=0$ otherwise.
  Then, setting 
  $$\mu_\ep = \curl (\la \nab u_\ep -iA_\ep u_\ep, iu_\ep\ra + A_\ep), $$
   we have, for any $\xi \in C^{0,1}_c(\Omega)$, 
  \begin{multline}\label{bornJ}
\left|  \int_\Omega \xi (\mu_\ep - 2\pi \sum_i d_i \delta_{a_i}  ) \right|\\
\le C \Big( \sum_i r_i+\ep\Big)    \|\xi\|_{C^{0,1}(\Omega)} \int_{\Omega}  2 |\nab u_\ep -iu_\ep A_\ep|^2+|1-|u_\ep|^2| |\curl A_\ep|  + \frac{(1-|u_\ep|^2)^2}{2\ep^2},
\end{multline} where $C$ is universal.
\end{lem}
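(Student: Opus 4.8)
The plan is to follow the bounded-domain Jacobian estimate \cite[Theorem 6.1]{livre} (with the sharpening of \cite{stice1}), the one genuinely new feature being that $\Omega$ is unbounded. Since $\xi\in C^{0,1}_c(\Omega)$, its support $K:=\supp\xi$ is a fixed compact subset of $\Omega$, so in the relevant regime $\sum_i r_i,\ep\to 0$ we have $K\subset\Omega^\ep$ and every ball $B_i$ meeting $K$ lies in $\Omega^\ep$, so that the degrees $d_i$ are well defined; moreover every integration by parts performed below produces no contribution from $\p\Omega$ or from infinity. Consequently only the finitely many $B_i$ meeting $K$ enter the left-hand side, while keeping the energy integral on the right over all of $\Omega$ only weakens the inequality. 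In this way the statement is reduced to a computation on the bounded set $K$, identical to the one in the bounded-domain case.

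For that computation I would first write $\mu_\ep=\curl F$ with $F:=\la \nab u_\ep-iA_\ep u_\ep, iu_\ep\ra+A_\ep$, which is a smooth vector field (there is no division by $|u_\ep|$, hence no singularity). Wherever $u_\ep\ne0$, writing $u_\ep=|u_\ep|e^{i\phi}$ gives $F=|u_\ep|^2\nab\phi+(1-|u_\ep|^2)A_\ep$. I would then split $\int_\Omega\xi\mu_\ep$ into the contribution of the \emph{good set} $G:=K\setminus\bigcup_i B_i$, on which $|u_\ep|>\hal$ because the $B_i$ cover $\{\,||u_\ep|-1|\ge\hal\,\}\cap\Omega^\ep$, and the contributions of the balls $B_i$.

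On $G$ the phase $\phi$ is locally defined, $\nab\phi$ is single-valued and $\curl\nab\phi=0$, so there $\mu_\ep=\curl\big((|u_\ep|^2-1)\nab\phi\big)+\curl\big((1-|u_\ep|^2)A_\ep\big)$. Integrating against $\xi$ and moving one derivative onto $\xi$ produces $\int_G\np\xi\cdot(1-|u_\ep|^2)(\nab\phi-A_\ep)$ plus the gauge curl term; using $|\nab\phi-A_\ep|\le C|\nab u_\ep-iu_\ep A_\ep|$ on $G$ and Cauchy--Schwarz, together with $\big(\int_G(1-|u_\ep|^2)^2\big)^{1/2}\le\ep\big(\int_G\frac{(1-|u_\ep|^2)^2}{\ep^2}\big)^{1/2}$, bounds this by $C\ep\,\|\nab\xi\|_{L^\infty}$ times the energy, which yields the $\ep$ part of the prefactor. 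On each ball I would replace $\xi$ by the constant $\xi(a_i)$: the error is at most $r_i\|\nab\xi\|_{L^\infty}\int_{B_i}|\mu_\ep|$, and the pointwise bound $|\mu_\ep|\le 2|\nab u_\ep-iu_\ep A_\ep|^2+|1-|u_\ep|^2|\,|\curl A_\ep|$ makes $\int_{B_i}|\mu_\ep|$ controlled by the energy in $B_i$; summing gives the $(\sum_i r_i)$ part of the prefactor and, in particular, is where the $|1-|u_\ep|^2|\,|\curl A_\ep|$ energy term acquires its small factor. The remaining main term is $\xi(a_i)\int_{B_i}\mu_\ep=\xi(a_i)\oint_{\partial B_i}F\cdot\tau$, which equals $2\pi d_i\,\xi(a_i)$ up to the boundary correction $\oint_{\partial B_i}(1-|u_\ep|^2)(\nab\phi-A_\ep)\cdot\tau$.

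The main obstacle is exactly the control of these boundary corrections: they are one-dimensional integrals over the \emph{prescribed} circles $\partial B_i$, on which $\nab\phi$ may be large, and which are therefore not directly dominated by the two-dimensional energy. As in the bounded-domain argument I would overcome this by a co-area/Fubini selection of good radii inside a thin annulus around each $\partial B_i$ on which the energy density and $|u_\ep|-1$ are controlled, evaluating the degree $d_i$ on such a good circle (the degree being unchanged since no zero of $u_\ep$ lies in the annulus); the small displacement of the circles is absorbed into the $\sum_i r_i$ error. Collecting the three types of error terms --- the good-set term carrying $\ep$, and the two ball terms carrying $\sum_i r_i$ --- then gives the asserted bound with prefactor $C(\sum_i r_i+\ep)\|\xi\|_{C^{0,1}(\Omega)}$.
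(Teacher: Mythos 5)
Your overall architecture (splitting into the good set and the balls, extracting the $\ep$ factor from a Cauchy--Schwarz estimate on $(1-|u_\ep|^2)(\nab\phi-A_\ep)$, and extracting the $\sum_i r_i$ factor from the pointwise bound $|\mu_\ep|\le 2|\nab u_\ep-iu_\ep A_\ep|^2+|1-|u_\ep|^2|\,|\curl A_\ep|$ on the balls) matches the paper's, but the step you yourself flag as the main obstacle --- the one-dimensional boundary integrals $\oint_{\p B_i}(1-|u_\ep|^2)(\nab\phi-A_\ep)\cdot\tau$ --- is not repaired by your proposed fix, and this is a genuine gap. A co-area/Fubini selection of a good circle necessarily moves the circle to a different radius, and the hypotheses place no restriction on where the zeros of $u_\ep$ sit inside $B_i$ (the balls merely cover the bad set, so all of $B_i$ may consist of bad set), nor on the separation between distinct balls (they are only assumed disjoint). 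Shrinking the circle may therefore strand zeros of $u_\ep$ in the intermediate annulus and change the degree; enlarging it may capture part of a neighboring ball $B_j$ and its vortices. In either case you prove the estimate with $d_i$ replaced by the degree $d_i'$ on the selected circle, and the discrepancy $2\pi(d_i-d_i')\xi(a_i)$ cannot be absorbed into a right-hand side carrying the small prefactor $\sum_i r_i+\ep$. Your parenthetical ``the degree being unchanged since no zero of $u_\ep$ lies in the annulus'' is precisely the unproved assertion.

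The paper's proof is engineered so that no one-dimensional integral ever needs to be estimated. Following \cite[Chap.~6]{livre}, one replaces $u_\ep$ by $v_\ep=\frac{\chi(|u_\ep|)}{|u_\ep|}u_\ep$, where $\chi$ truncates the modulus to $1$ on $[\hal,\frac{3}{2}]$; then $|v_\ep|\equiv 1$ outside $\cup_i B_i$, so the modified vorticity $\hat\mu_\ep=\curl(\la\nab v_\ep-iA_\ep v_\ep,iv_\ep\ra+A_\ep)$ vanishes identically off the balls (hence there is no good-set term and no boundary correction at all), satisfies the exact topological identity $\int_{B_i}\hat\mu_\ep=2\pi d_i$, and is compared to $\mu_\ep$ through the purely two-dimensional $L^1$ bound $\|\la\nab u_\ep-iA_\ep u_\ep,iu_\ep\ra-\la\nab v_\ep-iA_\ep v_\ep,iv_\ep\ra\|_{L^1}\le 3\|1-|u_\ep|\|_{L^2}\|\nab u_\ep-iA_\ep u_\ep\|_{L^2}$, which after one integration by parts against $\np\xi$ yields the $\ep$ part of the prefactor. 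If you wish to keep your direct computation with the phase, you need to import this truncation device (or an equivalent way of making the current identically equal to $\nab\phi$ on $\p B_i$) to kill the circle integrals; as written, your argument does not close.
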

\begin{proof} As in \cite[Chap. 6]{livre}, we set 
  $\chi:\R_+\to \R_+$ to be defined by 
$$\left\{\begin{array}{ll}\chi(x)=2x & \text{if} \ x\in [0,\hal]\\
\chi(x)=1 & \text{if} \ x\in [\hal,\frac{3}{2}]\\
\chi(x)=1+2(x-3/2)& \text{if} \ x\in [\frac{3}{2},2]\\
\chi(x)=x& \text{if} \ x\ge 2.\end{array}\right.
$$ We then let $v_\ep(x)= \frac{\chi(|u_\ep|)}{|u_\ep|} u_\ep$. This is clearly well-defined, with $|v_\ep|=1$ outside of $\cup_i B_i$ and 
from  \cite[Chap. 6]{livre} or direct calculations,  we know that 
\begin{multline}\label{diffmd0}
\| \la \nab  u_\ep -iA_\ep u_\ep ,iu_\ep\ra -\la \nab  v_\ep-i A_\ep v_\ep,iv_\ep\ra\|_{L^1(\Omega)}\\ \le 3\|1-|u_\ep|\|_{L^2(\Omega)} \|\nab u_\ep-i A_\ep u_\ep\|_{L^2(\Omega)},\end{multline}  
and 
\begin{equation}\label{diffmd20}
|1-|v_\ep||\le |1-|u_\ep||, \qquad |\nab v_\ep - iA_\ep v_\ep |\le 2|\nab u_\ep -iA_\ep u_\ep|.
\end{equation}
Letting $\hat \mu_\ep = \curl (\la \nab v_\ep-i A_\ep v_\ep,  iv_\ep\ra + A_\ep)$, we have 
\begin{multline}\label{JJ}
\left|\int_{\Omega} \xi ( \mu_\ep - \hat \mu_\ep )\right|= \left|\int_\Omega \np \xi 
\cdot (\la \nab u_\ep -iu_\ep A_\ep, iu_\ep  \ra- \la \nab v_\ep-i  A_\ep v_\ep,  iv_\ep\ra )\right|\\
\le  3\|\nab \xi\|_{L^\infty(\Omega)}  \|1-|u_\ep|^2\|_{L^2(\Omega)} \|\nab u_\ep -i A_\ep u_\ep \|_{L^2(\Omega)}.\end{multline}
Next, we note that $\hat \mu_\ep$ vanishes wherever $|v_\ep|=1$, and thus as soon as $||u_\ep|-1|\le \hal$. Thus, by property  of the balls, we have $\supp \hat \mu_\ep \cap \Omega^\ep \subset \cup_i B_i$ (recall the definition of $\Omega^\ep$ in the statement of the lemma).  We also have that  whenever $B_i \subset \Omega^\ep$, it holds that  $\int_{B_i} \hat \mu_\ep = 2\pi d_i$ (see \cite[Lemma 6.3]{livre}).  Writing $\xi$ as $\xi(a_i)+ O(r_i) \|\xi\|_{C^{0,\1}}$ in each $B_i$, we conclude, exactly as in the proof of \cite[Theorem 6.1]{livre}. The only point that is a bit different is we need an analogue of  \cite[Lemma 6.4]{livre} to bound $\sum_i r_i \int_{B_i} | \mu_\ep|$  which works on an unbounded domain. For that, we may check by direct computation that  \begin{equation}\label{calmu}
\mu_\ep= 2\la \nab u_\ep -iA_\ep u_\ep,i(\nab u_\ep - i A_\ep u_\ep)\ra+ (1-|u_\ep|^2 ) \curl A_\ep  ,\end{equation} so
$$\int_\Omega |\mu_\ep |\le  2\int_\Omega 2 |\nab u_\ep -i A_\ep u_\ep|^2+ |1-|u_\ep|^2||\curl A_\ep|$$ and the same holds for $\hat \mu_\ep$ with a factor $4$ in front, in view of \eqref{diffmd20}.  We thus obtain that 
\begin{multline*}
\left|  \int_\Omega \xi (\mu_\ep - 2\pi \sum_i d_i \delta_{a_i}  ) \right|
\\ \le  C (\sum_i r_i  +\ep) \|\xi\|_{C^{0,1}} \int_{\cup_i B_i\cup (\Omega\backslash \Omega^\ep) }  2 |\nab u_\ep -i A_\ep u_\ep|^2+|1-|u_\ep|^2| |\curl A_\ep|  
\\+ C \|\xi\|_{C^{0,1}}\|1-|u_\ep|^2\|_{L^2(\R^2)} \|\nab u_\ep -i  A_\ep u_\ep\|_{L^2(\R^2)}, 
\end{multline*} where $C$ is  universal, and we easily deduce the result.
\end{proof}

\subsubsection{Ball construction lower bound, sharp version} In the result below, we use the Lorentz space $L^{2, \infty}$ as in \cite{stice}, which can be defined by 
\begin{equation}\label{l2infini}
\|f\|_{L^{2,\infty}(\R^2)} = \sup_{|E|<\infty} \int_E |f|\end{equation}  where $|E|$ denotes the Lebesgue measure of $E$.
\begin{pro}\label{lem43}
Assume $\E(u_\ep)\le \pi \N \lep+ \N^2$ where  $\N$ satisfies \eqref{condN2}, and $\v\in C^{1,\gamma}(\R^2)$ with $\curl \v\in L^2(\R^2)$. Then there exists $\ep_0$  such that for any $\ep<\ep_0$, the following holds.
There exists a finite collection of disjoint closed balls $\{ B_i= B(a_i, r_i)\}_i$ such that, letting $d_i= \deg(u_\ep, \p B_i)$, the following holds
\begin{enumerate}
\item $\sum_i r_i  \le e^{-\sqrt{\N}}$.
\item $\{ x, ||u_\ep|-1| \ge \hal  \} \subset\cup_{i} B(a_i, r_i).$
\item $$\hal \int_{\cup_i B_i} |\nab u_\ep-iu_\ep \N \v|^2 +\frac{(1-|u_\ep|^2)^2}{2\ep^2} \ge \pi \sum_i |d_i| \lep - o(\N^2).$$   
\item
$$\|\nab u_\ep -iu_\ep\N\v\|_{L^{2,\infty}(\R^2)}^2 \le
 C  \(\E(u_\ep) -    \pi \sum_i |d_i| \lep     + \sum_i |d_i|^2\)+o(\N^2).
$$
\item  Letting $\tilde \mu_\ep$ be as in \eqref{tildemu},  for any $\xi \in C^{0,\gamma}_c(\R^2)$, we have
\begin{equation}
\left|\int \xi (2\pi \sum_i d_i \delta_{a_i}- \tilde \mu_\ep) \right|\le o(1) \|\xi\|_{C^{0,\gamma}} .
\end{equation}
\end{enumerate}
\end{pro}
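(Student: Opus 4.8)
The plan is to read the density $|\nab u_\ep-iu_\ep\N\v|^2+\frac{(1-|u_\ep|^2)^2}{2\ep^2}$ as a \emph{magnetic} Ginzburg--Landau energy with gauge field $A:=\N\v$, and to run the vortex ball construction of \cite{sa,j} in the optimized form of \cite[Theorems 4.1 and 6.1]{livre} together with the refinement \cite{stice1}. This is precisely the framework of Lemma~\ref{lemjac}, whose vorticity $\curl(\la\nab u_\ep-iu_\ep\N\v,iu_\ep\ra+\N\v)$ is the modulated vorticity $\tilde\mu_\ep$ of \eqref{tildemu}. Since $\v\in C^{1,\gamma}(\R^2)$ the field $A$ is smooth, so on each small ball $B(a_i,r_i)$ I may replace $A$ by the constant $\N\v(a_i)$, which is removable by a gauge transformation affecting neither degrees nor the leading energy; the residual non-constant part of $A$ has size $O(\N\,r_i)$ on $B_i$, so that summed over the balls the magnetic corrections to all the lower bounds below are $o(\N^2)$ and may be ignored.

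First I would cover the bad set $\{\,||u_\ep|-1|\ge\hal\,\}$ (on which the potential forces a large energy density, so that $\E(u_\ep)\le\pi\N\lep+\N^2$ bounds its measure) by balls of radius $\sim\ep$, and then grow and merge them \`a la Sandier, stopping once the total radius reaches $e^{-\sqrt\N}$; since $e^{-\sqrt\N}>\ep=e^{-\lep}$ (as $\sqrt\N\ll\lep$) there is room to do so, and this gives~(1) and~(2). For~(3), the sharp lower bound of \cite[Theorem~4.1]{livre} at stopping radius $e^{-\sqrt\N}$, net of the field corrections, reads
\[
\hal\int_{\cup_i B_i}|\nab u_\ep-iu_\ep\N\v|^2+\frac{(1-|u_\ep|^2)^2}{2\ep^2}\ \ge\ \pi\Big(\sum_i|d_i|\Big)\Big(\lep-\sqrt\N-\log\sum_i|d_i|-C\Big)-o(\N^2).
\]
Comparing the left-hand side with $\E(u_\ep)\le\pi\N\lep+\N^2$ (via Lemma~\ref{contrmod} to pass from $\E$ to the ball energy) forces $\sum_i|d_i|\le C\N$; since moreover $\N\le O(\lep)$ by \eqref{condN2}, the correction $\pi\sum_i|d_i|\,(\sqrt\N+\log\sum_i|d_i|+C)\le C\N(\sqrt\N+\log\N)=o(\N^2)$, whence~(3).

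The weak-$L^2$ bound~(4) is the $L^{2,\infty}$ refinement of \cite{stice1}: splitting into the interior and exterior of the balls, one controls $\|\nab u_\ep-iu_\ep\N\v\|_{L^{2,\infty}(\R^2)}^2$ by the excess energy $\E(u_\ep)-\pi\sum_i|d_i|\lep$ plus the self-interaction term $\sum_i|d_i|^2$, up to $o(\N^2)$; the Lorentz norm is the natural scale since $\nab u\sim d/|x|$ near a degree-$d$ vortex. Finally~(5), the Jacobian estimate relating $2\pi\sum_i d_i\delta_{a_i}$ to $\tilde\mu_\ep$, follows from the Jacobian estimate in the sharp form of \cite[Theorem~6.1]{livre} (Lemma~\ref{lemjac} being its infinite-domain variant), whose H\"older version I obtain by writing $\xi=\xi(a_i)+O(r_i^\gamma)\|\xi\|_{C^{0,\gamma}}$ on each $B_i$: the masses match, $\int_{B_i}\tilde\mu_\ep=2\pi d_i$, and the exponentially small total radius $\sum_i r_i\le e^{-\sqrt\N}$ dominates the bounded vortex mass $\sum_i|d_i|\le C\N$, giving an error $O(e^{-\gamma\sqrt\N}\N)=o(1)$.

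The hard part will be the sharpness demanded by~(3)--(5) \emph{uniformly across the whole regime} \eqref{condN2}, which permits $\N$ to grow arbitrarily slowly. One must simultaneously keep the large gauge field $\N\v$ (whose curl is of size $\N$) from spoiling the scale-by-scale localization of the construction, pin down $\sum_i|d_i|$ and the excess energy to precision $o(\N^2)$, and---crucially in~(5)---arrange that it is the vortex \emph{mass} $\sum_i|d_i|\lesssim\N$, rather than the much larger vortex \emph{energy} $\sim\N\lep$, that governs the Jacobian error. This is exactly why the optimized lower bounds of \cite{livre} and the improvement of \cite{stice1}, rather than the classical $\pi|d|\lep$ bound with cruder error terms, are indispensable.
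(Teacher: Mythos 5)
Your treatment of items (1)--(4) follows essentially the same route as the paper (the magnetic ball construction of \cite{sa,j} in the optimized form of \cite[Theorem 4.1]{livre}, with the Lorentz-space refinement of \cite{stice1} for item (4)) and is essentially correct. The genuine gap is in item (5). You correctly identify that the Jacobian error must be governed by the vortex \emph{mass} $\sum_i|d_i|\le C\N$ rather than by the energy $\sim \N\lep$, but your single-scale argument does not achieve this. Applying the Jacobian estimate (Lemma \ref{lemjac}) directly on the final balls, the error is $C\big(\sum_i r_i\big)\|\xi\|_{C^{0,1}}$ times the \emph{total energy} carried by the balls, not times $\sum_i|d_i|$: the H\"older approximation $\xi=\xi(a_i)+O(r_i^{\gamma})$ is paid against the total variation $\int_{B_i}|\tilde\mu_\ep|$, and the pointwise bound $|\tilde\mu_\ep|\le 2|\nab u_\ep-iu_\ep\N\v|^2+\cdots$ only controls this by the energy. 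The identity $\int_{B_i}\tilde\mu_\ep=2\pi d_i$ (which in any case holds for the modified map with modulus one outside the balls, not for $u_\ep$ itself) gives no control on $\int_{B_i}|\tilde\mu_\ep|$, since the Jacobian need not have a sign. With $\sum_i r_i=e^{-\sqrt{\N}}$ the resulting error is of order $e^{-\sqrt{\N}}\,\N\lep$, which is \emph{not} $o(1)$ throughout the regime \eqref{condN2}: whenever $\N$ grows more slowly than $(\llep)^2$ --- allowed, since only $\N\gg1$ is assumed --- one has $e^{-\sqrt{\N}}\N\lep\to\infty$.

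The paper's fix is a two-scale construction, which is the missing ingredient. One first runs the ball construction to the tiny total radius $\ep^{1/4}$, producing balls $B_j'$ on which the Jacobian estimate has error $C\ep^{1/4}\N\lep=o(1)$ and from which one extracts $\sum_j|d_j'|\le C\N$; one then grows to the final total radius $e^{-\sqrt{\N}}$ (needed so that the balls capture the energy $\pi\sum_i|d_i|\lep$ for items (3)--(4)) and compares the two discrete measures $\sum_i d_i\delta_{a_i}$ and $\sum_j d_j'\delta_{a_j'}$ directly. Since $\sum_{j:\,B_j'\subset B_i}d_j'=d_i$ by additivity of the degree, this comparison costs only $\|\xi\|_{C^{0,1}}\big(\sum_i r_i\big)\sum_j|d_j'|\le Ce^{-\sqrt{\N}}\N=o(1)$: here the mass really does govern the error, because one displaces point masses rather than integrating a signed density. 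A secondary, more minor point: your derivation of $\sum_i|d_i|\le C\N$ by comparing the lower bound at radius $e^{-\sqrt{\N}}$ with the energy upper bound needs an a priori control on $D=\sum_i|d_i|$ for the factor $\log\frac{r}{D\ep}$ to be coercive; the paper obtains this cleanly at the small scale from $\sum_j|d_j'|\le\frac{C}{\lep}\int_{\cup_j B_j'}(\cdots)$ together with the monotonicity in $r$ of the families $\mathcal B(r)$.
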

\begin{proof} The result of  \cite[Theorem 4.1]{livre} applied to $u_\ep$, $A_\ep= \N \v$ and $\alpha=3/4$, provides for $\ep $ small enough, for  any $ \ep^{1/4} \le r <1$     a collection of disjoint closed balls $\mathcal B(r)
$   covering $\{ x, ||u_\ep|-1| \ge \ep^{3/16}  \}$, such that the sum of the radii of the balls in the collection is $r$, and such that  denoting $D:=\sum_{B\in \mathcal B(r)} |d_B|$, we have
$$\hal \int_{\cup_{B\in \mathcal B(r)} }  |\nab u_\ep-iu_\ep \N \v|^2 +r^2 |\N\curl\v|^2+ \frac{(1-|u_\ep|^2)^2}{2\ep^2} \ge \pi D \( \log \frac{  r }{D\ep}-C\).$$
 One notes that the fact that we are in an unbounded domain does not create any problem. Indeed, since $\E(u_\ep)<\infty$, this implies (cf. e.g. \cite[Lemma 2.3]{j} or \cite[Lemma 3.5]{hanli})that there exists a radius $R_\ep$ such that $|u_\ep|\ge \hal$ outside of $B(0,R_\ep)$ and $$\int_{B(0,R_\ep)^c} |\nab u_\ep -iu_\ep \N \v|^2 + \frac{(1-|u_\ep|^2)^2}{2\ep^2} +\N^2 (1-|u_\ep|^2) \psi <1$$
 i.e. the remaining energy is smaller than the desired error. This way the construction can be applied in $B(0,R_\ep+1)$ only, yielding always a finite collection of balls covering $\{|u_\ep|\le \hal\}$.

We first apply this result with the choice  $r=r'= \ep^{1/4}$ to obtain a collection  of balls $\{B_j'\}$ 
with centers $a_j'$, radii $r_j'$ and degrees $d_j'$, covering $\{ x, ||u_\ep|-1| \ge \hal  \}$ and satisfying $\sum_j r_j'\le \ep^{1/4}$. The  estimates of \cite{livre} also yield that 
$$\sum_j |d_j'| \le \frac{C}{\lep}\int_{\cup_j B_j'} |\nab u_\ep-iu_\ep \N \v|^2 +\ep^{1/2}\N^2 |\curl \v|^2+\frac{(1-|u_\ep|^2)^2}{2\ep^2}.$$
Using that $\curl \v$ is bounded, the fact that $\sum r_i'\le \ep^{1/4}$, the upper bound on $\E(u_\ep)$ and \eqref{condN2}, we deduce from this relation that $\sum_j|d_j'| \le C\N$. 

 We next apply the above result with the choice  $r= e^{-\sqrt{\N}}$.
This gives a  collection of balls $\{B_i\}= \{B(a_i,r_i)\}$ of degrees $d_i$,  satisfying  items 1 and 2 of the proposition and 
\begin{multline*}
\hal \int_{\cup_i B_i} |\nab u_\ep-iu_\ep \N \v|^2 +e^{-2\sqrt{\N}} |\N \curl \v|^2+\frac{(1-|u_\ep|^2)^2}{2\ep^2} \\ \ge \pi \sum_i|d_i|\( \log \frac{   e^{-\sqrt{\N}}   }{\sum_i |d_i|\ep}-C\).\end{multline*}
It is part of the statements of \cite[Theorem 4.1]{ss} that the family $\mathcal{B}(r)$ is increasing in $r$, i.e. here that  the $B_i$'s cover the $B_j' $'s. By additivity of the  degree, we  thus have  $\sum_i |d_i|\le \sum_j |d_j'|\le C \N$, and using  $\sum_i r_i\le e^{-\sqrt{\N}}$ and the boundedness of $\curl \v$,  the 
desired estimate of item 3 follows.

For item 4,  we use \cite[Corollary 1.2]{stice1}  which yields that  \begin{multline}
 \|\nab u_\ep -iu_\ep \N \v\|_{L^{2,\infty} (\R^2) }^2 \\ \le  C \(\E(u_\ep)- \pi \sum_i| d_i|\(\log \frac{r}{\ep \sum_i|d_i|}-C\)    +\pi \sum_i |d_i|^2\)+o(\N^2).
 \end{multline}
This is  essentially a strengthened version of the result of item 3, in which the difference between the two sides of the inequality is shown to bound from above $\|\nab u_\ep -iu_\ep \N \v\|_{L^{2,\infty} (\R^2) }^2$.

Let us now turn to item 5, which is an adaptation to an infinite setting of the Jacobian estimates, for instance as in \cite[Theorem 6.1]{livre}. The reason we needed a two-step construction above is that the total  radius of the second set of  balls, $e^{-\sqrt{\N}}$,  (which have to be chosen this large so that they contain enough energy)  is not very small compared to  $\lep$ when $\N$ is not very large, and thus the Jacobian estimate applied directly on these large balls would give too large of an error.
 
Letting $\xi $ be a  smooth test-function, we may write
$$\i\Big( \sum_i  d_i \delta_{a_i}- \sum_j d_j' \delta_{a_j'}\Big) \xi= \sum_i \Big( d_i \xi(a_i)- \sum_{j, B_j' \subset B_i}  d_j' \xi(a_j')\Big).$$Since $\sum_{j, B_j' \subset B_i } d_j'=d_i$ and $\sum_j |d_j'|\le C \N$,  we  may write 
 \begin{multline}\label{m1}\left|\i\Big( \sum_i  d_i \delta_{a_i}- \sum_j d_j' \delta_{a_j'}\Big) \xi\right|
 \le \|\xi\|_{C^{0,1}}\sum_i  r_i^\gamma(\sum_{j, B_j' \subset B_i}  |d_j'|)\\ \le\|\xi\|_{C^{0,1}} (\sum_i r_i)\sum_i \sum_{j, B_j'\subset B_i}  |d_j'| \le C  \|\xi\|_{C^{0,1}} e^{-\gamma\sqrt{\N}} \N =o(1) \|\xi\|_{C^{0,1}}.\end{multline} 
Applying Lemma \ref{lemjac}  with $A_\ep = \N \v$ on $\R^2$, and it yields that 
 \begin{multline*}
\left| \i \xi (2\pi\sum_j d_j' \delta_{a_j'}-\tilde \mu_\ep) \right| \\ \le  C \ep^{1/4} \|\xi\|_{C^{0,1}} \i |\nab u_\ep -iu_\ep \N\v|^2+\N |1-|u_\ep|^2| |\curl \v|  + \frac{(1-|u_\ep|^2)^2}{2\ep^2}. \end{multline*} Using that $\curl \v\in L^1(\R^2)$, the upper bound on  $\E(u_\ep)$, and  combining with \eqref{m1}, we obtain the desired result for $\gamma=1$. The result for $\gamma<1$ follows by interpolation as in \cite{js}, using that $\sum_j |d_j'|\le C \N$   and $\i |\tilde \mu_\ep| \le C \N \lep $ hence  $\|2\pi \sum_j d_j' \delta_{a_j'}-\tilde \mu_\ep\|_{(C^0)^*} \le C \N$.
 \end{proof}
 \subsubsection{Ball construction lower bound, localized version}
We will also  need a less precise but localizable version of the ball construction. This can be borrowed directly from \cite{j,sa,ss1}, and combined with the Jacobian estimate of Lemma \ref{lemjac}, so we omit the proof. 
\begin{lem}\label{blocal}Under the assumptions of Proposition \ref{lem43},  there exists $\ep_0$ such that for all $\ep<\ep_0$, there exists a finite collection of disjoint closed balls $\{B_i\}_i=\{B(a_i,r_i)\}_i$ such that, letting $d_i= \deg(u_\ep, \p B_i)$, the following holds
\begin{enumerate}
\item $\sum_i r_i \le e^{-\sqrt{\lep}}$.
\item 
$$\forall i, \quad \hal \int_{B_i} |\nab u_\ep-iu_\ep \N \v|^2+ \frac{(1-|u_\ep|^2)^2}{2\ep^2} \ge \pi |d_i| \lep(1-o(1)).$$
\item For any $0<\gamma \le 1$ and any $\xi \in C^{0,\gamma}_c(\R^2)$, we have
$$\left|\i \xi \( 2\pi \sum_i d_i \delta_{a_i}- \tilde \mu_\ep\) \right|\le o(1)\|\xi\|_{C^{0,\gamma}} . $$ 
\end{enumerate}
\end{lem}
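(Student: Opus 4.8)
The plan is to run the by-now-standard gauged ball construction of Jerrard and Sandier \cite{j,sa,ss1}, applied to $u_\ep$ with the space gauge field $A_\ep=\N\v$, so that the energy density that gets lower-bounded is precisely the modulated one appearing in item (2). Exactly as in the proof of Proposition \ref{lem43}, I would first use the finiteness of $\E(u_\ep)$ to reduce to a bounded domain: there is a radius $R_\ep$ such that $|u_\ep|\ge\hal$ outside $B(0,R_\ep)$ and the energy left outside is below the target error (cf. \cite[Lemma 2.3]{j}, \cite[Lemma 3.5]{hanli}), so the construction may be carried out in $B(0,R_\ep+1)$ and produces a finite collection of balls covering $\{||u_\ep|-1|\ge\hal\}$.

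Starting from initial balls of radius of order $\ep$, whose number $M$ is bounded by the number of vortices and hence by $C\N$ through the energy bound, I would grow and merge them monotonically until the total radius reaches $e^{-\sqrt{\lep}}$, which gives item (1). Because the construction increases $\log(\text{radius})$ at a uniform rate, every final ball $B_i=B(a_i,r_i)$ is dilated from the $\ep$ scale by a common factor $e^{T}$ with
\[
T=\log\frac{e^{-\sqrt{\lep}}}{M\ep}=\lep-\sqrt{\lep}-\log M .
\]
The degree estimate established inside the proof of Proposition \ref{lem43} gives $\sum_i|d_i|\le C\N\le C\lep$, hence $M\le C\lep$ and $\log M=o(\lep)$, so that $T=\lep(1-o(1))$ uniformly in $i$; since merging only increases radii, $\log(r_i/\ep)\ge T$ for every $i$. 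The fundamental per-ball lower bound of the construction then reads
\[
\hal\int_{B_i}|\nab u_\ep-iu_\ep\N\v|^2+\frac{(1-|u_\ep|^2)^2}{2\ep^2}\ge \pi|d_i|\Big(\log\tfrac{r_i}{\ep}-\log|d_i|-C\Big)\ge \pi|d_i|\,\lep(1-o(1)),
\]
which is item (2); here the covariant correction $r_i^2\N^2|\curl\v|^2$ produced by the gauged construction is harmless since $r_i\le e^{-\sqrt{\lep}}$ and $\curl\v$ is bounded, using \eqref{condN2} and the bound $\E(u_\ep)\le\pi\N\lep+\N^2$.

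For item (3), I would feed the final collection into Lemma \ref{lemjac} with $A_\ep=\N\v$ on all of $\R^2$. Since $\sum_i r_i+\ep\le e^{-\sqrt{\lep}}+\ep$ while the energy-type integral on the right of \eqref{bornJ} is at most $C\N\lep$ (using $\curl\v\in L^1$ and the energy bound), the error is bounded by $Ce^{-\sqrt{\lep}}\N\lep\,\|\xi\|_{C^{0,1}}=o(1)\|\xi\|_{C^{0,1}}$, which proves item (3) for $\gamma=1$. The case $0<\gamma<1$ would then follow by interpolation exactly as at the end of the proof of Proposition \ref{lem43} (following \cite{js}), using $\sum_i|d_i|\le C\N$ together with $\i|\tilde\mu_\ep|\le C\N\lep$.

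The step I expect to be the main obstacle is item (2): upgrading the construction's natural \emph{global} lower bound into a \emph{per-ball} one carrying a uniform factor $\lep(1-o(1))$. This hinges on two quantitative points, namely that the logarithmic growth $T$ is shared by all balls, so that no ball is left with a smaller gain, and that the total number of vortices and all degrees are $O(\lep)$, so that the corrections $\log M$ and $\log|d_i|$ stay $o(\lep)$. One must also keep the covariant gauge corrections and the leftover exterior energy strictly below the per-ball target $\pi|d_i|\lep$, which is exactly where the smallness of the total radius $e^{-\sqrt{\lep}}$ and the hypothesis \eqref{condN2} enter.
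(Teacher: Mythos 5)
Your proposal follows exactly the route the paper intends: the paper omits the proof of this lemma, stating only that it is obtained by borrowing the localizable ball constructions of \cite{j,sa,ss1} (which yield per-ball lower bounds in which each ball carries the full logarithmic factor) and combining them with the Jacobian estimate of Lemma \ref{lemjac}, which is precisely your argument. Your quantitative bookkeeping is correct --- the corrections $\log M$ and $\log|d_i|$ are $o(\lep)$ thanks to $\N\le O(\lep)$ and the energy bound, and the error in item (3) is $(\sum_i r_i+\ep)\cdot C\N\lep=o(1)$, with the case $\gamma<1$ by interpolation as in Proposition \ref{lem43}.
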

We emphasize that these balls are not necessarily the same as those obtained by Proposition \ref{lem43}.

\subsubsection{Consequences on  the energy excess}
We next show how the energy excess $\E- \pi \N \lep$ controls various quantities, including the energy outside of the balls. 
\begin{coro}
 For any $t \le \te$ (where $\te$ is as in \eqref{bap}), letting $\{B_i\}_{i}$ (depending on $\ep, t$) be the collection of  balls given by Proposition \ref{lem43},  we have
 \begin{multline}\label{exces}
\hal \int_{\R^2 \backslash \cup_{i} B_i} |  \nab u_\ep -iu_\ep \N \v|^2 + \frac{(1-|u_\ep|^2)^2}{2\ep^2} \\
  \le \E(u_\ep(t),t) -\pi \N \lep +o(\N^2),
 \end{multline}
   \begin{equation}\label{contsumdi} \text{for $\ep$ small enough }\qquad 
 \N\le\sum_i |d_i|\le C \N,\end{equation}
 \begin{equation}\label{contlor} \|\nab u_\ep -iu_\ep \N \v\|_{L^{2,\infty}(\R^2)}\le C \N,\end{equation}
and for any nonnegative $\xi \in C^{0, \gamma}(\R^2)$,
\begin{multline}\label{exces2}
\hal \int_{\R^2}\xi \(\frac{1}{\lep}  |  \nab u_\ep -iu_\ep \N \v|^2 - \tilde \mu_\ep\) \\ \le  \|\xi\|_{L^\infty}\frac{1}{\lep} ( \E(u_\ep(t),t)-\pi \N \lep ) +o(\N)\|\xi\|_{C^{0,\gamma}} .
 \end{multline} \end{coro}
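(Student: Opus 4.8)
The plan is to establish the four estimates together, proving \eqref{contsumdi} first since the bounds on $\sum_i|d_i|$ feed into the others, and leaving \eqref{exces2} for last as it is by far the most delicate.

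As a preliminary reduction I would record that on $[0,\te]$ the modulated energy agrees with a genuine energy density up to negligible terms. Writing $\mathcal{F}_\ep := |\nab u_\ep - iu_\ep\N\v|^2 + \frac{1}{2\ep^2}(1-|u_\ep|^2)^2$ for the density without the $\psi$-term, one has $\E(u_\ep) - \hal\i\mathcal{F}_\ep = \hal\i\N^2(1-|u_\ep|^2)\psi$, and by Cauchy--Schwarz, the coercivity estimate \eqref{contmod}, the $L^2$ bound on $\psi=-|\v|^2$ from Lemma~\ref{lemv}, the a priori bound \eqref{bap} and \eqref{condN2}, this term is $o(\N^2)$ (in fact $o(1)$). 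Hence $\hal\i\mathcal{F}_\ep = \E(u_\ep,t) + o(\N^2)$ throughout. For \eqref{contsumdi}, the lower bound is topological: since $u_\ep\in U_\N + H^1(\R^2)$ has winding number $\N$ at infinity and $|u_\ep|\ge\hal$ outside the bounded collection of balls by item (2), additivity of degree gives $\sum_i d_i = \N$, whence $\sum_i|d_i|\ge\big|\sum_i d_i\big| = \N$. The upper bound follows by feeding item (3) into $\hal\i\mathcal{F}_\ep = \E + o(\N^2)\le \pi\N\lep + \N^2 + o(\N^2)$, which yields $\pi\sum_i|d_i|\lep \le \pi\N\lep + \N^2 + o(\N^2)$; dividing by $\pi\lep$ and using $\N\le O(\lep)$ gives $\sum_i|d_i|\le C\N$.

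Estimate \eqref{exces} is then immediate: splitting $\hal\i\mathcal{F}_\ep = \hal\int_{\cup_iB_i}\mathcal{F}_\ep + \hal\int_{\R^2\setminus\cup_iB_i}\mathcal{F}_\ep$ and inserting the concentration lower bound of item (3) together with $\sum_i|d_i|\ge\N$, I obtain $\hal\int_{\R^2\setminus\cup_iB_i}\mathcal{F}_\ep \le \E(u_\ep,t) + o(\N^2) - \pi\sum_i|d_i|\lep \le \E(u_\ep,t) - \pi\N\lep + o(\N^2)$. For \eqref{contlor} I simply insert into item (4) the bounds just obtained: $\E - \pi\sum_i|d_i|\lep\le\N^2$ (from \eqref{bap} and $\sum_i|d_i|\ge\N$) and $\sum_i|d_i|^2\le(\sum_i|d_i|)^2\le C\N^2$, giving $\|\nab u_\ep - iu_\ep\N\v\|_{L^{2,\infty}}^2\le C\N^2 + o(\N^2)$, i.e. \eqref{contlor}.

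For the crucial bound \eqref{exces2} I would instead use the localized balls $\{B_i=B(a_i,r_i)\}$ of Lemma~\ref{blocal} (for which the total radius $\sum_ir_i\le e^{-\sqrt\lep}$ makes the Jacobian error acceptable) and test against a nonnegative $\xi\in C^{0,\gamma}$. Since $\xi\ge0$ and $(1-|u_\ep|^2)^2\ge0$, I may replace $\frac1\lep|\nab u_\ep - iu_\ep\N\v|^2$ by $\frac1\lep\mathcal{F}_\ep$ at the cost of a nonpositive term, so it suffices to bound $\hal\i\xi\frac1\lep\mathcal{F}_\ep - \hal\i\xi\tilde\mu_\ep$ from above. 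In each ball I use $\xi\le\xi(a_i) + Cr_i^\gamma\|\xi\|_{C^{0,\gamma}}$ and outside I use $\xi\le\|\xi\|_{L^\infty}$; because $\sum_ir_i\le e^{-\sqrt\lep}$, the Hölder remainder sums to $o(\N)\|\xi\|_{C^{0,\gamma}}$. Using Lemma~\ref{blocal}(3) to replace $\tilde\mu_\ep$ by $2\pi\sum_i d_i\delta_{a_i}$ (error $o(1)\|\xi\|_{C^{0,\gamma}}$), and setting $f_i := \frac1\lep\hal\int_{B_i}\mathcal{F}_\ep$ and $g:=\frac1\lep\hal\int_{\R^2\setminus\cup_iB_i}\mathcal{F}_\ep$, I reduce matters to bounding $\sum_i\xi(a_i)(f_i - \pi d_i) + \|\xi\|_{L^\infty}g$ from above.

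The main obstacle -- and the single place where the sign structure is used -- is that the ball construction only furnishes a \emph{lower} bound $f_i \ge \pi|d_i|(1-o(1))$ per ball, whereas the term $+\sum_i\xi(a_i)f_i$ needs \emph{upper} control of the concentrated energy. The resolution is that this lower bound forces $f_i - \pi d_i \ge \pi(|d_i|-d_i) - o(1)\pi|d_i| \ge -o(1)\pi|d_i|$ to be almost nonnegative; since $\xi(a_i)\le\|\xi\|_{L^\infty}$, I may then replace $\xi(a_i)$ by $\|\xi\|_{L^\infty}$ up to a summed error $o(1)\pi\|\xi\|_{L^\infty}\sum_i|d_i| = o(\N)\|\xi\|_{L^\infty}$, obtaining $\sum_i\xi(a_i)(f_i - \pi d_i) + \|\xi\|_{L^\infty}g \le \|\xi\|_{L^\infty}\big(\sum_if_i + g - \pi\sum_id_i\big) + o(\N)\|\xi\|_{L^\infty}$. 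Now the totals telescope: $\sum_if_i + g = \frac1\lep\hal\i\mathcal{F}_\ep = \pi\N + \frac1\lep(\E - \pi\N\lep) + o(\N)$ while $\sum_id_i = \N$, so the bracket equals $\frac1\lep(\E-\pi\N\lep) + o(\N)$, and collecting all error terms gives exactly \eqref{exces2}. The delicate point throughout is that the excess $\E - \pi\N\lep$ may be spent only once; the telescoping above is precisely what guarantees this, and it is why one localizes $\xi$ to $\xi(a_i)$ inside the balls rather than bounding by $\|\xi\|_{L^\infty}$ everywhere, which would lose a full factor of $\N$ through the signed sum $\pi\sum_i d_i(\|\xi\|_{L^\infty}-\xi(a_i))$.
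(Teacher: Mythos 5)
Your proof is correct and follows essentially the same route as the paper's: the same bookkeeping yields \eqref{exces}, \eqref{contsumdi} and \eqref{contlor}, and for \eqref{exces2} you use the same mechanism the paper does, namely the localized balls of Lemma \ref{blocal}, the near-nonnegativity of the per-ball excess $f_i-\pi d_i$ (which is what licenses replacing $\xi(a_i)$ by $\|\xi\|_{L^\infty}$ at a cost of only $o(1)\sum_i|d_i|=o(\N)$), and item 3 of Lemma \ref{blocal} to pass from $2\pi\sum_i d_i\delta_{a_i}$ back to $\tilde\mu_\ep$. The only cosmetic differences are that the paper derives $\sum_i d_i=\N$ by testing item 5 of Proposition \ref{lem43} against $\chi_R$ and comparing with $\int\tilde\mu_\ep=2\pi\N$ from \eqref{intmun} (a cleaner substitute for your degree-at-infinity argument, which would need a little care since $u_\ep-U_{\N}$ is only $H^1$), and reads off $\sum_i|d_i|\le C\N$ from the proof of Proposition \ref{lem43} rather than re-deriving it from item 3 and the energy bound as you do.
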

\begin{proof}
First, applying item 5 of Proposition \ref{lem43} with $\xi= \chi_R$ as in \eqref{defchir} and letting $R \to \infty$,  we must have $\i (2\pi \sum_i d_i \delta_{a_i} -\tilde \mu_\ep)= o_\ep(1)$. Comparing with  \eqref{intmun}, we deduce that $\sum_i d_i=   \N$ for $\ep $ small enough. Subtracting the result of item 3 of Proposition \ref{lem43} from $\E(u_\ep(t),t)$ and using Lemma \ref{contrmod}, we then obtain \eqref{exces}. The upper bound in \eqref{contsumdi} was proved in the course of the proof of Proposition \ref{lem43}, the lower bound is an obvious consequence of $\sum_i d_i=\N$.

The relation \eqref{contlor} is a direct consequence of item 4 of Proposition \ref{lem43}, \eqref{bap} and \eqref{contsumdi}, writing $\sum_i |d_i|^2 \le (\sum_i |d_i|)^2$.

Finally, for \eqref{exces2} we use instead the balls given by Lemma \ref{blocal}.    From item~2 of Lemma \ref{blocal} we may write  
\begin{align*}&\hal \int_{\R^2 \backslash \cup_{i} B_i} |\nab u_\ep -iu_\ep \N \v|^2 + \frac{(1-|u_\ep|^2)^2}{2\ep^2}   \\
&+\sum_{i}   \hal \int_{B_i}|\nab u_\ep -iu_\ep \N \v|^2 + \frac{(1-|u_\ep|^2)^2}{2\ep^2}  - \pi |d_i| \lep (1-o(1))
 \\ &\le \E(u_\ep(t),t) - \pi \sum_i |d_i|\lep (1+o(1)).\end{align*}Moreover,  from the same argument as before we have $\sum_i d_i=\N$  and $\sum_i |d_i|\le C\N$ for these balls, hence since the terms of the  above sum are all nonnegative, adding to both sides $ \pi \sum_i (|d_i|-d_i)\lep$, we obtain
 \begin{align*}&\hal \int_{\R^2 \backslash \cup_{i} B_i} |\nab u_\ep -iu_\ep \N \v|^2 + \frac{(1-|u_\ep|^2)^2}{2\ep^2}   \\
&+\sum_{i}  \left| \hal \int_{B_i}|\nab u_\ep -iu_\ep \N \v|^2 + \frac{(1-|u_\ep|^2)^2}{2\ep^2}  - \pi d_i \lep (1-o(1))\right|
 \\ &\le \E(u_\ep(t),t) - \pi \N \lep (1+o(1)).\end{align*}
Next,  separating the integral  between $\cup_{i} B_i$ and the complement,  and using item~1 of Lemma \ref{blocal}, we deduce that 
\begin{multline}\label{exces20}
\hal \int_{\R^2}\xi |  \nab u_\ep -iu_\ep \N \v|^2 + \xi \frac{(1-|u_\ep|^2)^2}{2\ep^2} 
\le \pi  \sum_i d_i \xi(a_i)  \lep(1+o(1))
\\+o(1)\|\xi\|_{C^{0,\gamma}} + \|\xi\|_{L^\infty} \( \E(u_\ep(t),t)-\pi \N \lep(1+o(1))\) .
 \end{multline}
Combining this with item 3 of Lemma \ref{blocal}, we deduce that  \eqref{exces2} holds.
\end{proof}

\subsubsection{Approximation of $\v$} 
We will need  an approximation of $\v(t)$ based on the balls constructed via Proposition \ref{lem43}.
\begin{lem} \label{lembl} Let $\v$ satisfy the assumptions of Theorem \ref{th2} and $u_\ep$ satisfy \eqref{bap}.  For each $t\le \te$, letting $\{B_i\}_{i}$ be the collection of   balls constructed in Proposition~\ref{lem43}, there exists a vector field $\bar \v: \R^2 \to \R^2$ (depending on $\ep$ and $t$)  such that 
\begin{enumerate}
\item $\bar\v$ is constant in each $B_i$,
\item for every $\gamma\in [0,1]$, $\| \bar\v-\v\|_{C^{0,\gamma}} \le C (\sum_i r_i)^{1-\gamma}\le C e^{-(1-\gamma )\sqrt{\N}}$ where $C$ depends only on $\v$ and $\gamma$,
\item $\bar \v-\v$ has compact support.
\end{enumerate}
\end{lem}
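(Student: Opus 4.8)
The plan is to build $\bar\v$ by freezing $\v$ to a constant on (a slightly enlarged version of) each ball and letting it relax back to $\v$ in a thin collar, then to read off the three claimed properties. Since $\v\in C^{1,\gamma}(\R^2)$ by the hypotheses of Theorem \ref{th2} and Lemma \ref{lemv}, it is in particular Lipschitz, so on a ball of radius $r$ the oscillation of $\v$ is $O(r)$; replacing $\v$ by its value at the center therefore costs $O(r)$ in the sup norm, while letting it transition back to $\v$ across a collar whose width is comparable to $r$ costs only $O(1)$ in the Lipschitz seminorm. This is the mechanism that will produce a perturbation $\bar\v-\v$ that is small in $L^\infty$ but merely bounded in $C^{0,1}$, which after interpolation yields item (2).

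Concretely, in the model situation where the balls are pairwise separated relative to their radii, I would fix for each $i$ a cutoff $\chi_i$ with $\chi_i\equiv 1$ on $B_i$, with $\supp \chi_i$ contained in a collar of width $\sim r_i$ around $B_i$, and $\|\nab\chi_i\|_{L^\infty}\le C/r_i$, and set $\bar\v=\v+\sum_i\chi_i(\v(a_i)-\v)$. Then $\bar\v\equiv\v(a_i)$ on $B_i$ (item (1)), provided no other collar meets $B_i$, and $\bar\v-\v$ is supported in the bounded union of the collars (item (3)). Using $|\v(a_i)-\v(x)|\le C r_i$ on $\supp\chi_i$ together with $\|\nab\chi_i\|_{L^\infty}\le C/r_i$, one checks $\|\bar\v-\v\|_{L^\infty}\le C\max_i r_i\le C\sum_i r_i$ and $\|\nab(\bar\v-\v)\|_{L^\infty}\le C$, the latter because the two terms $\nab\chi_i\,(\v(a_i)-\v)$ and $\chi_i\nab\v$ are each $O(1)$.

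The one genuine difficulty is that the balls furnished by Proposition \ref{lem43} are only disjoint, not separated relative to their radii: two balls may be much closer than $r_i$, in which case no collar of width $\sim r_i$ fits and the naive construction produces a gradient that blows up. To remedy this I would first replace $\{B_i\}$ by a comparable but well-separated collection, grouping into a single enclosing ball any cluster of balls whose mutual distances are small compared with their radii, and choosing the clustering scale so that (i) the enclosing balls are pairwise separated by at least their radii, (ii) $\cup_i B_i$ is still covered, so that $\bar\v$ built on the enclosing balls is automatically constant on each original $B_i$, and (iii) the total radius of the enclosing balls is still $\le C\sum_i r_i$. Controlling the total radius under this merging while simultaneously achieving the separation is the heart of the matter; it is a purely geometric packing statement, and it is what lets me run the previous construction on the enclosing balls, using their centers and radii, with no change to the estimates.

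Finally I would conclude item (2) from the standard interpolation inequality $\|f\|_{C^{0,\gamma}}\le C\|f\|_{L^\infty}^{1-\gamma}\,\|f\|_{C^{0,1}}^{\gamma}$ applied to $f=\bar\v-\v$: with $\|f\|_{L^\infty}\le C\sum_i r_i$ and $\|f\|_{C^{0,1}}\le C$, this gives $\|\bar\v-\v\|_{C^{0,\gamma}}\le C(\sum_i r_i)^{1-\gamma}$, and item (1) of Proposition \ref{lem43}, namely $\sum_i r_i\le e^{-\sqrt{\N}}$, then yields the stated bound $C e^{-(1-\gamma)\sqrt{\N}}$. Item (3) is immediate since $\bar\v-\v$ vanishes outside the bounded union of enclosing balls.
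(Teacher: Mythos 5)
Your overall mechanism is the right one, and it is essentially what the paper relies on: make $\bar\v-\v$ of size $O(\sum_i r_i)$ in $L^\infty$ while keeping it bounded in the Lipschitz seminorm, interpolate to get the $C^{0,\gamma}$ bound, and multiply by a cutoff to obtain compact support. The paper does not carry out the construction itself; it invokes Proposition 9.6 of \cite{livre} componentwise and then adds exactly the cutoff argument you describe for item (3). So your interpolation step and your compact-support step are fine and coincide with the paper's.

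The gap is the step you yourself call ``the heart of the matter'' and then leave unproved: the replacement of the $B_i$ by a well-separated family of enclosing balls of comparable (maximal, hence total) radius. That packing statement is false for general disjoint configurations, and the balls of Proposition \ref{lem43} come with no separation guarantee beyond disjointness (in the ball-construction, balls merge precisely when they touch, so at the final radius two output balls can be arbitrarily close). Concretely, consider a dyadic cascade of $2^k$ disjoint unit balls with centers on a line: pair them with gap $\ep<1$ inside each pair, pair the pairs with gap $\ep R_1$ where $R_1$ is the radius of a pair, and so on, so that at every level $m$ the two sub-clusters forming a cluster have radius $R_{m-1}=(2+\ep/2)^{m-1}$ and are at mutual distance $\ep R_{m-1}$. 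If the enclosing balls are pairwise separated by at least their radii and each original ball lies inside a single enclosing ball, an induction on $m$ forces each cluster, hence eventually the whole configuration, into one enclosing ball of radius at least $R_k=2^k(1+\ep/4)^k$, which exceeds $\sum_i r_i=2^k$ by the unbounded factor $(1+\ep/4)^k$. Freezing $\v$ to a single value on that ball then produces an $L^\infty$ error of order $R_k\gg\sum_i r_i$, and item (2) is lost. The lemma itself survives because the constants on distinct $B_i$ are allowed to differ and to track $\v$ across a cluster; a construction achieving this for arbitrary disjoint balls (the one behind \cite{livre}, Proposition 9.6, or for instance an infimal convolution with respect to the metric obtained by collapsing each $B_i$ to a point) is genuinely needed, and ``one constant per separated cluster'' cannot be repaired into one.
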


\begin{proof} It is an adaptation of Proposition 9.6 of \cite{livre}, which we can apply to each component of $\v$.  We note that since the collection of balls is finite,  we may replace $\bar \v$ by $\v+ \chi(\bar \v-\v)$ where $\chi$  is a smooth positive cut-off function which is equal to $1$ on a large enough ball containing $\cup_i B_i$ and vanishes outside of a large enough ball. This makes $\bar \v- \v$ compactly supported without affecting the other properties.  \end{proof}

By continuity of $u_\ep$ (for fixed $\ep$) and of $\v$, one may check that we may make $\bar \v(t)$ measurable in $t$. While we have a good control on $\nab \bar \v$, we have no  control on $\p_t \bar \v$, and this is what prevents us from applying this method in the regime $\N \le O(\lep)$ in the Schr\"odinger case.

\subsubsection{Product estimate}
Finally,  to control the velocity of the vortices, we also need the following result, whose proof is postponed to  Appendix \ref{app1}, and which is an $\ep$-quantitative version of  the ``product estimate" of \cite{ss6}.

We let $M_\ep$  be a quantity such that 
\begin{equation}\label{condM}\forall q>0, \  \lim_{\ep \to 0}  M_\ep \ep^q=0 , \ \lim_{\ep \to 0} \frac{\lep}{M_\ep^q}=0, \ \lim_{\ep \to 0} \frac{\log M_\ep }{\lep}=0.\end{equation}
For example $M_\ep= e^{\sqrt{\lep}}$ will do. 
In the statement below we do not aim at optimality, however we state the result in a way that would allow to  go beyond the range $\N \le O(\lep)$ that we are considering here (for example up to arbitrary powers of $\lep$ or quantities that satisfy the properties \eqref{condM}).  

 \begin{pro}\label{estprod}  Let $u_\ep: [0,\tau]\times \R^2 \to \mathbb{C}$.  Let $\v$ be a solution to \eqref{lime} or \eqref{limg}  and $\phi$ be as in \eqref{defphi}.  Let $\tilde V_\ep$ be as in \eqref{tV}. 
Let $X\in C^{0,1}([0, \tau]\times \R^2, \R^2)$ be a spatial vector field. 
Set  $$F_\ep = \int_0^\tau\( \i |\p_t u_\ep- iu_\ep \N \phi|^2 + \E(t)\,\) dt,$$ and assume $F_\ep \le M_\ep$. 
 Then for any $\Lambda \ge 1$, we have, as $\ep \to 0$,
\begin{multline}\label{V1}
\left|\int_0^\tau \i \tV \cdot X \right| \\ \le \frac{1+C\frac{\log M_\ep}{\lep}}{\lep} \( \frac{1}{\Lambda } \int_0^\tau \i |\p_t u_\ep- iu_\ep \N \phi|^2 +  \Lambda  \int_0^\tau\i |(\nab u_\ep -iu_\ep \N \v) \cdot X |^2 \)  \\ +   
    C \|X\|_{L^\infty}  \(\(\Lambda^3   (1+\|X\|_{C^{0,1}}) M_\ep^{-1/8}  +\ep \N\) \(F_{\ep} + \N^2 \)   +\sqrt{F_\ep \sup_{t\in [0, \tau] } \E(t)    }M_\ep^{-1/8} \)  ,
 \end{multline}where $C$ depends only on the bounds on $\phi$ and $\v$.
\end{pro}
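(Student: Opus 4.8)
The plan is to read $\tV\cdot X$ as twice a gauged space-time Jacobian and to prove the inequality as an $\ep$-quantitative refinement of the product estimate of \cite{ss6}. Writing the covariant derivatives $D_t u_\ep=\p_t u_\ep-iu_\ep\N\phi$ and $Du_\ep=\nab u_\ep-iu_\ep\N\v$, formula \eqref{tV} gives $\tV\cdot X=2\la i D_t u_\ep,(Du_\ep)\cdot X\ra$, which is (twice) the Jacobian of $u_\ep$ in the two-plane of space-time spanned by $\p_t$ and $X$, with connection built from $\N\phi$ and $\N\v\cdot X$. A plain pointwise Cauchy-Schwarz and Young inequality already give $|\int_0^\tau\!\i\,\tV\cdot X|\le\frac1\Lambda\int_0^\tau\!\i\,|D_t u_\ep|^2+\Lambda\int_0^\tau\!\i\,|(Du_\ep)\cdot X|^2$, so the entire point is to gain the extra factor $1/\lep$. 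This gain is the quantization phenomenon: near a vortex moving with velocity $\dot a$ one has $\i|D_t u_\ep|^2\simeq\pi\lep|\dot a|^2$ and $\i|(Du_\ep)\cdot X|^2\simeq\pi\lep|X|^2$, while the velocity contribution $\i\,\tV\cdot X\simeq 2\pi\,\dot a^\perp\cdot X$ carries only one power of $\lep$; thus the Jacobian sees one fewer factor of $\lep$ than the two directional energies, and dividing their product by $\lep$ recovers exactly the quantized velocity, with optimal constant $1$.

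To turn this into a proof I would localize and slice. On small space-time cubes I freeze $X$ to a constant, which is legitimate up to errors proportional to $\|X\|_{C^{0,1}}$ times the cube size and which also repairs the fact that the plane field $\mathrm{span}(\p_t,X)$ is not integrable; on each frozen cube these planes are constant and Fubini reduces matters to two-dimensional slices on which $u_\ep$ is a planar Ginzburg-Landau map with a bounded connection. On each slice I apply the ball-construction and Jacobian machinery exactly in the form of Proposition \ref{lem43} and Lemma \ref{lemjac} (here with $A_\ep$ built from $\N\phi$ and $\N\v\cdot X$): the zeroes are covered by balls of total radius fixed by the scale $M_\ep$ of \eqref{condM}, inside which the slice Jacobian is quantized and contributes the transverse vortex velocities, while outside $|u_\ep|$ is close to $1$ and the Jacobian density is a controlled remainder. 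The product (geometric-mean) structure with parameter $\Lambda$ is then produced by running this lower bound in the frame anisotropically dilated by $\sqrt\Lambda$ in the time direction and $1/\sqrt\Lambda$ in the $X$ direction, under which the Jacobian is invariant while the two directional energies scale by $1/\Lambda$ and $\Lambda$ respectively; equivalently, by a weighted Young inequality with weight $\Lambda$ applied to the quantized slice contributions. Integrating the slice bounds in the transverse variable, summing over cubes, and using the sharp lower bound of \cite{stice1} and \cite[Theorem 4.1]{livre} yields the leading term with coefficient $\frac{1+C\log M_\ep/\lep}{\lep}$, the logarithmic correction arising because the covering balls have total radius of order a power of $M_\ep$ rather than $\ep$.

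It then remains to collect the error terms. Renormalizing $u_\ep$ to its unit-modulus companion and accounting for the vortex core sizes produces the $\ep\N(F_\ep+\N^2)$ contribution; the non-concentrated exterior energy together with the cost of freezing $X$ and of the anisotropic dilation produces the $\Lambda^3(1+\|X\|_{C^{0,1}})M_\ep^{-1/8}(F_\ep+\N^2)$ contribution, the powers of $\Lambda$ being tracked through the dilation and the successive Young splittings; and a Cauchy-Schwarz over the set of ``bad'' cubes, whose total measure is small thanks to the bound $F_\ep\le M_\ep$ and \eqref{condM}, produces the $\sqrt{F_\ep\,\sup_{t}\E(t)}\,M_\ep^{-1/8}$ contribution. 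The main obstacle is obtaining the product structure with the \emph{sharp} constant uniformly: one must run the ball construction at the relatively large radius dictated by $M_\ep$ so that each worldline tube captures essentially its full share $\pi|d_i|\lep$ of energy, yet keep both the Jacobian-estimate error and the distortion of the vortex cores under the $\sqrt\Lambda$-dilation negligible after division by $\lep$. This is precisely what forces the admissible range of $\Lambda$ to be tied to $M_\ep$ (hence the factor $\Lambda^3 M_\ep^{-1/8}$) and requires the slice estimates to hold uniformly in the transverse and time variables.
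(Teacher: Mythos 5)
Your proposal follows essentially the same route as the paper's Appendix A: interpret $\tV\cdot X$ as a component of the space-time gauged Jacobian $J_\ep=d(\la du_\ep-iu_\ep A_\ep,iu_\ep\ra+A_\ep)$ with $A_\ep=\N(\phi,\v)$, freeze $X$ on a partition of unity at scale $M_\ep^{-1/4}$, slice along the frozen direction, run the ball construction for the anisotropic metric weighted by $\Lambda$ and $|X_k|$ (your $\sqrt\Lambda$-dilation), and combine with the Jacobian estimate before summing the errors. The only cosmetic discrepancy is the origin of the $\sqrt{F_\ep\sup_{t}\E(t)}\,M_\ep^{-1/8}$ term, which in the paper comes from a temporal cutoff $\eta$ vanishing near $t=0,\tau$ (needed so the slice Jacobian estimates are tested against compactly supported functions), while the ``bad'' slices with $F_{\ep,k,\sigma}>M_\ep$ are instead handled by setting the quantized measure to zero there and absorbing the resulting error into the $M_\ep^{-1/2}F_{\ep,k,\sigma}$ bound.
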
 The second line in the right-hand side is $o(1)$, so is $\log M_\ep/\lep$.
One can see that optimizing over $\Lambda$ by taking 
$$\Lambda = \( \frac{\int_0^\tau \i |\p_t u_\ep- iu_\ep \N \phi|^2}{ \int_0^\tau \i|(\nab u_\ep -iu_\ep \N \v) \cdot X |^2 } \)^\hal$$
yields a right-hand side in the form of a product (plus error terms), hence the  name ``product estimate".

\subsection{Proof of Theorem \ref{th2}}
We now  present the main proof. 
\subsubsection{Evolution of the modulated energy}
In the  next result,  we take as before $\psi=-|\v|^2 $ in the definition \eqref{defER}, and  insert the equation solved by $\v$ and \eqref{divtt2} into   \eqref{eqlemdte} to obtain the crucial computation. 
\begin{lem}\label{dte22}Let $u_\ep$ solve \eqref{eq} and $\v$ solve \eqref{lime} or \eqref{limg}. Assume that \eqref{condN2} and \eqref{bap} hold. 
Then, for any $t \le \te$, we have 
\begin{equation}\label{5}
  \E(u_\ep(t),t)-\E(u_\ep^0,0) = I_S+I_V+I_E +I_D+ I_d+ I_m+I_v+o(1)
  \end{equation}
  where 
  \begin{align*}
& I_S = 2 \int_0^t\i \T :  \nab \bar\v^\perp\\
 & I_V=-\int_0^t \i \N   \tV \cdot \v  \\
 & I_E=  -\int_0^t \i 2 \N|\v|^2\tilde \mu_\ep \\
 &I_D= 
  -\int_0^t  \i   \frac{\N}{\lep}\left|\p_t u_\ep- iu_\ep \N \phi \right|^2 -
  \N  \la \p_t u_\ep-iu_\ep \N \phi , iu_\ep\ra ( \div \v   - \frac{\N}{\lep} \phi )
 \\
&\qquad \quad - 2 \frac{\N}{\lep} \la \p_t u_\ep- iu_\ep \N\phi, \nab u_\ep -iu_\ep \N \v \ra 
\cdot  \bar\v^\perp \\ & I_d= \int_0^t 
\i  2 \N\bar \v^\perp \cdot  (j_\ep -\N \v) 
 ( \frac{\N}{\lep} \phi- \div \v)
\\
  & I_m= \int_0^t  \i  \N^2  (1-|u_\ep|^2)\(- \p_t |\v|^2  +\v\cdot \nab \phi   + 2 \frac{\N}{\lep}\phi \v\cdot \bar \v^\perp\)  
 \\& I_v= \int_0^t \i 2 \N (j_\ep - \N \v)\cdot   (\v -\bar \v)  \curl \v + 2 \N (\bar \v - \v) \cdot \v \tilde\mu_\ep  . 
\end{align*}
\end{lem}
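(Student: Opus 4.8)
The plan is to mirror the algebraic scheme of Lemma \ref{dte2}, but now with $\a=1$, $\b=0$, using the modulated stress-energy identity \eqref{divtt2} in place of its Gross-Pitaevskii analogue, and crucially replacing $\v$ by the piecewise-constant approximation $\bar\v$ of Lemma \ref{lembl} in the stress-energy pairing. I would start from the exact identity \eqref{eqlemdte} of Lemma \ref{lemdte}, taken with $\chi=\chi_R$ and $\psi=-|\v|^2$. Since all the integrands in factor of $\chi_R$ or $\nab\chi_R$ lie in $L^1$ by Lemmas \ref{lemv} and \ref{apriori}, I would integrate in time over $[0,t]$ and let $R\to\infty$; the $\nab\chi_R$ contributions vanish by \eqref{condchir}, producing the left-hand side $\E(u_\ep(t),t)-\E(u_\ep^0,0)$ up to an $o(1)$ from the cutoff.

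The second step is to insert the limiting equation written in the form \eqref{eqglob}, namely $\p_t\v=\nab\phi-2\v\curl\v$, into the term $\int\chi_R\,\N(\N\v-j_\ep)\cdot\p_t\v$, and to extract the modulated velocity by means of \eqref{tV}: writing $V_\ep=\tV+\N\v\,\p_t|u_\ep|^2-\N\phi\,\nab|u_\ep|^2$ turns $-\int\N V_\ep\cdot\v$ into the term $I_V$ plus two corrections carrying $\p_t|u_\ep|^2$ and $\nab|u_\ep|^2$. I would then substitute the divergence of the modulated stress-energy tensor \eqref{divtt2} to rewrite the first-order error terms $\N^2\v^\perp\curl\v$, $-\N j_\ep^\perp\curl\v$ and $-\N\v^\perp\mu_\ep$; the point, exactly as in the proof of Lemma \ref{dte2}, is that pairing $\div\T$ against a vector field and integrating by parts converts these into a quadratic contraction $\T:\nab(\cdot)^\perp$, controlled by the energy. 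Completing the square $-\frac{\N}{\lep}|\p_t u_\ep|^2+(\text{cross terms})=-\frac{\N}{\lep}|\p_t u_\ep-iu_\ep\N\phi|^2+\dots$ produces the dissipation term $I_D$, while the terms proportional to $(1-|u_\ep|^2)$ are grouped into $I_m$ and those in factor of $\frac{\N}{\lep}\phi-\div\v$ into $I_d$.

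The decisive modification relative to the Gross-Pitaevskii case is that, when integrating $\div\T$ by parts, I must not contract $\T$ with $\nab\v$: because the modulated energy concentrates an amount of order $\pi\N\lep$ on the vortex balls, and $\nab\v=O(1)$ there, the integral $\int\T:\nab\v^\perp$ would carry an uncontrollable contribution of order $\N\lep$ from inside the balls. Instead I would pair against the piecewise-constant field $\bar\v$ of Lemma \ref{lembl}, for which $\nab\bar\v\equiv 0$ on each ball $B_i$, so that $I_S=2\int\T:\nab\bar\v^\perp$ only sees the region outside the balls, where $\T$ is governed by the energy excess $\E-\pi\N\lep$ through \eqref{exces}. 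The difference $\v-\bar\v$ generated by this replacement is collected into $I_v$, harmless because $\|\bar\v-\v\|_{C^{0,\gamma}}$ is exponentially small by Lemma \ref{lembl}. Finally I would replace the genuine vorticity $\mu_\ep$ by the modulated vorticity $\tilde\mu_\ep$ of \eqref{tildemu}, using that $\tilde\mu_\ep-\mu_\ep=\curl((1-|u_\ep|^2)\N\v)$ (Lemma \ref{mvor}); this yields the term $I_E=-\int 2\N|\v|^2\tilde\mu_\ep$, the error being absorbed into $I_m$ and $o(1)$.

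The main obstacle I anticipate is the sheer bookkeeping of the algebraic cancellations: just as in Lemma \ref{dte2}, the computation only closes because the would-be first-order terms (those scaling like $\sqrt{\E}$, or the leading-order $\N\lep$ concentration) cancel or are rewritten as quadratic expressions. The single most delicate point is keeping the leading-order concentration under control, which is precisely what forces the introduction of $\bar\v$ and the attendant correction $I_v$; verifying that every residual term is genuinely $o(1)$ rather than merely $O(1)$ relies on the fine estimates \eqref{mumu}, \eqref{exces} and the smallness of $\v-\bar\v$.
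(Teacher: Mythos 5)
Your proposal is correct and follows essentially the same route as the paper: starting from Lemma \ref{lemdte} with $\psi=-|\v|^2$, completing the square in the dissipation, inserting \eqref{eqglob} and pairing \eqref{divtt2} against $2\chi\bar\v^\perp$ precisely because $\nab\bar\v$ vanishes on the vortex balls, and passing from $\mu_\ep$ to $\tilde\mu_\ep$ via Lemma \ref{mvor}. The identification of the $\bar\v$ substitution as the decisive step, with the $\v-\bar\v$ corrections collected into $I_v$, is exactly the paper's argument.
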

\begin{proof}
Again, we start from the result of Lemma \ref{lemdte} applied with $\psi= -|\v|^2$. The first step is to write 
\begin{multline}\label{pdtu2}
- \i \chi  \frac{\N }{\lep} |\p_t u_\ep|^2 \\= -\i   \chi \frac{\N}{\lep} \(|\p_t u_\ep -iu_\ep \N \phi|^2 - \N^2  |u_\ep|^2 \phi^2 +2  \N \phi \la  \p_t u_\ep,iu_\ep \ra \).\end{multline}
The second step is to use  \eqref{eqglob} to write \begin{multline}\label{3}
 \i \chi  \( \N^2 \v\cdot \p_t \v  -  \N j_\ep \cdot \p_t \v \)= 
\i \chi \N^2 (- 2 |\v|^2  \curl \v + \v \cdot \nab \phi )\\
 -\i \chi  \N j_\ep \cdot (-2 \v \curl \v+ \nab \phi).
\end{multline}
On the other hand,
integrating by parts and using \eqref{divji}, we have 
\begin{multline}\label{jf}
 \i \chi \N (\N \v-j_\ep) \cdot \nab \phi \\= - \i \N \nab \chi  \cdot (\N \v - j_\ep)  \phi -\i \N^2\chi \( \div \v - \frac{1}{\lep} \la \p_t u_\ep, iu_\ep\ra \)\phi.\end{multline}
 
 Next, we would like to transform the  linear term  
 $\i \chi (-2\N^2|\v|^2 \curl \v + 2\N j_\ep \cdot \v \curl \v)$ into a quadratic term plus error terms. For that, we would like to multiply the result of Lemma \ref{divst} by $2  \v^\perp$, which after integration by parts leads to terms in $\i  \T : \nab \v^\perp$. Using $\bar \v$ rather than $\v$ leads instead to integrals that live only {\it outside of the balls} since $\nab \bar \v=0$ there by item  1 of Lemma \ref{lembl}. These terms  will thus  be controlled by the energy outside of the balls, i.e.~the excess energy, as desired.  
This creates an additional  set of error terms in $\bar \v - \v$, which we will control thanks to item 2 in Lemma \ref{lembl}.

So as explained,  let us  multiply the result of Lemma \ref{divst} by $2 \chi \bar\v^\perp$, where $\bar \v$ is given, for each time $t$, by the result of Lemma \ref{lembl}.  This yields 
\begin{align*}
&\i  2\chi  \bar \v^\perp \cdot \div \T(u_\ep) \\ &
=  \i \chi     \left(   2\frac{\N}{\lep} \la \p_t u_\ep- iu_\ep \N\phi , \nab u_\ep-iu_\ep \N \v \ra   \right) \cdot   \bar\v^\perp
\\ &  +\i \chi\( 2 \N^2 |\v|^2 \curl \v  -2 \N j_\ep \cdot \v)  \curl \v
  -2\N|\v|^2 \mu_\ep \right)\\
  & + \i \chi \( 2 \N^2 (\bar \v -\v) \cdot \v \curl \v -2 \N j_\ep \cdot  (\bar \v - \v) \curl \v 
 \right. \\
  \nonumber & \qquad \left. - 2 \N (\bar \v - \v) \cdot \v \mu_\ep\)
  \\ & + 
  \i \chi \(2 \N^2   \v \cdot  \bar \v^\perp(\div \v - \frac{\N}{\lep}|u_\ep|^2 \phi )+ 2\N j_\ep\cdot  \bar \v^\perp (\frac{\N}{\lep} \phi-\div \v)\) .
\end{align*}
Inserting \eqref{tV}, \eqref{pdtu2},  \eqref{3}  and \eqref{jf} into the result of Lemma \ref{lemdte} applied with $\psi= -|\v|^2$,  taking advantage of the  cancellations and using one integration by parts,  we obtain 
\begin{align}\label{4}
  &\frac{d}{dt}
  \hat\E(u_\ep)
 =  - \i \chi   \frac{\N}{\lep} \(\left|\p_t u_\ep- iu_\ep \N \phi \right|^2 - \N^2  |u_\ep|^2|\phi|^2 \)\\
\nonumber &  -\i \nab \chi  \cdot ( \la \nab u_\ep-iu_\ep \N \v , \p_t u_\ep\ra  +\N(\N \v - j_\ep)  \phi  - 2 \T \bar \v^\perp )
\\   \nonumber &
+\i \chi \(\N  \la \p_t u_\ep, iu_\ep\ra \Big( \div \v   - \frac{\N}{\lep} \phi \Big)- \N^2 \phi \div \v \)
\\
\nonumber & +\i \chi  \(  \N ( - \tV - \N \v \p_t |u_\ep|^2 + \N \phi \nab |u_\ep|^2)  \cdot    \v    +2\T :  \nab \bar\v^\perp   \)    \\
 \nonumber &+ \i  \chi \( 2\frac{\N}{\lep} \la \p_t u_\ep- iu_\ep \N\phi, \nab u_\ep -iu_\ep \N \v \ra \cdot \bar\v^\perp       -2 \N|\v|^2 \mu_\ep  \right)  
\\
\nonumber   & + \i \chi \( 2 \N^2 (\bar \v -\v) \cdot \v \curl \v -2 \N j_\ep \cdot (\bar \v - \v) \curl \v - 2 \N (\bar \v - \v) \cdot \v \mu_\ep \)
 \\
\nonumber &
+\i \chi  2  \N \bar \v^\perp \cdot \( (j_\ep -\N \v) 
 ( \frac{\N}{\lep} \phi- \div \v) +  \frac{\N}{\lep}  \N(1-|u_\ep|^2) \phi  \v\)
 \\  \nonumber &- \i \chi  \N^2 \p_t ((1-|u_\ep|^2) |\v|^2 ) . 
\end{align}
Let us make three transformations to this expression. 
First, let us  single out  the terms
\begin{multline*}
\i \chi\N^2(- |\v|^2\p_t |u_\ep|^2  +  \phi \v\cdot \nab |u_\ep|^2 -\p_t( (1-|u_\ep|^2) |\v|^2)\\ = \i \chi \N^2   (1-|u_\ep|^2)  (-\p_t |\v|^2 +\nab \phi \cdot \v  +\phi \div \v )+\i   \N^2 \nab \chi \cdot \v  (1-|u_\ep|^2)\phi
\end{multline*} (where we have used an integration by parts).

Second, let us  replace $\la \p_t u_\ep, iu_\ep\ra$ by $\la \p_t u_\ep -iu_\ep \N \phi, iu_\ep \ra +\N |u_\ep|^2 \phi$ in  \eqref{4}, this leads to a cancellation of the terms in $(1-|u_\ep|^2) \phi \div \v$ and in $|u_\ep|^2\phi^2$. 

Third, owing to \eqref{mumu} let us replace $-2 \i \chi \N |\v|^2 \mu_\ep$ by $$-2\i \chi \N |\v|^2 \tilde \mu_\ep + O \(  (\|\nab \chi\|_{L^2} + \|\nab \v \|_{L^\infty} \| \v\|_{L^2})\N(\ep \sqrt{\E(u_\ep)}+  \ep^4 \N^2) \)$$
and the same for $-2\i \N (\bar \v-\v) \cdot \v \mu_\ep$. Both give rise to $o(1)$ error terms by \eqref{condN2} and \eqref{bap}.

After these substitutions, let us  integrate in time and take $\chi=\chi_R$. We may check via Lemmas \ref{apriori} and \ref{lemv}, and the fact that $\bar \v -\v$ is compactly supported,  that all integrands in factor of $\chi_R$ or $\nab \chi_R$ are in $L^1$.
Letting $R \to \infty$, we then get the conclusion.
\end{proof}
We next turn to studying these terms one by one. We will show that $I_d, I_m,I_v$ are all negligible terms, while $I_V, I_E$ and $ I_D$ recombine algebraically thanks to the product estimate, to give a term bounded by the energy outside the balls, as does $I_S$. 

\subsubsection{The negligible terms}
Let us start with $I_d$.   In the case $\N \ll \lep$, from \eqref{defphi} we have $\phi= \pp$ and we also have $\div \v=0$. Then, in view of \eqref{bap} and \eqref{contj} we may bound 
\begin{multline*}|I_d|\le  2\i\N \frac{\N}{\lep} |\bar \v| |j_\ep -\N \v| |\pp|\\ \le  C  \|\bar \v\|_{L^\infty} \|\pp\|_{L^2} \sqrt{\pi \N \lep + \N^2} \frac{\N^2}{\lep} =o(\N^2),\end{multline*}
where we used  the boundedness of $\v$ hence of $\bar \v$ and the $L^2$ character of $\pp$ (see Lemma \ref{lemv}).
In the case $\frac{\lep}{\N} \to \lambda>0$ finite (see \eqref{defl}), we have  $\phi = \lambda \div \v$ and  
\begin{equation}\label{divvv}
\frac{\N}{\lep} \phi - \div \v=   (\frac{\lambda \N}{\lep}-1) \div \v =o(1) |\div \v|.\end{equation}
  We again conclude easily that $|I_d|\le o(\N^2)$ in that regime too.

The term $I_m$ is easily seen to be $o(\N^2)$,  using the Cauchy-Schwarz inequality,  \eqref{bap} and Lemma \ref{contrmod} and the  integrability of   $\p_t \v$,  $\pp$,   $\nab \pp$,   $\div \v$ and $\nab \div \v$ provided by Lemma \ref{lemv}. 

To bound the first terms of $I_v$ we first use \eqref{pjj} and the fact that $\v\in C^{1,\gamma}$ to get
\begin{multline}\label{above}
\left|\i (j_\ep -\N \v) \cdot (\v -\bar \v) \curl \v\right| \le  C
 \|\v - \bar \v\|_{L^\infty} \Big(\i |\nab u_\ep -iu_\ep\N \v| |\curl \v|\\
 +  |1-|u_\ep|| |\nab u_\ep - iu_\ep \N \v| + \N^2  |1-|u_\ep|^2| |\curl \v|\Big)\end{multline}
 Using  the Cauchy-Schwarz inequality, \eqref{contmod},  \eqref{bap}, \eqref{condN2}, and the $L^2$ character of $\curl \v$ (by Lemma \ref{lemv}, $\curl \v \in L^1(\R^2)\cap L^\infty(\R^2)$), we find that the last two terms in the right-hand side integral give $o(1)$ terms. To bound the contribution of the first term,  we split the integral over $\R^2$ using the balls $B_i$ given by Proposition \ref{lem43} as follows~:
 \begin{multline*}
 \i |\nab u_\ep -iu_\ep\N \v| |\curl \v|\\ \le 
  \int_{\cup_i B_i} |\nab u_\ep -iu_\ep \N \v||\curl \v|+ \int_{\R^2 \backslash \cup_i B_i} |\nab u_\ep -iu_\ep \N \v||\curl \v|\\
\le  C\|\nab u_\ep - iu_\ep \N \v\|_{L^{2,\infty}}|\cup_i B_i|+
\(\int_{\R^2 \backslash \cup_i B_i} |\nab u_\ep -iu_\ep \N \v|^2\)^{\hal}  \|\curl \v\|_{L^2(\R^2)},\end{multline*}
where we used  \eqref{l2infini} and the boundedness of $\curl \v$. In view of \eqref{exces},  \eqref{contlor},  \eqref{bap} and item 1 of Proposition \ref{lem43},  we deduce that 
$$ \i |\nab u_\ep -iu_\ep\N \v| |\curl \v|\le C e^{-2\sqrt{\N}}\N + C \N$$
and inserting into \eqref{above} and using item 2 of Lemma \ref{lembl}, we deduce that 
$$\left|\i (j_\ep -\N \v) \cdot (\v -\bar \v) \curl \v\right| \le o(1).$$

For the second term of $I_v$ we apply item 3   of Lemma \ref{blocal} with $\xi=\v-\bar \v$ to obtain 
\begin{multline*}
\i (\v -\bar \v) \tilde \mu_\ep =2\pi  \sum_i d_i (\v-\bar \v)(a_i)+   o(1) \|\v-\bar \v\|_{C^{0,\gamma}}\\ \le  2\pi \sum_i |d_i| \|\v-\bar \v\|_{L^\infty}  +o(1)\|\v-\bar \v\|_{C^{0,\gamma}}  = o(1),\end{multline*}
where we used item 2 of Lemma \ref{lembl} and the fact that $\sum_i|d_i|\le C \N$ for these balls. 
We deduce that $I_v=o(1)$ and conclude that $I_d+I_m+I_v=o(\N^2)$.
\subsubsection{The dominant terms}
The term $I_S$ can easily  be treated with the help of \eqref{borneS}. Using that from Lemma \ref{lembl}, $\nab \bar \v$ vanishes outside of the balls $B_i$  given by Proposition \ref{lem43}  and is bounded otherwise by a constant depending on $\v$,
and using  \eqref{borneS},  \eqref{exces} and Lemma \ref{contrmod}, we may write for each $t \le \te$,
\begin{align*}
& \left|\i  2\T:  \nab \bar\v^\perp \right|\\ & \le 2 \|\nab \bar\v\|_{L^\infty} \int_{\R^2 \backslash \cup_i B_i}  |\nab u_\ep - iu_\ep \N\v|^2 + \frac{1}{4\ep^2}(1-|u_\ep|^2)^2 + \N^2|\v|^2 | 1-|u_\ep|^2|\\
& \le C(\E(u_\ep(t))- \pi \N \lep)  + o(\N^2).\end{align*}
We thus conclude that 
\begin{equation}\label{IS}
|I_S|\le C\(\int_0^t \E(u_\ep(s))- \pi \N \lep \) +o(\N^2).\end{equation}

For the term $I_D$ we replace $\bar \v^\perp $ by $\v^\perp + (\bar \v- \v)^\perp$, and using Young's inequality, we write
\begin{multline} \label{IC1}  
  I_D  \le   \frac{\N}{\lep}   \( -\hal  \int_0^t\i \left|\p_t u_\ep -  iu_\ep\N\phi \right|^2 +  2  \int_0^t \i \left| ( \nab u_\ep-iu_\ep \N \v) \cdot  \v^\perp  \right|^2\)\\
  \quad   +  \|\bar \v-\v\|_{L^\infty} \frac{\N}{\lep}\(  \int_0^t\i \left|\p_t u_\ep -  iu_\ep\N\phi \right|^2 +  \int_0^t \i \left|  \nab u_\ep-iu_\ep \N \v  \right|^2 \) \\
  +  \int_0^t\i  \N   | \p_t u_\ep-iu_\ep \N \phi | |\div \v   - \frac{\N}{\lep} \phi | .
\end{multline}
We claim that 
\begin{multline}\label{3ligne}
 \int_0^t\i  \N   | \p_t u_\ep-iu_\ep \N \phi | |\div \v   - \frac{\N}{\lep} \phi |\\ \le o\( \frac{\N}{\lep}\)\int_0^t \i \left|\p_t u_\ep-i u_\ep \N \phi \right|^2+o(\N^2).\end{multline}
Indeed, either $\N \ll \lep$, in which case $\div \v =0$ and $\phi=\pp$  and the result follows from the Cauchy-Schwarz inequality after inserting a factor $$\frac{1}{\sqrt{\N}} \( \frac{\N}{\lep}\)^{1/4} \sqrt{\N} \( \frac{\lep}{\N}\)^{1/4}$$ and the $L^2$ character of $\pp$; or $\lep/\N \to \lambda $ and 
we may use \eqref{divvv} and the $L^2$ character of $\div \v$ to conclude the same.
On the other hand, by \eqref{bap} and item~2 of Lemma \ref{lembl}, we have 
\begin{multline}\label{4ligne}
 \|\bar \v-\v\|_{L^\infty} \frac{\N}{\lep} \(   \int_0^t\i \left |\p_t u_\ep -  iu_\ep\N\phi \right|^2 +  \int_0^t \i \left|  \nab u_\ep-iu_\ep \N \v  \right|^2 \) \\ \le  C e^{-\sqrt{\N}}    \frac{\N}{\lep} \int_0^t\i \left |\p_t u_\ep -  iu_\ep\N\phi \right|^2+o(1).\end{multline}

\noindent
We next distinguish two cases:
\\
{\it Case 1:} the case where 
 $$\int_0^t \i |\p_t u_\ep-  i u_\ep\N \phi |^2\le 20 \int_0^t \i |(\nab u_\ep -iu_\ep \N \v) \cdot  \v|^2 .$$
By \eqref{bap} this implies  that  
\begin{equation}\label{bornintt}
\int_0^t \i |\p_t u_\ep-  i u_\ep \N \phi|^2\le 20 \|\v\|_{L^\infty} ( \pi \N \lep + \N^2) .\end{equation}
It follows,  together with   \eqref{bap} and  \eqref{condN2} that $F_\ep \le C\N\lep$ where $F_\ep $ is as  in Proposition \ref{estprod}. From Lemma \ref{lemv} we have that $\p_t \v$ is uniformly bounded while $\v \in C^{1,\gamma}$ in space, hence $\v$ is Lipschitz in space-time, so we may apply Proposition \ref{estprod}   with $M_\ep= e^{\sqrt{\lep}}$, $\tau=t$, $\Lambda=2$ and $X= \v $  
to obtain \begin{multline}\label{V2}
I_V\le  \frac{\N}{\lep}  \( \hal \int_0^t \i \left|\p_t u_\ep-i u_\ep \N \phi \right|^2 \right. \\
\left. +2\int_0^t \i |(\nab u_\ep -iu_\ep \N \v ) \cdot\v |^2   \)+o(\N^2).   \end{multline}
\medskip

\noindent
{\it Case 2:} the case where 
$$\int_0^t \i |\p_t u_\ep-  i u_\ep\N \phi |^2\ge 20 \int_0^t\i |(\nab u_\ep -iu_\ep \N \v) \cdot \v |^2 .$$ We may rewrite that condition as  \begin{multline}\label{V21}\frac{1}{4} \int_0^t \i |\p_t u_\ep  -  i u_\ep \N \phi |^2+ 4 \int_0^t \i |(\nab u_\ep -iu_\ep \N \v) \cdot \v |^2 \\ \le \(\frac{1}{4}+\frac{1}{10}\)   \int_0^t \i |\p_t u_\ep  - i u_\ep \N \phi|^2+ 2 \int_0^t \i |(\nab u_\ep -iu_\ep \N \v) \cdot \v |^2.\end{multline}
We note that in that situation, thanks to Lemma \ref{aprioridtu} and the $L^2$ character of $\phi$, we have 
$F_\ep \le  2\int_0^t \i |\p_t u_\ep-  i u_\ep \N \phi|^2\le C \N^3 \lep^3  $, where $F_\ep$ is as in Proposition~\ref{estprod}. Choosing $M_\ep= e^{\sqrt{\lep}}$ we may then apply that proposition  with $M_\ep= e^{\sqrt{\lep}}$, $\tau=t$, $\Lambda=4$ and $X=\v$, and combining the result with \eqref{V21}, we are led to 
\begin{multline}\label{V22}
I_V \le  \frac{\N}{\lep}   (\frac{1}{4}+\frac{1}{10}) \int_0^t \i \left|\p_t u_\ep-  i u_\ep \N\phi\right|^2\\
 +\frac{2\N}{\lep} \int_0^t \i |(\nab u_\ep -iu_\ep \N \v ) \cdot\v |^2    +o\(\frac{\N}{\lep}\) \(  \int_0^t \i |\p_t u_\ep  - i u_\ep \N\phi |^2  \) 
 +o(1).   \end{multline}
 This implies  that
 \begin{multline}\label{V3}
I_V\le  \frac{\N}{\lep}  \( \hal \int_0^t \i \left|\p_t u_\ep-i u_\ep \N \phi \right|^2 
+2\int_0^t \i |(\nab u_\ep -iu_\ep \N \v ) \cdot\v |^2   \) \\ -  \frac{1}{8}  \frac{\N}{\lep}\int_0^t \i \left|\p_t u_\ep-i u_\ep \N \phi \right|^2+o(1).   \end{multline}

Returning to the general situation, 
we may now combine in the first case \eqref{IC1}, \eqref{3ligne}, \eqref{4ligne}, \eqref{bornintt} and \eqref{V2}, and in the second case \eqref{IC1}, \eqref{3ligne}, \eqref{4ligne} and \eqref{V3}. Noticing an exact recombination of the terms
\begin{multline*} -\hal  \int_0^t\i \left|\p_t u_\ep -  iu_\ep\N\phi \right|^2 +  2  \int_0^t \i \left| ( \nab u_\ep-iu_\ep \N \v) \cdot  \v^\perp  \right|^2\\+ \hal \int_0^t\i \left|\p_t u_\ep -  iu_\ep\N\phi \right|^2+ 2 \int_0^t \i \left| ( \nab u_\ep-iu_\ep \N \v) \cdot  \v \right|^2,\end{multline*}
we obtain in both cases that
 \begin{equation}\label{ICIV}
I_D+I_V\le  \frac{2\N}{\lep}  \int_0^t \i |\nab u_\ep- iu_\ep \N \v|^2 |\v|^2+o(\N^2).
\end{equation}
On the other hand, from \eqref{exces2} applied with $\xi=|\v|^2$, we have
\begin{multline*}2\N \i  \frac{1}{\lep}|\nab u_\ep- iu_\ep \N \v|^2|\v|^2  -|\v|^2\tilde \mu_\ep\\ \le( \|\v\|_{L^\infty}^2+1) \frac{\N}{\lep} (\E(u_\ep)-\pi \N \lep)+o(\N^2),\end{multline*} so using that $\N \le O(\lep)$ and combining with \eqref{ICIV}, we obtain that 
\begin{equation}\label{IdIv}
I_V+I_D+I_E\le    C \int_0^t \(\E(u_\ep(s))- \pi \N \lep\)\, ds +o(\N^2).\end{equation}
Let us point out that  this is the only place in the proof where we really are limited to the situation where $\N \le O(\lep)$. 
\subsubsection{The Gronwall argument}
Combining \eqref{IdIv} with \eqref{IS} and the result on the negligible terms, we are led to 
$$\E(u_\ep(t))- \E(u_\ep^0) \le C \int_0^t \(\E(u_\ep(s))- \pi \N \lep\)\, ds +o(\N^2)$$
and this holds for any $t \le \te$.

In view of the assumption on the initial data,  Gronwall's lemma immediately  yields that 
\begin{equation}\label{et}
\E(u_\ep(t),t)\le \pi \N \lep +o(\N^2)\end{equation} for all $t\le \te$. Thus we must have $\te=\min(1, \t)$ and we may extend the argument up to time $\t$ to obtain that \eqref{et} holds until $\t$. This proves the first assertion of Theorem \ref{th2}.
 
 \subsubsection{The convergence result}
To conclude the proof of Theorem \ref{th2}, there remains to check that this implies that $\frac{\la \nab u_\ep, iu_\ep\ra }{\N} \to \v$ in $L^p_{loc}(\R^2)$ for $p<2$.
In view of \eqref{exces}, \eqref{et} implies that for every $t \le \t$, 
\begin{equation}\label{bextb}
\int_{\R^2 \backslash \cup_{i} B_i} |\nab u_\ep - iu_\ep \N \v|^2 \le o(\N^2),\end{equation}
hence for any ball   $B_R$ centered at the origin and any $1\le p<2$, by H\"older's inequality, 
\begin{equation}\label{intbr}
\int_{B_R \backslash \cup_{i} B_i} |\nab u_\ep-iu_\ep \N \v|^p \le o(\N^p).\end{equation} 
Meanwhile, by \eqref{contlor} 
and using the embedding of $L^{2,\infty}(B_R)$ into $L^q(B_R)$ for any $q<2$,  by H\"older's inequality and  item 1 of Proposition \ref{lem43}, we deduce that for any $p<q<2$, 
$$\int_{\cup_{i}B_i} |\nab u_\ep -iu_\ep\N \v|^p \le o(1).$$
Combining this with \eqref{intbr}, we conclude 
 that  $\frac{1}{\N} (\nab u_\ep-iu_\ep\N \v) \to 0$ in $L^p_{loc}(\R^2)$.

In the case $\frac{\lep}{\N}\to \lambda$, with  \eqref{contmod} and the upper bound \eqref{et}, we have in addition  that  $\frac{1}{\N} (\nab u_\ep - iu_\ep \N \v)$ is bounded in $L^2(\R^2)$. It thus has a weak  limit $f$, up to extraction of a subsequence $\ep_k$.  As in \cite[p. 151]{livre}, letting $\Omega_\ep$ denote  $\cup_i B_i$ for each given $\ep$,  since $\sum_i r_i \le e^{-\sqrt{\N}}$, we may extract a further subsequence such that  $\mathcal A_n:= \cup_{k\ge n} \Omega_{\ep_k}$ has Lebesgue measure tending to $0$ as $n \to \infty$. For any fixed $n$, by weak convergence we have 
$$\liminf_{k\to \infty} \int_{\R^2 \backslash \Omega_{\ep_k} }\frac{|\nab u_\ep -iu_\ep \N \v|^2 }{\N^2}\ge  \liminf_{k\to \infty} \int_{\R^2 \backslash \mathcal A_n}\frac{ |\nab u_\ep -iu_\ep \N \v|^2 }{\N^2}\ge \int_{\R^2 \backslash \mathcal A_n}|f|^2,$$
but the left-hand side is equal to $0$ by \eqref{bextb}, so letting $n \to \infty$,  we deduce that $f$ must be $0$. Since this is true for any subsequence, we conclude that $\frac{1}{\N} (\nab u_\ep - iu_\ep \N \v)$  converges weakly in $L^2$ to $0$. 

The appropriate convergence of $j_\ep/\N$ is then deduced by \eqref{contjl1}  \eqref{contj} and \eqref{pjj},  and this concludes the proof of Theorem \ref{th2}.
\begin{remark}\label{remark410}
In order to treat the mixed flow or complex Ginzburg-Landau case, the computations are very similar and  shown in Appendix \ref{gauge} below in the gauge case. One should multiply the result of Lemma \ref{divst} by $2\b\v+ 2\a\v^\perp$ instead of $\v^\perp$, and use $\phi = \pp $  or  $\frac{\lambda}{\alpha} \div \v$ (respectively), and $\psi=\beta  \phi- |\v|^2$. 
A supplementary error term in $O(\beta \i \tilde V_\ep \cdot (\v-\bar\v))$ appears, which can be controlled only by an  estimate on $\i |\tilde V_\ep|$ and leads to the extra condition  $\N\gg \llep$.  \end{remark}

\appendix

  \section{Proof of Proposition \ref{estprod}}\label{app1}
As already mentioned, the result is a quantitative  version of the  ``product estimate" of \cite{ss6}. It also needs to be adapted to the case of an infinite domain, which we do by a localization procedure based on a partition of unity.

As in \cite{ss6} we  view things in three  dimensions where the first dimension is time and the last  two are spatial dimensions.
By analogy with a gauge, we introduce the vector-field in three-space
\begin{equation}\label{defAA}
A_\ep= \N(\phi, \v),\end{equation}
whose first  coordinate is $\N \phi$ and whose last two coordinates are those of $\N \v$.
 Equivalently, we can identify $A_\ep$ with a 1-form.
 We also note that \begin{equation}\label{A1bis}
 \frac{1}{\N} \curl A_\ep= (\curl \v, \p_t \v_1 - \p_1 \phi ,\p_t \v_2-\p_2 \phi).\end{equation} 

  We then define  the 2-form  \begin{equation}\label{J2}
J_\ep= d \( \la d u_\ep -iu_\ep A_\ep,iu_\ep \ra +  A_\ep \) ,\end{equation}where $d$ corresponds to the differential in three-space.
\begin{lem} Identifying a spatial vector-field $X$  with the 2-form $X_2 dt\wedge dx_1 + X_1 dt\wedge dx_2$, we have
\begin{equation}\label{Jep} J_\ep =  \tilde \mu_\ep dx_1 \wedge dx_2 + \tilde V_\ep + (1-|u_\ep|^2) \N (\p_t \v-\nab \phi) ,\end{equation}  where $\tilde \mu_\ep $ is as in \eqref{tildemu} and $\tilde V_\ep$ as in \eqref{tV}.
\end{lem}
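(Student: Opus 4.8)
The plan is to compute the three-dimensional exterior derivative $J_\ep = d\omega$ directly in coordinates $(t,x_1,x_2)$, where $\omega := \la du_\ep -iu_\ep A_\ep, iu_\ep\ra + A_\ep$ is the $1$-form being differentiated, and then read off its three components against $dx_1\wedge dx_2$, $dt\wedge dx_1$ and $dt\wedge dx_2$. First I would write $\omega = \omega_0\, dt + \omega_1\, dx_1 + \omega_2\, dx_2$ and simplify each coefficient. Using $\la iu_\ep, iu_\ep\ra = |u_\ep|^2$ and the definition of $j_\ep$, the spatial coefficients become $\omega_k = (j_\ep)_k + \N \v_k(1-|u_\ep|^2)$ for $k=1,2$, while the temporal one becomes $\omega_0 = \la \p_t u_\ep, iu_\ep\ra + \N \phi(1-|u_\ep|^2)$.

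The coefficient of $dx_1\wedge dx_2$ in $d\omega$ is $\p_1\omega_2-\p_2\omega_1=\curl(\omega_1,\omega_2)$, which is exactly $\tilde\mu_\ep$ by its definition \eqref{tildemu}; this accounts for the first term on the right-hand side of \eqref{Jep}. The substance of the computation is the two mixed coefficients $\p_t\omega_k-\p_k\omega_0$ for $k=1,2$. Expanding them with the simplified expressions and inserting \eqref{dtj} in the form $\p_t(j_\ep)_k = \p_k\la \p_t u_\ep, iu_\ep\ra + (V_\ep)_k$, the key algebraic point is that the two second-order mixed terms $\p_k\la \p_t u_\ep, iu_\ep\ra$ (one arising from $\p_t j_\ep$, one from $\p_k\omega_0$) cancel exactly. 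What remains is
$$\p_t\omega_k-\p_k\omega_0 = (V_\ep)_k - \N \v_k\,\p_t|u_\ep|^2 + \N \phi\,\p_k|u_\ep|^2 + \N(1-|u_\ep|^2)(\p_t\v_k-\p_k\phi).$$
Comparing the first three terms with the second expression for $\tV$ in \eqref{tV} shows they are precisely $(\tV)_k$, so $\p_t\omega_k-\p_k\omega_0 = (\tV)_k + \N(1-|u_\ep|^2)(\p_t\v_k-\p_k\phi)$.

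Finally I would match coefficients against the identifications fixed in the statement: a spatial vector field $X$ corresponds to $X_2\, dt\wedge dx_1 + X_1\, dt\wedge dx_2$, so $\tV$ contributes $(\tV)_2$ to the $dt\wedge dx_1$ slot and $(\tV)_1$ to the $dt\wedge dx_2$ slot, and likewise for the vector field $\N(1-|u_\ep|^2)(\p_t\v-\nab\phi)$. Reassembling the three pieces of $d\omega$ then yields exactly \eqref{Jep}. This is a purely computational lemma, and the only non-routine step is spotting the cancellation enabled by \eqref{dtj}; without it the mixed coefficients would retain an unwanted second derivative $\p_k\la \p_t u_\ep, iu_\ep\ra$ of $u_\ep$, and the clean splitting into $\tV$ plus the $(1-|u_\ep|^2)$-weighted term would fail.
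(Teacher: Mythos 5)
Your computation is correct and follows essentially the same route as the paper: a direct coordinate computation of the three components of $d\omega$, with the mixed second-derivative terms $\p_k\la \p_t u_\ep, iu_\ep\ra$ cancelling (in the paper this cancellation is carried out by hand after expanding the product rule, while you package it via \eqref{dtj}, which encodes exactly the same identity). The only cosmetic difference is that you pre-simplify the coefficients of the $1$-form to $j_\ep+\N\v(1-|u_\ep|^2)$ and $\la \p_t u_\ep,iu_\ep\ra+\N\phi(1-|u_\ep|^2)$ and match against the second expression in \eqref{tV}, whereas the paper works with the modulated form throughout and matches against the first.
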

\begin{proof} By definition,  $J_\ep = J_\ep^t dx_1 \wedge dx_2+ J_\ep^2 dt \wedge dx_1 + J_\ep^1 dt \wedge dx_2$, where for $k=1,2$,
\begin{equation*}
\left\{ \begin{array}{l}
J_\ep^t = \curl (\la \nab u_\ep-iu_\ep \N \v, iu_\ep\ra + \N \v)= \tilde \mu_\ep \\
J_\ep^k=\p_t ( \la \p_k u_\ep -iu_\ep \N \v_k, iu_\ep\ra + \N \v_k)- \p_k (\la \p_t u_\ep -iu_\ep\N \phi, iu_\ep \ra +\N \phi) 
\end{array}\right.\end{equation*}
To obtain the expression of $J_\ep$, it thus suffices to compute 
\begin{align*}
& \p_t (\la \nab u_\ep -iu_\ep \N \v, iu_\ep\ra + \N \v)- \nab(  \la  \p_t u_\ep-iu_\ep\N \phi , iu_\ep\ra + \N  \phi)\\
& = \la \nab u_\ep -iu_\ep \N \v, i\p_t u_\ep \ra + \N \p_t \v + \la \p_t (\nab u_\ep -iu_\ep \N \v) , iu_\ep\ra  -\la \nab \p_t u_\ep, iu_\ep\ra\\ & -\la \p_t u_\ep, i\nab u_\ep\ra 
+ \nab ((|u_\ep|^2-1)\N\phi)
\\ & = 2\la \nab u_\ep -iu_\ep \N \v, i\p_t u_\ep \ra 
+ \N \v \la u_\ep, \p_t u_\ep\ra +(1-|u_\ep|^2) \N \p_t \v- \N \v \la \p_t u_\ep, u_\ep\ra\\ &+ \N \nab ((|u_\ep|^2-1) \phi)
\\& =
\tilde V_\ep-  2 \N\phi\la   \nab u_\ep, u_\ep\ra  +(1-|u_\ep|^2) \N \p_t \v+  \N\nab ((|u_\ep|^2-1) \phi)
\end{align*} where we used \eqref{tV}, and this yields the result. 
\end{proof}

 We work in the space-time slab $[0,\tau ]\times\R^2$. 
We consider $X$ (here a spatial vector field,  depending on time)  and $Y$ (here $Y=e_t$ the unit  vector of the time coordinate)  two  vector fields  on $[0,\tau]\times\R^2$.  
In order to reduce ourselves to the situation where $X$ is locally constant, we use a partition of unity at a small scale: 
let $M_\ep$ be as in \eqref{condM} and let us consider a covering of $[0, \tau]\times \R^2$ by balls of radius $2 M_\ep^{-1/4}$ centered at points of $M_\ep^{-1/4}\mathbb{Z}^3$, and let $\{D_k\}_{k \in \mathbb{N}}$ be an indexation of this sequence of balls and  $\{\chi_k\}_{k\in \mathbb{N}}$  a partition of unity associated to this covering (which we observe has bounded overlap) such that $\sum_{k\in \mathbb{N}} \chi_k= 1$ and  $\|\nab \chi_k\|_{L^\infty} \le M_\ep^{1/4}$. 
For each $k\in \mathbb{N}$, let then $X_k$ be the average of $X$ in $D_k$.    Then, working only in $D_k$, without loss of generality, we can assume that $X_k$ is aligned with the first space coordinate vector $e_1$, with  $(e_t, e_1, e_2)$ forming an orthonormal frame and the coordinates in that frame  being denoted by  $(t, w,\sigma)$. We will assume first that $X_k\neq 0$. 
  Let us define for each $k,\sigma$ the set
 $$\Omega_{k,\sigma}= \{(t,w)|(t, w,\sigma) \in  D_k\},$$  which is a slice of $D_k$ (hence a two-dimensional ball). 
 Let us  write 
 $J_{\ep, k,\sigma}$ for $J_\ep(e_1, e_t)$ restricted to $\Omega_{k,\sigma}$.  In other words, by \eqref{J2}, if $\xi $ is a smooth test-function on $\Omega_{\sigma,k}$, we have
  \begin{equation}\label{Jeps} \int_{\Omega_{k,\sigma}} \xi \wedge J_{\ep, k, \sigma}= - \int_{\Omega_{k,\sigma}}  d\xi \wedge\(  \la d u_\ep -iu_\ep A_\ep, iu_\ep\ra +A_\ep\) \end{equation} 
where $d$ denotes the  differential in the slice $\Omega_{k,\sigma}$.
 
 We  let  $g_k$ be  the constant metric on $\Omega_{k,\sigma}$ defined by  $g_k(e_1,e_1)=\Lambda/|X_k| $,  $ g_k(e_t, e_t )=1/\Lambda$ and $g_k(e_1, e_t )=0$  with $\Lambda \ge 1$ given.
 
We then apply the ball construction  method in each set $\Omega_{k,\sigma}$.  Instead of constructing balls for the flat metric, we construct geodesic balls for the metric associated to $g_k$, i.e.~here, ellipses.  
\begin{lem}\label{52}Let $\Omega_{k,\sigma}\subset \R^2$ be as above and  denote $\Omega^\ep_{k,\sigma}=\{x\in \Omega_{k,\sigma} | \dist (x, \p \Omega_{k,\sigma})>\ep\}$. Assume that 
\begin{multline*}
F_{\ep,k, \sigma}:=\hal \int_{\Omega_{k,\sigma} }\Big( |\p_t u_\ep-iu_\ep\N \phi|^2 +|\p_1 u_\ep -iu_\ep \N \v_1|^2+ \frac{1}{2\ep^2} (1-|u_\ep|^2)^2\\
+\N^2(|\nab\phi|^2 + |\p_t \v|^2)\Big) 
\le M_\ep\end{multline*} with $M_\ep$ as in \eqref{condM}.
 Then if $\ep$ is small enough,  there exists a finite collection of disjoint closed  balls $\{B_i\}$ for the metric $g_k$ of centers $a_i$ and radii $r_i$ such that 
\begin{enumerate}
\item $ \sum_i r_i \le \Lambda M_\ep^{-1}$
\item $ \cup_i B_i$ covers $\{| |u_\ep(x)|-1|\ge \hal \} \cap \Omega_{k,\sigma}^\ep$.
\item Writing $d_i= \deg(u_\ep, \p B_i)$ if $B_i \subset \Omega^\ep_{k,\sigma}$ and $d_i=0$ otherwise,  we have  for each $i$,
\begin{multline}\label{lbmetric}
\hal \int_{B_i}\frac{1}{|X_k|}\Big( \Lambda |X_k|^2 |\p_1  u_\ep -iu_\ep \N \v_1 |^2   +   
\frac{1}{\Lambda} |\p_t u_\ep-iu_\ep \N\phi|^2   \\
 + 2 M_\ep^{-2} \N^2(\Lambda |X_k|^2 |\nab \phi|^2 + \frac{1}{\Lambda} |\p_t \v|^2 )  \Big) \\ \ge \pi   |d_i|\ \( \lep - C\log M_\ep\) .\end{multline}
\item Letting $\mu_{\ep,k,\sigma}= 2\pi \sum_i  d_i \delta_{a_i}$, we have  for any $0 < \gamma\le 1$,  and any $\xi \in C^{0, \gamma}_c(\Omega_{k,\sigma })$, 
$$\left|\int \xi  \wedge J_{\ep,k,\sigma}- \xi  \mu_{\ep,k,\sigma}\right| \le C \|\xi\|_{C^{0,\gamma}}(\Lambda M_\ep^{-2})^\gamma F_{\ep,k, \sigma}  .$$
\end{enumerate}
\end{lem}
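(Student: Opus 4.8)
\emph{Approach.} The plan is to recognize $J_{\ep,k,\sigma}$, the restriction to the slice $\Omega_{k,\sigma}$ of the two-form $J_\ep(e_1,e_t)$, as the gauged vorticity of a two-dimensional Ginzburg--Landau problem posed in the $(w,t)$ plane, with order parameter $u_\ep$, gauge one-form $A_{k,\sigma}=\N(\v_1,\phi)$ (the $e_1$- and $e_t$-components of $A_\ep$ from \eqref{defAA}), and magnetic field $\N(\p_t\v_1-\p_1\phi)$, the corresponding component of $\frac1\N\curl A_\ep$ in \eqref{A1bis}. Indeed, by the identity \eqref{Jeps} the form $J_{\ep,k,\sigma}$ is the exterior derivative in the slice of the gauged current $\la \p_1 u_\ep-iu_\ep\N\v_1,iu_\ep\ra+\N\v_1$ and $\la \p_t u_\ep-iu_\ep\N\phi,iu_\ep\ra+\N\phi$ (in the $dw$ and $dt$ parts), so that the hypothesis $F_{\ep,k,\sigma}\le M_\ep$ is precisely a bound on the gauged Ginzburg--Landau energy on the slice, the magnetic contribution being the term $\N^2(|\nab\phi|^2+|\p_t\v|^2)$. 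Everything then reduces to performing a gauged ball construction and a Jacobian estimate in the slice, the only two nonstandard features being that the domain is bounded but not of fixed size, and that the relevant energy is measured through the anisotropic constant metric $g_k$.

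\emph{The metric ball construction.} I would run the ball construction of \cite{sa,j,ss1} in its gauged form \cite[Chap.~4]{livre}, but with the geodesic balls (ellipses) of the flat metric $g_k$ in place of Euclidean discs, exactly as in the product estimate of \cite{ss6}. Since $F_{\ep,k,\sigma}<\infty$ and $\Omega_{k,\sigma}$ is bounded, the set $\{\,||u_\ep|-1|\ge \hal\}\cap\Omega^\ep_{k,\sigma}$ is covered by finitely many small balls on which the construction is initialized, yielding item (2); growing them by the usual merging procedure and stopping once the total $g_k$-radius reaches $\Lambda M_\ep^{-1}$ gives item (1), the bound $F_{\ep,k,\sigma}\le M_\ep$ guaranteeing that this radius is attained while the balls remain well inside $\Omega_{k,\sigma}$. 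The logarithmic lower bound applied on each final ball produces, after one unwinds the conformal factor of $g_k$, exactly the directionally weighted energy on the left of \eqref{lbmetric}; because the balls grow only to radius $\sim M_\ep^{-1}$ rather than $O(1)$, the available logarithm is $\log(M_\ep^{-1}/\ep)=\lep-C\log M_\ep$, which is the origin of the $-C\log M_\ep$ in item (3). The curvature of the gauge enters through a term of size $r_i^2\,\N^2|\p_t\v_1-\p_1\phi|^2$, and since $r_i\le\Lambda M_\ep^{-1}$ this is exactly the $M_\ep^{-2}\N^2(\cdots)$ error displayed in \eqref{lbmetric}.

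\emph{The Jacobian estimate.} Item (4) is the localized and metric version of the estimate already established in Lemma \ref{lemjac}: replacing there $u_\ep$ and the gauge $\N\v$ by $u_\ep$ and $A_{k,\sigma}$ on the slice, and $\xi$ by a test function supported in $\Omega_{k,\sigma}$, one compares $J_{\ep,k,\sigma}$ with $2\pi\sum_i d_i\delta_{a_i}$ up to an error controlled by $(\sum_i r_i+\ep)\|\xi\|_{C^{0,1}}$ times the total energy, hence by $C(\Lambda M_\ep^{-2})\|\xi\|_{C^{0,1}}F_{\ep,k,\sigma}$ once the conversion between $g_k$-radii and coordinate radii is taken into account. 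The statement for $0<\gamma\le1$ then follows by the interpolation argument of \cite{js}, using that both $J_{\ep,k,\sigma}$ and $\mu_{\ep,k,\sigma}$ have total mass controlled by $F_{\ep,k,\sigma}$, and the various powers of $M_\ep$ and $\ep$ are reconciled through the defining properties \eqref{condM}.

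\emph{Main obstacle.} The delicate point is the anisotropic step of the second paragraph: one must check that carrying out the growth in the metric $g_k$ lower-bounds precisely the weighted combination $\Lambda|X_k|^2|\p_1 u_\ep-iu_\ep\N\v_1|^2+\frac1\Lambda|\p_t u_\ep-iu_\ep\N\phi|^2$, and not the isotropic energy, and that the loss in the logarithm is no worse than $C\log M_\ep$ uniformly in $k,\sigma$. A secondary technical issue is the behaviour as $|X_k|\to0$, where $g_k$ degenerates; this is why the degenerate case $X_k=0$ (and, in effect, that of very small $X_k$) must be treated separately, as the statement already anticipates.
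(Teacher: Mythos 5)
Your proposal is correct and follows essentially the same route as the paper: the paper likewise treats the slice as a gauged two-dimensional Ginzburg--Landau problem and invokes the ball construction of \cite{ss6,livre} together with Lemma \ref{lemjac}, implementing the ``geodesic balls for $g_k$'' step concretely by the linear change of variables $x=\sqrt{\Lambda}|X_k|w$, $s=t/\sqrt{\Lambda}$ applied to $u_\ep/|u_\ep|$ outside a level set of small perimeter obtained from the co-area formula (initial radius $C\ep M_\ep^3$, final radius $M_\ep^{-1}$, whence the $-C\log M_\ep$ loss). The two delicate points you single out (that the anisotropic growth yields exactly the weighted combination in \eqref{lbmetric}, and the degenerate case $X_k=0$) are indeed the ones the paper's argument is designed around.
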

\begin{proof}
The first 3 items are a rewriting of \cite[Proposition IV. 2]{ss6}, itself based on the ball construction,   that needs to be adapted to the case of the nonstandard metric.
As in \cite[Proposition IV. 2]{ss6} we start by noting, via the co-area formula, that  there exists $m_\ep$ with $\hal M_\ep^{-1} \le m_\ep\le M_\ep^{-1}$ such that  setting  $\omega:=\{|u_\ep|\le 1- m_\ep\}$ has perimeter for the $g_k$ metric bounded by $C \ep M_\ep^{3}$. We may then  apply \cite[Proposition 4.3]{livre}  outside that set to 
 $v_\ep(t,w)=\frac{u_\ep}{|u_\ep|}(\sqrt{\Lambda } |X_k| w e_1+  \frac{1}{\sqrt{\Lambda}} t e_t )$ and $\tilde A_\ep(t,w)= A_\ep( \sqrt{\Lambda} |X_k| w e_1 + \frac{1}{\sqrt{\Lambda}} t e_t)$ restricted to the slice, with initial radius $r_0= C \ep M_\ep^3$ and final radius $r_1=M_\ep^{-1}$. 
This yields  a collection of disjoint closed balls $\bar B_i$ with sum of radii bounded by $ M_\ep^{-1}$ and such that 
$$\hal\int_{\cup_i \bar B_i\backslash \omega} |\nab v_\ep|^2 + M_\ep^{-2} |\curl \tilde A_\ep|^2
\ge \pi \sum_i |d_i| (\lep- C \log M_\ep).$$
Making the change of variables $x= \sqrt{\Lambda} |X_k| w$ and $ s= \frac{t}{\sqrt{\Lambda}}$, we obtain balls $B_i$, the images of the $\bar B_i$'s by the change of variable, which are geodesic balls for the metric   $g_k$ and whose sum of the radii is bounded by $\Lambda M_\ep^{-1}$ (since $\Lambda \ge 1$);  and inserting \eqref{A1bis} and using \eqref{condM}
we obtain \eqref{lbmetric}.
 We  note that the fact that the domain size also depends on $\ep$ does not create any problem in applying that proof. 
 
 Item 4 is    a consequence of Lemma \ref{lemjac}  adapted to the present setting with differential forms, replacing $\nab u_\ep -iu_\ep A_\ep$ by $du_\ep -iu_\ep A_\ep$ and using again \eqref{A1bis}.
 
 \end{proof}
 
We now proceed as in the proof of \cite{ss6}.  We set $\nu_{\ep,k,\sigma}$ to be the $\mu_{\ep, k ,\sigma}$ of  Lemma \ref{52} (item 4) if the assumption $F_{\ep,k,\sigma}\le M_\ep$ is verified, and $0$ if not.
We note that \begin{equation}\label{proxi}
\|J_{\ep,k,\sigma}- \nu_{\ep,k,\sigma}\|_{(C^{0,1}_c(\Omega_{k,\sigma}))'}\le C \Lambda M_\ep^{-1/2} F_{\ep,k,\sigma} \end{equation} is true in all cases. Indeed, either $F_{\ep, k,\sigma} \le M_\ep$ in which case  the result is true by item 4 of Lemma \ref{52} since $M_\ep^{-1/2} \ge M_\ep^{-2}$, or $\nu_{\ep, k,\sigma}=0$  in which case, for any $\xi \in C^{0,1}_c(\Omega_{k,\sigma})$, starting from \eqref{Jeps} and writing $|\la \nab u_\ep-iA_\ep u_\ep, iu_\ep\ra|\le |\nab u_\ep-iA_\ep u_\ep|+||u_\ep|-1||\nab u_\ep -iA_\ep u_\ep|$, we obtain   with the Cauchy-Schwarz inequality,  using the boundedness of $\Omega_{k,\sigma}$, 
\begin{align*}
\left|\int \xi \wedge J_{\ep,k, \sigma}\right|&  \le C \|\nab \xi\|_{L^\infty}  \int_{\Omega_{k,\sigma}} |\nab u_\ep-iu_\ep A_\ep| + |A_\ep|+|1-|u_\ep|^2| \\& \le C \|\nab \xi\|_{L^\infty}  \sqrt{F_{\ep,k, \sigma}}+ \ep F_{\ep, k,\sigma} \le M_\ep^{-1/2}F_{\ep, k,\sigma} ,\end{align*} for $\ep $ small enough.
But since $F_{\ep, k,\sigma} \ge M_\ep$,  we have $\sqrt{F_{\ep,k, \sigma}} +\ep F_{\ep, k,\sigma}  \le 2 M_\ep ^{-1/2} F_{\ep,k, \sigma} $ and thus we find that  \eqref{proxi} holds as well. 
By \eqref{lbmetric}, we also have that 
\begin{equation}\label{bornnu}
\int |\nu_{\ep, k,\sigma}| \le \frac{C}{\lep} \max(\frac{1}{\Lambda |X_k|}, \Lambda |X_k|)F_{\ep, k,\sigma} .\end{equation}
Next, we choose $\eta$ a function depending only on time, vanishing at $0$ and $\tau$, such that $\eta=1$ in $[M_\ep^{-1/4}, \tau- M_\ep^{-1/4}] $ if that interval is not empty, and affine otherwise. By construction \begin{equation}\label{cheat}
\|\eta\|_{C^{0,1}} \le CM_\ep^{1/4}, \quad \|\eta \chi_k\|_{C^{0,1}}\le C M_\ep^{1/4}.\end{equation}

We may now write that 
\begin{multline}\label{binfl}
\int_{\Omega_{k,\sigma}} \frac{\eta \chi_k }{|X_k|}\Big( \Lambda |X_k|^2 |\p_1  u_\ep - iu_\ep \N \v_1|^2  +   
\frac{1}{\Lambda} |\p_t u_\ep-  iu_\ep\N\phi|^2   \\+ 2 M_\ep^{-2} \N^2(\Lambda |X_k|^2 |\nab \phi|^2 + \frac{1}{\Lambda} |\p_t \v|^2 )   \Big) \\ 
\ge   (\lep -C \log M_\ep) \int_{\Omega_{k,\sigma}} (\eta\chi_k -C \Lambda^2 M_\ep^{1/4-1}) \nu_{\ep,k,\sigma} .\end{multline}
Indeed, if we are in a slice where $\nu_{\ep,k,\sigma}=0$, this is trivially true. If not, we apply \eqref{lbmetric} and obtain
\begin{multline*}
\frac12 \int_{\Omega_{k,\sigma}^\ep \cap (\cup_iB_i)} \frac{\eta \chi_k }{|X_k|}\Big( \Lambda |X_k|^2 |\p_1  u_\ep - iu_\ep \N \v_1|^2  +   
\frac{1}{\Lambda} |\p_t u_\ep-  iu_\ep\N\phi|^2  \\+ 2 M_\ep^{-2} \N^2(\Lambda |X_k|^2 |\nab \phi|^2 + \frac{1}{\Lambda} |\p_t \v|^2 )  \Big) \\
 \geq 2\pi  \sum_i |d_i|\min_{B_i} (\eta \chi_k) (\lep - C \log M_\ep)).\end{multline*}
Inserting then $\min_{B_i} \eta\chi_k\ge (\eta\chi_k)    (a_i)  -C \Lambda r_i \|\eta \chi_k\|_{C^{0,1}}$ and  \eqref{cheat}, and using item 1 of Lemma  \ref{52}, yields \eqref{binfl}.

Next, we  integrate \eqref{binfl} with respect to $\sigma$, and combine with \eqref{proxi} and \eqref{bornnu} to obtain
\begin{align}
\nonumber   & \int_{D_k } \frac{\eta\chi_k}{|X_k|}\Big( \Lambda |X_k|^2 |\p_1  u_\ep - iu_\ep \N \v_1|^2   +   
\frac{1}{\Lambda} |\p_t u_\ep-iu_\ep\N\phi |^2 
\\ \nonumber &  + 2 M_\ep^{-2} \N^2(\Lambda |X_k|^2 |\nab \phi|^2 + \frac{1}{\Lambda} |\p_t \v|^2 )   \Big) 
\\ 
\nonumber  & \ge (\lep- C \log M_\ep)  \int_{\R^2 \times [0,\tau]} \eta\chi_k  J_{\ep,k,\sigma}   \\ \label{final} &-
C \max(\frac{1}{\Lambda |X_k|}, \Lambda |X_k|)\Lambda^2 M_\ep^{1/4-1} F_{\ep, k} 
+ C M_\ep^{1/4}\lep \Lambda M_\ep^{-1/2} F_{\ep,k},\end{align} 
with $=F_{\ep, k}:= \int F_{\ep, k,\sigma} d\sigma.$
  Moreover, by \eqref{Jep},  (since we assumed $X_k$ is along the direction $e_1$) we have
 \begin{equation*}J_{\ep,k,\sigma}= J_\ep (e_1, e_t)=  \frac{1}{|X_k|}\(\tilde V_\ep \cdot X_k + \N(1-|u_\ep|^2)(\p_t \v -\nab \phi) \cdot X_k   \) 
 \end{equation*} 
 so we may bound
\begin{multline*}
\int_{D_k} \eta\chi_k \N(1-|u_\ep|^2) (  \p_t \v-\nab \phi)  \cdot X_k \\ \le C \|X\|_{L^\infty} \N\|1-|u_\ep|^2\|_{L^2(D_k)} \|\p_t \v - \nab \phi \|_{L^2(D_k)}\\
 \le C \|X\|_{L^\infty}   \N \ep F_{\ep,k}.\end{multline*}
 Inserting into \eqref{final} and multiplying by $|X_k|$, we may write 
 \begin{align*}
& \int_{D_k} \eta\chi_k \Big( \Lambda |X_k|^2 |\p_x  u_\ep - iu_\ep \N \v|^2  +   
\frac{1}{\Lambda} \left|\p_t u_\ep - iu_\ep\N\phi \right|^2 
\\  \nonumber & + 2 M_\ep^{-2} \N^2(\Lambda |X_k|^2 |\nab \phi|^2 + \frac{1}{\Lambda} |\p_t \v|^2 )   \Big) 
\\
&  \ge( \lep- C \log M_\ep)  \int_{D_k} \eta\chi_k \tilde V_\ep \cdot X_k 
\\
& - \max(\frac{1}{\Lambda }, \Lambda |X_k|^2)  \Lambda^2 M_\ep^{-3/4} +(\Lambda M_\ep^{-1/4} \lep +\ep \N) \|X\|_{L^\infty}F_{\ep, k},\end{align*}
and we note that this holds as well if $X_k=0$. 
We may next replace $X_k$ by $X$ in the left-hand side and the $\int \tilde V_\ep \cdot X_k$ term, and using that $|X-X_k|\le  C M_\ep^{-1/4} \|X\|_{C^{0,1}}$ in $D_k$, the error thus created is bounded above by
$$ M_\ep^{-1/4}  \Lambda \lep \|X\|_{L^\infty} (1+\|X\|_{C^{0,1}} )  F_{\ep, k} 
$$
where we have used that by definition of $\tilde V_\ep$, we have $\int_{D_k} |\tilde V_\ep|\le F_{\ep, k}$.
We may thus absorb this error into the others and write (since $\Lambda \ge 1$)
\begin{align*}
& \int_{D_k} \eta\chi_k \Big( \Lambda |X|^2 |\p_x  u_\ep - iu_\ep \N \v|^2  +   
\frac{1}{\Lambda} \left|\p_t u_\ep - iu_\ep\N\phi \right|^2 \Big)\\  \nonumber & + 2 M_\ep^{-2} \N^2(\Lambda |X|^2 |\nab \phi|^2 + \frac{1}{\Lambda} |\p_t \v|^2 )   \Big) \\
&  \ge (\lep- C \log M_\ep)  \int_{D_k} \eta\chi_k \tilde V_\ep \cdot X
\\ & - C  \|X\|_{L^\infty}  \(\Lambda^3   (1+\|X\|_{C^{0,1}}) M_\ep^{-1/8}  +\ep \N\) F_{\ep, k}  .\end{align*}
Summing over $k$,  using that $\sum_k \chi_k=1$ in $[0,\tau ]\times \R^2$, the finite overlap of the covering, the fact that $\p_t \v $ and $\nab \phi \in L^2([0, \tau] , L^2 (\R^2))$ by Lemma \ref{lemv} and \eqref{contrmod}, we are led to 
\begin{multline*}
 \int_0^\tau \i\eta \( \Lambda |X|^2 |\p_x  u_\ep - iu_\ep \N \v|^2  +   
\frac{1}{\Lambda} \left|\p_t u_\ep - iu_\ep\phi \right|^2 \) 
 \\ \ge( \lep- C \log M_\ep)  \int_0^\tau \i \eta \tilde V_\ep \cdot X
\\  - C \|X\|_{L^\infty}  \(\Lambda^3   (1+\|X\|_{C^{0,1}}) M_\ep^{-1/8}  +\ep \N\) \(F_{\ep} + \N^2 \)
  .\end{multline*}
Moreover, by choice of $\eta$ and definition of $\tilde V_\ep$,   we have 
\begin{align*}
\lefteqn{
\left|\int_0^\tau \i  (1-\eta) X\cdot \tV \right|
} \qquad & \\
& \le 2 \|X\|_{L^\infty}  \int_{ [0,M_\ep^{-1/4}]\cup [  \tau-M_\ep^{-1/4},\tau]}\i|\p_t u_\ep-iu_\ep \N \phi | |\nab u_\ep -iu_\ep \N \v| \\ 
& \le 
C \|X\|_{L^\infty} \sqrt{F_\ep\Big(\sup_{t\in [0,\tau]} \E(t)+o(1)\Big) }  M_\ep^{-1/8}   .
\end{align*}
where we have used the Cauchy-Schwarz inequality and  \eqref{contmod}. Combining the last two relations, we deduce the desired result.

\section{
Existence and uniqueness for \eqref{limgm}}\label{app2}
We recall that the definitions of H\"older spaces that we use are at the beginning  of Section \ref{sec2}.
For the sake of generality we study \eqref{limgm} with arbitrary $\a>0$ and $\b\ge 0$ such that $\a^2+\b^2=1$, and $\lambda>0$.
We denote by $\omega=\curl \v$ and $\d= \div \v$. We note that  if $\v$ solves \eqref{limg} then $(\om, \d)$ solve
\begin{equation}\label{syst}
\left\{
\begin{array}{ll}
\p_t \om=\displaystyle 2 \div (\beta \v \om+ \a \v^\perp \om)\\ [2mm]
\p_t \d = \displaystyle \frac{\l}{\a} \Delta \d+ 2\div (\beta\v^\perp \om-\a\v\om).
\end{array}\right. 
\end{equation}

\begin{theo} \label{th-exi}
Assume $\lambda >0$, $\a>0$ and $\a^2+\b^2=1$.
Assume  $\v(0)$ is such that  $\frac{1}{2\pi}\om(0)$ is a    probability measure which also belongs to $C^{\g}(\R^2)$, and   $\d(0) \in C^\g \cap L^p(\R^2)$, for some $0<\g\le 1$ and some $1\le p <2$.   Then there exists a  unique local in time solution to \eqref{limg} on  some interval $[0,T]$, $T>0$, which is such that $\v \in L^\infty([0,T], C^{1+\g'} )$ for any $\g'<\g$. Moreover, we have $\v(t)-\v(0)\in L^\infty([0,T],L^2(\R^2))$, $\p_t \v \in L^2([0,T], L^2(\R^2))$, $\d\in L^\infty([0,T], L^p(\R^2))$ and $\frac{1}{2\pi}\om(t) $ is a probability measure for every $t \in[0,T]$.
\end{theo}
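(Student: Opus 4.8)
The plan is to pass to the vorticity--divergence formulation \eqref{syst} and to solve it by a fixed-point scheme, reconstructing $\v$ from the pair $(\om,\d)$ at each stage by the Biot--Savart type law
\begin{equation*}
\v = \np \Delta^{-1}\om + \nab \Delta^{-1}\d,
\end{equation*}
which inverts $\curl \v = \om$, $\div \v = \d$, the harmonic part being fixed by imposing the decay of $\v(0)$. By the classical Schauder estimates for the singular integral operators $\nab\nab\Delta^{-1}$, bounded on $C^\g(\R^2)$ for $0<\g<1$ but failing to preserve $C^1$, one obtains $\nab\v\in C^{\g'}$, i.e. $\v\in C^{1+\g'}$, for every $\g'<\g$; this endpoint loss is precisely why the statement asks only for $\g'<\g$. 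Since $\om(0)\in L^1$ (it carries mass $2\pi$) and $\d(0)\in L^p$, the same reconstruction keeps $\v$ bounded and in $L^4$, and keeps $\v-\v(0)$ in $L^2$, exactly as in the Euler case treated in Lemma \ref{lemv}.

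Next I would define the iteration $(\om,\d)\mapsto(\tilde\om,\tilde\d)$ as follows. Given $(\om,\d)$ in a ball of the space $C([0,T],C^\g)\times C([0,T],C^\g\cap L^p)$, form $\v$ by Biot--Savart and set $w:=2(\b\v+\a\v^\perp)$, which is Lipschitz in space uniformly on $[0,T]$. Let $\tilde\om$ solve the continuity equation $\p_t\tilde\om=\div(w\,\tilde\om)$ with datum $\om(0)$: since $w$ is Lipschitz this is solved by its flow, it conserves the mass $\i\tilde\om=2\pi$ and the sign, so that $\frac{1}{2\pi}\tilde\om(t)$ remains a probability measure, and transport by a $C^{1+\g'}$ field propagates the $C^\g$ bound up to a factor $e^{CT}$. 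Let $\tilde\d$ solve $\p_t\tilde\d=\frac{\l}{\a}\Delta\tilde\d+2\div(\b\v^\perp\om-\a\v\om)$ by Duhamel's formula. The source is the distributional gradient of the product $\v\om\in C^\g$, hence lies in $C^{-1+\g}\subset C^{-2+\g}$, so the quoted estimate \eqref{chemin} gives $\tilde\d\in L^\infty([0,T],C^\g)$; and since $\v\om\in L^1\cap L^\infty\subset L^p$, the bound $\|e^{s\frac{\l}{\a}\Delta}\div F\|_{L^p}\le C s^{-1/2}\|F\|_{L^p}$, integrable near $s=0$, gives $\tilde\d\in L^\infty([0,T],L^p)$.

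These estimates show that for $R$ large (in terms of the data) and $T$ small the map sends the closed ball of radius $R$ into itself. To extract a fixed point I would then prove contraction in the weaker norm $C([0,T],L^2)$ for the difference of two iterates, and combine it with the uniform $C^\g$ bound, obtaining a solution $(\om,\d)$ on some $[0,T]$ with $\v\in L^\infty([0,T],C^{1+\g'})$. The remaining assertions follow from the equation itself: with $\nab\d$ and $\v\om$ controlled in $L^2$, one reads off $\p_t\v=\frac{\l}{\a}\nab\d+2\b\v^\perp\om-2\a\v\om\in L^2([0,T],L^2)$, whence $\v(t)-\v(0)\in L^\infty([0,T],L^2)$, while $\frac{1}{2\pi}\om(t)$ is a probability measure for all $t$ as noted above. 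Uniqueness is obtained from the same $L^2$ difference estimate and Gronwall's lemma, the velocity difference being controlled by the $(\om,\d)$-difference through Biot--Savart.

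The main obstacle, and what makes the argument more delicate than for the Euler equation, is the asymmetry of the coupled system: the $\om$-equation is pure transport and offers no smoothing, so no loss of regularity on $\om$ can be tolerated through the iteration, whereas the $\d$-equation is parabolic and is handled by Duhamel's formula. The estimates must therefore be closed so that transport by the Lipschitz field $w$ preserves the $C^\g$ norm of $\om$ \emph{exactly} (not merely $C^{\g-\ep}$), while the endpoint failure of the singular integrals pushes the final regularity down to $C^{1+\g'}$ with $\g'<\g$. Reconciling these — keeping $\om\in C^\g$ under transport by a field that is itself only $C^{1+\g'}$, and simultaneously controlling $\d$ in both $C^\g$ and $L^p$ in order to feed the source $\v\om$ back into the fixed point — is where the real care is needed.
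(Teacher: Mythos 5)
Your overall architecture is the same as the paper's: pass to the $(\om,\d)$ system \eqref{syst}, reconstruct $\v=\nab\Delta^{-1}\d-\np\Delta^{-1}\om$, iterate a linear transport equation for $\om$ and a forced heat equation for $\d$, close uniform bounds in $C^\g\times(C^\g\cap L^p)$ on a short time interval, and then get the fixed point by contraction in a weaker norm. Two steps, however, do not work as you have stated them.

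First, the contraction norm. You propose to contract in $C([0,T],L^2)$ for the difference of iterates $(\om,\d)$. Writing the difference of two transport equations gives $\p_t(\om_{n+1}-\om_n)=\div\bigl(w_n(\om_{n+1}-\om_n)\bigr)+\div\bigl((w_n-w_{n-1})\om_n\bigr)$, and the cross term is the divergence of a function that is merely bounded (in $C^\g\cap L^\infty$): it lives one derivative below $L^2$, and an $L^2$ energy estimate cannot absorb it without controlling $\nab\om_n$ or $\nab(\om_{n+1}-\om_n)$, neither of which is available at $C^\g$ regularity. The difference estimate must therefore be performed in a norm one derivative below the working regularity; the paper uses $C^{\sigma}$ with $\sigma=-1+\g$ (via the transport estimate \eqref{esttransport}, valid for $-1<\sigma\le\g$, and the smoothing estimate \eqref{estheat}), and $H^{-1}$ would serve equally well. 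Your $L^2$ choice, as written, does not close. (Relatedly, $\|\v_n-\v_{n-1}\|_{L^\infty}$ is not controlled by $\|\om_n-\om_{n-1}\|_{L^2}+\|\d_n-\d_{n-1}\|_{L^2}$ through Biot--Savart alone; one needs the interpolation with the uniform $C^\g$ bounds, which is exactly how the paper upgrades the weak-norm Cauchy property to convergence in $C^{1+\g'}$, $\g'<\g$.)

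Second, the source of the loss from $\g$ to $\g'$. For $0<\g<1$ the operators $\nab\nab\Delta^{-1}$ are bounded on $C^\g$ with \emph{no} loss, and the paper's Lemma \ref{lempt} gives $\|\v_{n+1}\|_{C^{1,\g}}\le C_1(\|\om_{n+1}\|_{C^\g}+\|\om_{n+1}\|_{L^1}+\|\d_{n+1}\|_{C^\g}+\|\d_{n+1}\|_{L^p})$ with the full exponent $\g$. This is not a cosmetic point: if the reconstruction really returned only $\v\in C^{1,\g'}$ with $\g'<\g$, then $\div\v\in C^{\g'}$ only, the zeroth-order term $\om\,\div\v$ in the transport equation would degrade $\om$ to $C^{\g'}$ at the next step, and the exponent would decay along the iteration -- the scheme would not close, which is precisely the tension you flag in your last paragraph without resolving it. The resolution is that the iteration is closed at the full exponent $\g$; the $\g'<\g$ in the statement arises only at the very end, when the Cauchy property in the weak norm is interpolated against the uniform $C^{1,\g}$ bound to produce convergence of $\{\v_n\}$ in $C^{1,\g'}$. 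With these two corrections (weak norm $C^{-1+\g}$ or $H^{-1}$ for the contraction, and the lossless elliptic estimate inside the loop), the rest of your argument -- conservation of mass and sign for $\om$, the $L^p$ bound for $\d$ via the $s^{-1/2}$ heat kernel estimate, and reading off $\p_t\v\in L^2([0,T],L^2)$ and hence $\v(t)-\v(0)\in L^\infty([0,T],L^2)$ from the equation -- matches the paper's proof.
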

We start with some preliminary results.
\begin{lem}\label{lem1}
Let $v$ be a vector field in $L^\infty([0,T],C^{1,\gamma}(\R^2))$,  $\om_0 \in C^{\g}$, $0<\g\le1$, $f \in L^\infty(\R, C^{\g})$  then the equation 
\begin{equation}\label{transport}
\left\{   \begin{array}{l}
\p_t \omega= \div (v \omega) +f \\
\omega(0)=\omega_0\end{array}\right.\end{equation} has a unique solution, and it holds that for some $C>0, C_0>1$,
\begin{equation}\label{estimeel1}
\|\om(t) \|_{L^1(\R^2) }\le \|\om_0\|_{L^1(\R^2)}+       \exp\( C \int_0^t\|\nab v(s)\|_{L^\infty} \, ds\) \int_0^t \|f(s)\|_{L^\infty}\, ds
\end{equation}
 and  for any $-1<\sigma\le \g$,
\begin{multline}\label{esttransport}
\|\om(t) \|_{C^{\sigma} }\le C_0 \(\| \om_0 \|_{ C^{\sigma}} + \int_0^t \|f(s)\|_{C^\sigma}\, ds    + \|\om_0\|_{L^\infty} \int_0^t \|\div v(s)\|_{C^\sigma}\, ds \)\\ \times  \exp\( C \int_0^t\|\nab v(s)\|_{L^\infty} \, ds\) .\end{multline} \end{lem}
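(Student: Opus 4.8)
The plan is to read \eqref{transport} as a linear transport equation with source, writing $\div(v\om)=v\cdot\nabla\om+(\div v)\om$ so that
$\p_t\om-v\cdot\nabla\om=(\div v)\,\om+f=:g$, and to establish the two estimates by two complementary tools: the method of characteristics for existence, uniqueness and the $L^1$ bound \eqref{estimeel1}, and a Littlewood--Paley (paradifferential) energy estimate for the H\"older bound \eqref{esttransport}. The first step is to set up the flow: since $v\in L^\infty([0,T],C^{1,\gamma}(\R^2))$ is Lipschitz in space uniformly in time, the ODE $\frac{d}{dt}X(t,x)=-v(t,X(t,x))$, $X(0,x)=x$, has by Cauchy--Lipschitz a unique flow $X(t,\cdot)$ which is a $C^1$ diffeomorphism of $\R^2$ with Jacobian $J(t,x)=\exp\big(-\int_0^t(\div v)(s,X(s,x))\,ds\big)$ by Liouville's formula. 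Along characteristics the equation reduces to the scalar ODE $\frac{d}{dt}[\om(t,X(t,x))]=(\div v)(t,X)\,\om(t,X)+f(t,X)$, whose Duhamel solution furnishes an explicit representation of $\om$.

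This representation simultaneously produces the solution and yields the $L^1$ bound. Changing variables $y=X(t,x)$ against the weight $J$, the integrating factor in the Duhamel formula exactly cancels the Jacobian, so the homogeneous part reproduces $\|\om_0\|_{L^1}$ identically, while the source part contributes a term controlled by $\int_0^t\|f(s)\|_{L^\infty}\,ds$ with the constant $\exp\big(C\int_0^t\|\nabla v\|_{L^\infty}\big)$ coming from a crude bound on the integrating factor; this is \eqref{estimeel1}. The same representation estimated in $L^\infty$ gives $\|\om(t)\|_{L^\infty}\le\|\om_0\|_{L^\infty}\exp\big(C\int_0^t\|\nabla v\|_{L^\infty}\big)+\cdots$, which I will reuse below. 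Uniqueness is immediate: the difference of two solutions solves the homogeneous transport equation and is rigidly carried by the flow.

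For \eqref{esttransport} I would use the Littlewood--Paley decomposition $\om=\sum_q\Delta_q\om$ (recall $C^\sigma=B^\sigma_{\infty,\infty}$). Applying $\Delta_q$ gives $\p_t\Delta_q\om-v\cdot\nabla\Delta_q\om=\Delta_q g+[v\cdot\nabla,\Delta_q]\om$, and the transport maximum principle on each block yields $\|\Delta_q\om(t)\|_{L^\infty}\le\|\Delta_q\om_0\|_{L^\infty}+\int_0^t\big(\|\Delta_q g\|_{L^\infty}+\|[v\cdot\nabla,\Delta_q]\om\|_{L^\infty}\big)\,ds$. The standard commutator estimate (valid for $-1<\sigma<1$, see e.g.~\cite{chemin}) gives $2^{q\sigma}\|[v\cdot\nabla,\Delta_q]\om\|_{L^\infty}\le C c_q\|\nabla v\|_{L^\infty}\|\om\|_{C^\sigma}$ with $(c_q)$ bounded; multiplying by $2^{q\sigma}$, taking the supremum in $q$ and invoking Gronwall absorbs this term into the factor $\exp\big(C\int_0^t\|\nabla v\|_{L^\infty}\big)$. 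It then remains to bound $g=(\div v)\,\om+f$ in $C^\sigma$: the $\|f\|_{C^\sigma}$ term is kept as stated, and for the product I would use Bony's decomposition to get $\|(\div v)\,\om\|_{C^\sigma}\le C\big(\|\om\|_{L^\infty}\|\div v\|_{C^\sigma}+\|\div v\|_{L^\infty}\|\om\|_{C^\sigma}\big)$. The second summand, being $\lesssim\|\nabla v\|_{L^\infty}\|\om\|_{C^\sigma}$, is again absorbed by Gronwall, while the first, with $\|\om\|_{L^\infty}$ controlled by the characteristic estimate above, produces precisely the term $\|\om_0\|_{L^\infty}\int_0^t\|\div v\|_{C^\sigma}$ of \eqref{esttransport}.

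The main obstacle will be the low-regularity endpoint of the H\"older estimate, i.e.~$\sigma$ close to $-1$. There both the commutator estimate and the product estimate $\|(\div v)\,\om\|_{C^\sigma}\lesssim\|\om\|_{L^\infty}\|\div v\|_{C^\sigma}+\cdots$ demand careful paradifferential analysis: the remainder $R(\om,\div v)$ is only controlled because $\om\in L^\infty=B^0_{\infty,\infty}$ pairs with $\div v\in C^\gamma$, $\gamma>0$, and it is exactly the hypothesis $\sigma\le\gamma$ that keeps the paraproducts and remainder in the right space. The remaining difficulty is purely bookkeeping---tracking the multiplicative constant $C_0$ and assembling the clean exponential factor from the block estimates and the Gronwall step; everything else is routine linear transport theory.
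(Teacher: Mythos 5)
Your argument is correct and is essentially the paper's own: the paper likewise rewrites the equation as $\p_t\om = v\cdot\nab\om+\om\div v+f$, obtains the $L^1$ and $L^\infty$ bounds by propagation along characteristics, gets \eqref{esttransport} for $\sigma=\gamma$ by Gronwall, and for general $-1<\sigma\le\gamma$ simply cites the transport estimates of Bahouri--Chemin--Danchin (Chap.~3) — which is exactly the Littlewood--Paley block estimate, commutator bound and Bony product decomposition you spell out. The only difference is that you have filled in the details hidden behind that citation, including the correct observation that the remainder term is saved by pairing $\om\in L^\infty$ with $\div v\in C^\gamma$, $\gamma>0$.
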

\begin{proof}One may rewrite the equation as 
$$\p_t \om= v\cdot \nab \om + \om \div v+f.$$
Then, by  propagation along characteristics, we obtain  first
$$\|\om(t)\|_{L^1} \le \|\om_0\|_{L^1} + \( \int_0^t \|f(s)\|_{L^\infty} \, ds \) \exp\int_0^t \|\div v(s)\|_{L^\infty} \, ds,$$ second, 
$$\|\om(t)\|_{L^\infty} \le \( \|\om_0\|_{L^\infty} + \int_0^t \|f(s)\|_{L^\infty} \, ds\) \exp\int_0^t \|\div v(s)\|_{L^\infty} \, ds,$$
and third  \eqref{esttransport}  for $\sigma=\gamma$ follows by a Gronwall argument.
For general $\sigma\le \g$, one can proceed as in  \cite[Chap. 3]{bcd}.

\end{proof}

The next lemma  about the regularizing effect of the heat equation can be found in \cite[Proposition 2.1]{chemin2} (applied with $p=\infty$, $\ro=\infty$ and noting that $B^{s}_\infty$ is the same as $C^s$).
\begin{lem}\label{lem2}
If $g\in L^\infty([0,T], C^{-1+\g}\cap H^{-1}(\R^2))$ and $u_0\in C^{\g}(\R^2)$,    then the equation 
\begin{equation}\label{heat}
\left\{   \begin{array}{l}
\p_t u= \nu \Delta u +g \\
u(0)=u_0\end{array}\right.\end{equation}
has a unique solution which is in $L^\infty([0,T], C^{\g}\cap L^2(\R^2))$  and $L^2([0,T], H^1(\R^2))$ and for any $\sigma \le \g$, 
\begin{equation}\label{estheat}
\|u\|_{L^\infty([0,T],C^{\sigma} )} \le C_0\(   \|u_0\|_{C^\sigma} +  \sqrt{\frac{T }{\nu} }  \|g\|_{L^\infty([0,T],C^{-1+\sigma}) }  \).\end{equation}
\end{lem}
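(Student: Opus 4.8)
The plan is to build the solution through Duhamel's formula and to extract every estimate from the smoothing properties of the heat semigroup $S(\tau):= e^{\nu \tau \Delta}$ in the H\"older--Besov scale $C^s = B^s_{\infty,\infty}$. The mild solution is
\begin{equation*}
u(t) = S(t) u_0 + \int_0^t S(t-s)\, g(s)\, ds,
\end{equation*}
and the two ingredients I would use are the frequency-localized bounds coming from Bernstein's inequalities and the block decay $\|e^{\nu\tau\Delta}\Delta_j f\|_{L^\infty}\le Ce^{-c\nu\tau 2^{2j}}\|\Delta_j f\|_{L^\infty}$: namely the boundedness $\|S(\tau) f\|_{C^\sigma}\le C\|f\|_{C^\sigma}$, and the gain-of-one-derivative smoothing $\|S(\tau) f\|_{C^\sigma}\le C(\nu\tau)^{-1/2}\|f\|_{C^{-1+\sigma}}$.

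First I would estimate the initial-data term by boundedness of $S(\tau)$ on $C^\sigma$, giving $\|S(t)u_0\|_{C^\sigma}\le C\|u_0\|_{C^\sigma}$. For the Duhamel integral I would apply the smoothing estimate to each time slice,
\begin{equation*}
\|S(t-s)\, g(s)\|_{C^\sigma}\le C(\nu(t-s))^{-1/2}\|g(s)\|_{C^{-1+\sigma}},
\end{equation*}
and integrate the singularity $\int_0^t (\nu(t-s))^{-1/2}\,ds = 2\sqrt{t/\nu}\le 2\sqrt{T/\nu}$, which produces precisely the factor $\sqrt{T/\nu}$ and yields \eqref{estheat}; the choice $\sigma=\gamma$ places $u$ in $L^\infty([0,T],C^\gamma)$. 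Since $\sigma\le\gamma$ gives the embedding $C^{-1+\gamma}\hookrightarrow C^{-1+\sigma}$, the right-hand side is controlled by $\|g\|_{L^\infty([0,T],C^{-1+\gamma})}$ throughout.

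Next, for the $L^2\cap H^1$ regularity and for uniqueness I would run the standard parabolic energy estimate: testing the equation against $u$ gives $\hal \frac{d}{dt}\|u\|_{L^2}^2 + \nu\|\nab u\|_{L^2}^2 = \la g,u\ra \le \|g\|_{H^{-1}}\|\nab u\|_{L^2}$, and absorbing $\|\nab u\|_{L^2}$ by Young's inequality yields $u\in L^\infty([0,T],L^2)\cap L^2([0,T],H^1)$ with a bound of order $\|u_0\|_{L^2}^2 + \nu^{-1}T\|g\|_{L^\infty([0,T],H^{-1})}^2$. This is exactly where the $H^{-1}$ hypothesis on $g$ enters, and uniqueness follows at once by applying the same estimate to the difference of two solutions, which solves the equation with zero data.

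The main obstacle is the rigorous justification of the two semigroup estimates in $C^\sigma$, in particular the low-frequency behaviour: because $C^\sigma$ consists of merely bounded (not decaying) functions, the gain-of-derivative bound has to be carried out block-by-block, summing $\sum_j 2^{j\sigma}\|e^{\nu\tau\Delta}\Delta_j g\|_{L^\infty}$ and using $2^j e^{-c\nu\tau 2^{2j}}\le C(\nu\tau)^{-1/2}$. This is precisely the content of \cite[Proposition 2.1]{chemin2} (taken with $p=\rho=\infty$ and the identification $B^s_{\infty,\infty}=C^s$), which I would cite directly rather than reconstruct the full paradifferential machinery.
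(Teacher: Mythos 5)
Your proposal is correct and follows essentially the same route as the paper, which simply cites \cite[Proposition 2.1]{chemin2} for the H\"older estimate and remarks in one line that the $L^2$--$H^1$ regularity comes from $g\in L^2([0,T],H^{-1}(\R^2))$ and heat regularization; you reconstruct the standard Duhamel/Littlewood--Paley argument behind that citation and supply the routine energy estimate for the $L^2\cap H^1$ statement and for uniqueness. The only point to watch is that the low-frequency block $\Delta_{-1}$ enjoys no $(\nu\tau)^{-1/2}$ smoothing, so its contribution to the time integral is of order $T$ rather than $\sqrt{T/\nu}$ --- harmless for the application since $\nu$ and $T$ are fixed, and you correctly identify this as the delicate step and defer it to the cited proposition.
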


We note that the fact that $u\in L^2([0,T], H^1(\R^2))$ comes from the fact that $g\in L^2([0,T], H^{-1}(\R^2))$ and the regularizing effect of the heat equation.

\begin{lem}\label{lemheqt2}Assume $u$ solves in $[0,T]$
\begin{equation}\label{heat}
\left\{   \begin{array}{l}
\p_t u=  \Delta u +\div f \\
u(0)=u_0\end{array}\right.\end{equation}
then, if $q\le p$ and $ \frac1p-\frac1q+\frac12\ge 0$, we have for any $t\in [0,T]$,
$$\|u(t)\|_{ L^p(\R^2)}\le \|u_0\|_{L^p}+ C_{p,q}t^{\frac1p-\frac1q+\frac12}\|f\|_{L^\infty([0,T],L^q(\R^2))}.$$
\end{lem}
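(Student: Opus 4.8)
The plan is to represent the solution by Duhamel's formula and then invoke the classical $L^q$--$L^p$ smoothing estimates for the heat semigroup in the plane. Writing $e^{t\Delta}$ for convolution with the two-dimensional heat kernel $G_t(x)=(4\pi t)^{-1}e^{-|x|^2/(4t)}$, the variation-of-constants formula reads
\[
u(t)= e^{t\Delta}u_0 + \int_0^t e^{(t-s)\Delta}\,\div f(s)\,ds .
\]
The homogeneous term is harmless: since $\|G_t\|_{L^1(\R^2)}=1$, Young's inequality gives that $e^{t\Delta}$ is a contraction on every $L^p(\R^2)$, whence $\|e^{t\Delta}u_0\|_{L^p}\le \|u_0\|_{L^p}$, which produces the first term in the claimed bound.

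For the inhomogeneous term I would move the divergence onto the kernel, writing $e^{(t-s)\Delta}\div f(s)=(\nab G_{t-s})\ast f(s)$, and then apply Young's convolution inequality with exponents satisfying $1+\tfrac1p=\tfrac1r+\tfrac1q$, that is $\tfrac1r=1+\tfrac1p-\tfrac1q$. The assumption $q\le p$ guarantees $\tfrac1r\le 1$ and $\tfrac1q\le 1$ guarantees $\tfrac1r\ge 0$, so $r\in[1,\infty]$ is admissible and $\|(\nab G_{t-s})\ast f(s)\|_{L^p}\le \|\nab G_{t-s}\|_{L^r}\|f(s)\|_{L^q}$. A scaling computation in dimension two gives $\|\nab G_\tau\|_{L^r}=\tau^{-3/2+1/r}\|\nab G_1\|_{L^r}$, and inserting $\tfrac1r=1+\tfrac1p-\tfrac1q$ turns the exponent into exactly $-\tfrac12+\tfrac1p-\tfrac1q$ (with $\|\nab G_1\|_{L^r}<\infty$ for every $r$). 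Bounding $\|f(s)\|_{L^q}$ by $\|f\|_{L^\infty([0,T],L^q)}$ and integrating in $s$, I am left with $\int_0^t (t-s)^{-1/2+1/p-1/q}\,ds=\dfrac{t^{\,1/2+1/p-1/q}}{1/2+1/p-1/q}$, which yields the stated power $t^{\frac1p-\frac1q+\frac12}$ together with the constant $C_{p,q}$.

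The only delicate point is the convergence of this last time integral: the exponent $-\tfrac12+\tfrac1p-\tfrac1q$ exceeds $-1$ exactly when $\tfrac1p-\tfrac1q+\tfrac12>0$, which is precisely where the hypothesis enters. The borderline case $\tfrac1p-\tfrac1q+\tfrac12=0$ (forcing $r=2$ and $\|\nab G_\tau\|_{L^2}\sim \tau^{-1}$) makes the integral logarithmically divergent, so the naive Young-plus-time-integration argument covers only the strict inequality; the endpoint, if it were needed, would require a sharper singular-integral or maximal-regularity estimate for $e^{t\Delta}\div$. Since the strict inequality holds in the application within the proof of Theorem \ref{th-exi}, this is the main (and essentially only) obstacle, and it does not affect the use made of the lemma.
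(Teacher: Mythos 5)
Your proof is correct and follows essentially the same route as the paper's: Duhamel's formula together with the scaling $\|\nab G_\tau\|_{L^r}\sim \tau^{-3/2+1/r}$, the only difference being that you invoke Young's convolution inequality directly where the paper re-derives the same bound through a double application of H\"older's inequality. Your observation about the endpoint $\frac1p-\frac1q+\frac12=0$ is accurate (the paper's own argument has the identical logarithmic divergence there), and as you note it is harmless since the lemma is applied with $q=1$ and $p<2$, where the inequality is strict.
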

\begin{proof}We follow \cite[Lemma 2.3]{du}.
Using Duhamel's formula, we may write
$$u(t,\cdot )= G(t,\cdot)*u_0 + w( t,\cdot)$$ 
with $$w( t,\cdot)= \int_0^t\int \Gamma_{t-s}(x-y) f(s,y)\, ds \, dy
 $$ where $G(t,x)= \frac{e^{\frac{-|x|^2}{4t} }}{4\pi t}$
 and  $\Gamma_t(x)=-\p_x G(t,x)=  \frac{x}{8\pi t^2}e^{\frac{-|x|^2}{4t}}$.
 By Young's inequality for convolutions, we have \begin{multline*}
 \|u(t)\|_{L^p(\R^2)} \le  \|u_0\|_{L^p(\R^2)}\|G(t,\cdot)\|_{L^1(\R^2)} + \|w(t)\|_{L^p(\R^2)}\\ \le \|u_0\|_{L^p(\R^2)} + \|w(t)\|_{L^p(\R^2)} .\end{multline*}
   We turn to studying  $w$.
   We may write with H\"older's inequality, 
   $$|w(t,x)|\le \int_0^t\( \int \Gamma_{t-s}( x-y)^{\frac{q'}{2}}\, dy\)^{\frac{1}{q'}}\( \int \Gamma_{t-s}(x-y)^{\frac{q}{2}}|f(s,y)|^q\, dy\)^{\frac1q}\, ds$$
   with $q'$ such that $1/q+1/q'=1$, and hence with H\"older's inequality again, if $q\le p$,
   \begin{multline*}
   \|w(t)\|_{L^p(\R^2)}\\
   \le \int_0^t\( \int \Gamma_{t-s}^{\frac{q'}{2}}\)^{\frac{1}{q'}} \( \int \(\int  \Gamma_{t-s}(x-y)^{\frac{q}{2}}|f(s,y)|^q\, dy\)^{\frac{p}{q}} dx\)^{\frac1p} ds\\
  \le \int_0^t ( \int \Gamma_{t-s}^{\frac{q'}{2}}\, dy)^{\frac{1}{q'}} 
   \(\int ( \int \Gamma_{t-s}( x-y)^{\frac{p}{2}} |f(s,y)|^q\, dy )
   ( \int |f(s,y)|^q \, dy )^{\frac{p}{q}-1} dx\)^{\frac1p} ds
   \\ \le \int_0^t \( \int \Gamma_{t-s}^{\frac{q'}{2}}\)^{\frac{1}{q'}} 
   \( \int |f(s,y)|^q \, dy \)^{\frac{1}{q}-\frac1p} \( \int \Gamma_{t-s}^{\frac{p}{2}}  \int  |f(s,y)|^q\, dy \)
  ^{\frac1p}\, ds
\\= \int_0^t \( \int \Gamma_{t-s}^{\frac{q'}{2}}\)^{\frac{1}{q'}} 
   \( \int |f(s,y)|^q \, dy \)^{\frac{1}{q}}\( \int \Gamma_{t-s}^{\frac{p}{2}}  \)^{\frac1p}
  \, ds
     \end{multline*}   where for the passage from the second to third line we used Young's inequality for convolutions.
   We may thus write
  \begin{equation*}
   \|w(t)\|_{L^p(\R^2)}\le \|f\|_{L^\infty([0,T], L^q(\R^2))} \int_0^t \|\Gamma_s\|_{L^{q'/2}(\R^2)}^{1/2} \|\Gamma_s\|_{L^{p/2}(\R^2)}^{\hal}\, ds.\end{equation*}
  But computing explicitly, we find $\|\Gamma_s\|_{L^r(\R^2)}=  C_r s^{-3/2 +1/r}$ and so we deduce that \begin{equation*}
   \|w(t)\|_{L^p(\R^2)}\le C_{p,q} \|f\|_{L^\infty([0,T], L^q(\R^2))} t^{\frac12+\frac1p-\frac1q},\end{equation*}
   and the result follows.
   \end{proof}
   
  Finally, we will need the following potential estimates:
  \begin{lem}\label{lempt}
  Let  $u\in C^{\gamma} \cap L^p(\R^2)$ with $0<\gamma\le 1$ and $1\le p<2$. Then $\nab \Delta^{-1} u$, where $\Delta^{-1}$ is meant as the convolution with $-\frac{1}{2\pi} \log$, is well defined and 
\begin{equation}\label{lemb3} \|\nab \Delta^{-1} u\|_{C^{1,\gamma}(\R^2)} \le C_1(\|u\|_{C^{\gamma}(\R^2)} + \|u\|_{L^p(\R^2)}).\end{equation}
  \end{lem}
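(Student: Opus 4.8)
The plan is to interpret $\nab \Delta^{-1}u$ as the convolution $(\nab N)*u$, where $N(x)=-\frac{1}{2\pi}\log|x|$ is the fundamental solution, so that $\nab N(x)=-\frac{1}{2\pi}\frac{x}{|x|^2}$ is homogeneous of degree $-1$ with $|\nab N(x)|\le \frac{1}{2\pi|x|}$. First I would show that this convolution is well defined and bounded by splitting $\R^2$ into $B_1$ and $B_1^c$. On $B_1$ the kernel is integrable in dimension two, so that piece is bounded by $\|\nab N\|_{L^1(B_1)}\|u\|_{L^\infty}\le C\|u\|_{C^\gamma}$. On $B_1^c$ the hypothesis $1\le p<2$ forces the conjugate exponent $p'>2$, hence $\nab N\,\mathbf{1}_{B_1^c}\in L^{p'}(\R^2)$, and Hölder's inequality controls that piece by $C\|u\|_{L^p}$. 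This yields both the well-definedness and $\|\nab\Delta^{-1}u\|_{L^\infty}\le C(\|u\|_{C^\gamma}+\|u\|_{L^p})$; this is the one step where the assumption $p<2$ is essential.

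Next I would bound the Hessian $\nab^2\Delta^{-1}u$ in $L^\infty$. Its entries are given by the Calderón--Zygmund kernel $\partial_{ij}N(x)=-\frac{1}{2\pi|x|^2}\bigl(\delta_{ij}-\frac{2x_ix_j}{|x|^2}\bigr)$, homogeneous of degree $-2$ and of vanishing mean on circles centred at the origin. For the near region $|x-y|<1$ I would subtract $u(x)$ and use $|\partial_{ij}N(x-y)|\le C|x-y|^{-2}$ together with $|u(y)-u(x)|\le[u]_{C^\gamma}|x-y|^\gamma$; since $\int_{|z|<1}|z|^{\gamma-2}\,dz<\infty$ this gives a bound $\le C[u]_{C^\gamma}$, while the subtracted term $u(x)\,\mathrm{p.v.}\!\int_{|z|<1}\partial_{ij}N\,dz$ vanishes by the mean-zero property. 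For the far region $|x-y|\ge 1$ the kernel lies in $L^{p'}(B_1^c)$, so Hölder's inequality bounds that contribution by $C\|u\|_{L^p}$. Hence $\|\nab^2\Delta^{-1}u\|_{L^\infty}\le C(\|u\|_{C^\gamma}+\|u\|_{L^p})$.

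The heart of the matter is the Hölder-seminorm bound $[\nab^2\Delta^{-1}u]_{C^\gamma}\le C[u]_{C^\gamma}$, which is exactly the classical interior Schauder estimate for the Newtonian potential and which I would carry out for $0<\gamma<1$. Given $x,x'$ with $r:=|x-x'|$, one splits the domain into $B_{3r}(x)$ and its complement: on $B_{3r}(x)$ the two principal-value integrals are each estimated via the $C^\gamma$-cancellation as in the previous step, producing $\le C[u]_{C^\gamma}r^\gamma$; on the complement one pairs the kernel difference, controlled by the mean value theorem as $|\nab^2N(x-\cdot)-\nab^2N(x'-\cdot)|\le C\,r\,|\cdot-x|^{-3}$, against $u-u(x)$, and the integral $\int_{|z|\ge 3r}r\,|z|^{-3}[u]_{C^\gamma}|z|^\gamma\,dz$ converges to $C[u]_{C^\gamma}r^\gamma$ precisely because $\gamma<1$. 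The only adaptation to the present unbounded setting is ensuring convergence of all these integrals at infinity, which is guaranteed by the $L^p$ decay of $u$ established above. Combining the three steps gives $\|\nab\Delta^{-1}u\|_{C^{1,\gamma}}\le C_1(\|u\|_{C^\gamma}+\|u\|_{L^p})$, which is the stated estimate; this range $0<\gamma<1$ is exactly what is used in Theorem \ref{th-exi}, where the solution is constructed in $C^{1,\gamma'}$ with $\gamma'<\gamma$.

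The main obstacle, and the reason both hypotheses appear, is that the estimate is global on all of $\R^2$: the local singularities of the kernels are tamed by the $C^\gamma$ regularity of $u$, with the crucial cancellation coming from the mean-zero property of the degree $-2$ kernel, whereas the lack of any compact support or pointwise decay of $u$ is compensated by the $L^p$ hypothesis, whose exponent $p<2$ is precisely what places the degree $-1$ gradient kernel in $L^{p'}$ away from the origin. The one genuinely delicate ingredient is the Schauder seminorm bound of the third step, which degenerates at the endpoint $\gamma=1$; since the applications only invoke exponents strictly below $1$, this causes no difficulty here.
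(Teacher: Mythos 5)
Your argument is correct, and its first step (the $L^\infty$ bound on $\nab\Delta^{-1}u$ by splitting the degree $-1$ kernel between $B_1$, where it is integrable, and $B_1^c$, where $p<2$ puts it in $L^{p'}$) is exactly the paper's. Where you diverge is in the upgrade to $C^{1,\gamma}$: the paper does not touch the second-derivative kernel at all. It introduces $w$ with $\nab w=v$, observes $\Delta w=u$, and invokes the classical \emph{interior} Schauder estimate on the pair of balls $B(x,1)\subset B(x,2)$, namely $\|\nab w\|_{C^{1,\gamma}(B(x,1))}\le C(\|\nab w\|_{L^\infty(B(x,2))}+\|u\|_{C^\gamma(B(x,2))})$, and then takes the supremum over the center $x$; the globality of the estimate is thus obtained for free from the translation-invariance of the local constant, with the $L^p$ hypothesis entering only through the already-established $L^\infty$ bound on $\nab w$. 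You instead reprove the Schauder estimate by hand as a global singular-integral computation with the Calder\'on--Zygmund kernel $\p_{ij}N$, using the mean-zero cancellation near the diagonal and the $L^{p'}$ integrability at infinity. Your route is self-contained and makes visible exactly where each hypothesis is used, at the cost of more computation (and of the standard bookkeeping you elide: the identity $\p_{ij}(N*u)$ carries an extra multiple of $\delta_{ij}u(x)$ beyond the principal value, harmlessly absorbed into $C\|u\|_{C^\gamma}$, and the far-field part of the seminorm estimate should cap $|u(y)-u(x)|$ by $2\|u\|_{L^\infty}$ once $|y-x|\ge 1$ rather than by $[u]_{C^\gamma}|y-x|^\gamma$). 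The paper's route is shorter but leans on the cited elliptic estimate as a black box. Your caveat at $\gamma=1$ is well taken and applies equally to the paper's proof, since the interior Schauder estimate also fails at the Lipschitz endpoint; as you note, only exponents $\gamma'<\gamma\le 1$ are ever needed downstream in Theorem \ref{th-exi}.
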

  \begin{proof}Setting  $v= \nab \Delta^{-1} u$ we may write
  $$v(x)= -\frac{1}{2\pi}\int \frac{x-y}{|x-y|^2} u(y)\, dy$$ and one may check that this integral is convergent with 
  \begin{multline}\label{els}
  |v(x)|\le  C\|u\|_{L^\infty(\R^2)}  \int_{|y-x|\le 1} \frac{1}{|x-y|} \, dy + \|u\|_{L^p} \( \int_{|y-x|\ge 1} \frac{1}{|x-y|^{p'}} \)^{1/p'} \\
   \le C( \|u\|_{L^\infty(\R^2)} + \|u\|_{L^p(\R^2)} )\end{multline}
   with $1/p+1/p'=1$.
Let then $w$ be such that $v=\nab w$, and thus $\Delta w=u$.
  For any $x\in \R^2$, by Schauder estimates for elliptic equations and \eqref{els}, we have  \begin{align*} \|\nab w\|_{C^{1,\gamma}(B(x,1))} & \le  C (\| \nab w\|_{L^\infty(B(x,2))} + \|u\|_{C^{\gamma}(B(x,2))})\\
  &   \le  C( \|u\|_{L^p(\R^2)} +\|u\|_{C^{\gamma}(\R^2)}) .
\end{align*}
 Taking the sup over $x$ yields the result.
  \end{proof}
  %

\smallskip 

We now turn to the proof of Theorem \ref{th-exi} for which we set up an iterative scheme.

 Let $\v_0=\v(0)$ and $\om_0= \curl \v_0, \d_0=\div \v_0$.
Given $\v_n$ and $\om_n= \curl \v_n, \d_n= \div \v_n$, we want to solve
\begin{equation}\label{iteration}\left\{\begin{array}{l}
\p_t \om_{n+1} = 2 \div ((\b  \v_n+ \a\v_n^\perp) \om_{n+1} )\\ [2mm]
\p_t \d_{n+1} = \frac{\l}{\a} \Delta \d_{n+1} +2 \div ((\b \v_n^\perp -\a\v_n) \om_n)\\
\omega_{n+1}(0)= \om(0)\\
\d_{n+1}(0)= \d(0).\end{array}\right.\end{equation}
Let $$
t_n=\sup\{ t, K_n(t)   \le  2 C_1 C_0 K_0\}
$$  where 
$$K_n(t) = \|\om_{n}(t) \|_{C^{\g}} + \| \om_n(t) \|_{L^1} + \|\d_n(t)\|_{C^\g}+ \|\d_n(t)\|_{L^p} + \|\v_n(t)\|_{C^{1,\g} } $$
$$K_0=  \|\omega(0)\|_{L^1(\R^2)} + \|\omega(0)\|_{C^{\g} } + \|\d(0)\|_{L^p}+\|\d(0)\|_{C^\g},$$ and $C_0$ is the maximum  of the constants in \eqref{esttransport}, \eqref{estheat}, and $C_1$ the maximum of the constant in  \eqref{lemb3} and $1$.

Let us show by induction that \eqref{iteration} is solvable  and there exists $ T_0>0$ independent of $n$ such that $t_n>T_0$ for all $n$.

For $n=0$, this statement is true by assumption. Assume it is true for $n$.  
Then in view of Lemma \ref{lem1}, we have that the equation \eqref{iteration} for $n+1$ has a solution $\om_{n+1}\in L^\infty([0,T],C^{\g}\cap L^1)$,  with, if $T \le T_0< t_n$,
\begin{align}\label{estsolsyst}
\lefteqn{
\|\om_{n+1}  \|_{L^\infty([0,T],C^{\g}) }
} \qquad & \\
& \le
 C_0\Big(\| \om(0) \|_{ C^\g } + \|\om(0)\|_{L^\infty}  \int_0^T  \|\v_n\|_{C^{1,\g}} (s)\, ds   \Big)   \exp\(  C\int_0^T   \|\v_n\|_{C^{1}} (s)\, ds\) \notag
\\
& \le 
 C_0\(\| \om(0) \|_{ C^\g } + \|\om(0)\|_{L^\infty}  2 C_1 C_0T K_0   \) \exp \(2 C_1C_0 C T  K_0\). \notag
\end{align}
 Also it is straightforward to see  by integrating the equation and since $\omega_{n+1} \in L^1$ by \eqref{estimeel1}  that $\frac{1}{2\pi}\om_{n+1}$  remains a  probability measure.

Similarly, Lemma \ref{lem2} yields the existence of a solution  $\d_{n+1} \in L^\infty([0,T], C^\g)$ with, if $T\le T_0<t_n,$
\begin{multline}\label{estsolsyst2}
\|\d_{n+1}\|_{L^\infty([0,T] ,C^\g) }\le C_0 ( \|\d(0)\|_{C^\g} +C \sqrt{T} \| \v_n\om_n\|_{C^\g}   )\\ \le C_0(\|d(0)\|_{C^\g} + C\sqrt{T} C_1 C_0^3 K_0^2 ) ,
\end{multline}where $C$ depends only on $\alpha$ and $\lambda$; and by Lemma \ref{lemheqt2} applied with $q=1$,
\begin{multline}\label{B10}\|\d_{n+1}\|_{L^\infty([0,T],L^p)} \le \|\d(0)\|_{L^p} + CT^{\frac{1}{p}-\frac{1}{2}}\|\v_{n} \omega_{n+1}   \|_{L^\infty([0,T], L^1)}\\
\le \|\d(0)\|_{L^p}+CT^{\frac{1}{p}-\frac{1}{2}} C_0 C_1 K_0.
\end{multline}
We then let \begin{equation}\label{defvnp1}\v_{n+1}= \nab \Delta^{-1}\d_{n+1}- \np \Delta^{-1}\om_{n+1}.\end{equation} By Lemma \ref{lempt}, this is well-defined and
 \begin{equation}\label{regul}
\|\v_{n+1}\|_{C^{1,\g}} \le   C_1 \( \|\d_{n+1}\|_{C^\g} +\|d_{n+1}\|_{L^p}+\|\om_{n+1}\|_{C^{\g}}+\|\om_{n+1}\|_{L^1}\).
\end{equation}In view of  \eqref{estsolsyst}, \eqref{estsolsyst2}, \eqref{B10}, and \eqref{regul}  we then deduce that if $T_0$ is chosen small enough (depending on  $K_0$ and the various constants), then we will find that $t_{n+1} \ge T_0$. 
The desired result is thus proved by induction.

Let us now show that $\{\om_n\}$ and $\{\d_n\}$ are Cauchy sequences in $C^{-1+\g}$. 
By subtracting the equations for $n$ and $n+1$ we have 
\begin{equation}\label{iteration2}\left\{\begin{array}{l}
\p_t(\om_{n+1} -\om_n)= 2 \div \Big((\b\v_{n}+\a\v_{n}^\perp)(\om_{n+1}-\om_n) \\
\quad + (\b(\v_{n}-\v_{n-1})+\a(\v_{n}-\v_{n-1})^\perp) \om_n\Big)  \\ [3mm]
\p_t (\d_{n+1}-\d_n) = \frac{\l}{\a} \Delta (\d_{n+1}-\d_n) \\+ 2\div \( (\b\v_{n}^\perp-\a\v_{n})(\om_{n}-\om_{n-1}) + (\b(\v_{n}-\v_{n-1})^\perp-\a(\v_{n}-\v_{n-1})) \om_n    \).\end{array}\right.\end{equation} 
Since $\om_n(0)= \om(0)$ for all $n$, applying \eqref{esttransport} with $\sigma= -1+ \g$,  and the result of the previous step,  we have
\begin{align*}
& \|\om_{n+1}-\om_n\|_{L^\infty([0,T],C^{\sigma})} \\
& \le e^{C\int_0^T \|\nab\v_n\|_{L^\infty} \, ds}  \int_0^T \|\div ((\beta (\v_n-\v_{n-1})+ \a (\v_n-\v_{n-1})^\perp\om_n)\|_{C^\sigma} \, ds \\
&
\le  Ce^{CT}T \|(\v_n-\v_{n-1})\om_n\|_{L^\infty([0,T],C^{1+\sigma})}\\
& \le  C e^{CT}T\( \|\om_n-\om_{n-1}\|_{L^\infty([0,T], C^\sigma)} +\|\d_n-\d_{n-1}\|_{L^\infty([0,T],C^\sigma)}\).
 \end{align*} 
Similarly, in view of \eqref{estheat},
\begin{multline*}
 \|\d_{n+1}-\d_n\|_{L^\infty([0,T],C^{\sigma}) }\le   C\sqrt{T}   \Big(\|\v_n (\om_{n}-\om_{n-1})\|_{L^\infty([0,T], C^\sigma)}\\ + \|(\v_n-\v_{n-1}) \om_n\|_{L^\infty([0,T], C^\sigma)} \Big)
\\ \le  C\sqrt{T}  \(  \|\om_n-\om_{n-1}\|_{L^\infty([0,T],C^\sigma)} + \|\d_n-\d_{n-1}\|_{L^\infty([0,T], C^\sigma)}\),
 \end{multline*} 
 if $\sigma <-1+\g$.
We deduce by standard arguments that  $\{\om_n\}$
 and $\{\d_n\}$ form a Cauchy sequence   in $L^\infty([0,T], C^\sigma)$ if $T$ is taken small enough.
 By interpolation $\{\v_n\}$ is a  Cauchy sequence in $C^{1,\g'}$, for $\g'<\g$ (resp. $\{\om_n\}$ in $C^{\g'} $ and $\{\d_n \}$ in $C^{\g'}$). The limits $\v$, $\om, \d$ will obviously solve \eqref{syst}, $\omega$ will be in $L^\infty([0,T],L^1(\R^2))$, $\d $ in $L^\infty([0,T],L^p(\R^2))\cap L^2([0,T],H^1(\R^2))$  by Lemmas \ref{lem2} and \ref{lemheqt2}, and (from \eqref{defvnp1})
 \begin{equation}\label{vv}
 \v=\nab \Delta^{-1} \d  - \np \Delta^{-1} \om ,\end{equation}
  with  $\v \in L^\infty([0,T], C^{1,\g'})$. 
    Taking the time derivative of \eqref{vv} and using \eqref{syst},  it is standard to deduce that $\v$ must solve \eqref{limgm}. By integrating the equation, using that $\v$ is bounded and $\omega \in L^1(\R^2)$,  we also have that $\frac{1}{2\pi}\omega $ remains a probability density. 
  \\
 
 Next, we prove that $\v-\v(0)$ remains in $L^2(\R^2)$. 
 Using \eqref{limgm} and integration by parts, for $\zeta(x)= e^{-\eta |x|}$ with $0<\eta<1$ we compute
 \begin{multline*}\frac{d}{dt}\i \zeta |\v(t)-\v(0)|^2 
 = 2\i \zeta (\v-\v(0))\cdot \( \frac{\lambda}{\alpha} \nab \div \v+ 2\beta \v^\perp\curl \v - 2\alpha \v \curl \v\)
 \\
 = - \frac{2\lambda}{\alpha} \i \zeta (\div \v)^2  + \frac{2\lambda}{\alpha } \i \zeta (\div \v)(\div \v(0)) -\frac{2\lambda}{\alpha } \i \nab \zeta \cdot (\v-\v(0)) \div \v \\- 4\alpha \i\zeta  |\v|^2 \omega +\zeta \v\cdot \v(0) \omega -4 \beta \i \zeta \v^\perp \cdot \v(0) \omega\end{multline*}
 and  using Young's inequality, the fact that $|\nab \zeta|\le |\zeta|$ and that $\div \v\in L^2$, we find 
 $$\frac{d}{dt}\i \zeta |\v-\v(0)|^2 
\le C\( \i\zeta |\div \v(0)|^2 +  \i \zeta |\v(0)|^2 \omega +\i \zeta |\v-\v(0)|^2\)$$ with a constant $C$ depending on $\alpha, \beta $ and $\lambda$.
Using Gronwall's lemma, since $\i|\omega|$ is uniformly bounded and $\v(0)$ is bounded, we deduce that $\i \zeta |\v(t)-\v(0)|^2 
\le C_t $, with $C_t$ independent from $\eta$. Letting then $\eta \to 0$,  the desired claim  follows.
 \\
 
 To prove uniqueness, it suffices to apply the same idea of weak-strong uniqueness as used in the proof of the main results: if $\v_1$ and $\v_2$ are two  solutions of \eqref{limgm}, we  introduce the ``relative" energy
 $E(t)= \hal \i |\v_1(t)-\v_2(t)|^2$, and the relative stress-energy tensor 
 $$T=(\v_1-\v_2)\otimes (\v_1-\v_2) -\hal |\v_1-\v_2|^2 I.$$
By direct computation we have 
\begin{multline}\label{nrj}
\frac{d}{dt}E= \i (\v_1-\v_2)\Big(\frac{\lambda}{\a}\nab (\div \v_1-\div \v_2) +2\b (\v_1^\perp\curl \v_1 -\v_2^\perp \curl \v_2 )\\  -2\a (\v_1\curl\v_1-\v_2\curl \v_2)\Big)\\
= \i -\frac{\lambda}{\a} |\div (\v_1-\v_2)|^2  + 2\b (\v_1-\v_2)\cdot \v_2^\perp  \curl (\v_1-\v_2)\\ +
\i -2\a |\v_1-\v_2|^2 \curl \v_1 -2\a(\v_1-\v_2)\cdot \v_2 \curl (\v_1-\v_2).
\end{multline}
and 
\begin{equation}\label{ddt}
\div T=(\v_1-\v_2)\div \v_1+(\v_1-\v_2)^\perp \curl (\v_1-\v_2).\end{equation}
Multiplying \eqref{ddt} by $2\b \v_2+2\a\v_2^\perp$ and rearranging terms, we find
\begin{multline*}
\i (2\b\v_2+2\a\v_2^\perp) \cdot \div T=\i (2\b \v_2+2\a\v_2^\perp) \cdot  (\v_1-\v_2) \div (\v_1-\v_2)\\+\i (2\a \v_2-2\b \v_2^\perp) \cdot  (\v_1-\v_2) \curl (\v_1-\v_2),\end{multline*}
and inserting into \eqref{nrj} and using one integration by parts,  we obtain
\begin{multline*}
\frac{d}{dt} E\le  -\i \frac{\lambda}{\a} |\div (\v_1-\v_2)|^2 +\i  T : \nab  (2\b\v_2+2\a \v_2^\perp)\\
+ \i (2\b \v_2+2\a \v_2^\perp) \cdot  (\v_1-\v_2) \div (\v_1-\v_2).\end{multline*}
We may next use the boundedness of $\v_2$ and $\nab \v_2$, bound $|T|$ by $2E$, and use the Cauchy-Schwarz inequality to absorb the last term into the first negative term plus  a constant times $E$, to finally obtain a differential inequality of the form $\frac{d}{dt} E \le C E$. The integrations by parts can easily be justified by using cut-off functions like $\chi_R$ and taking the limit, using the fact that $\v_i-\v(0)\in L^2(\R^2)$ and $\div (\v_1-\v_2) \in L^2(\R^2)$.  We then conclude to uniqueness by Gronwall's lemma.

\section{The gauge case}\label{gauge}
In this appendix, we present in a formal manner the quantities that should be introduced and the computations which need to be followed to obtain the limiting dynamical laws in the two-dimensional case with gauge mentioned in Section \ref{sec:gauge} of the introduction. We use the notation of that section, in particular the starting point is \eqref{glg}.
 In the rest of this appendix,  we will not write down negligible terms, such as terms involving $1-|u_\ep|^2$.   We will however present the computations in the general case of the mixed flow \eqref{glg}, which can thus be used as a model for studying the mixed flow in the case without gauge. 

 We start by introducing some notation: we set    $$\p_\Phi:= \p_t - i \Phi_\ep$$ and 
 \begin{equation}\label{C1}\phi:=\begin{cases}\pp & \text{in cases leading to } \eqref{le1} \\
 \frac{\lambda}{\a} \div \v & \text{in cases leading to } \ \eqref{le2}.\end{cases}\end{equation} 
 We will also denote $\Phi'= \Phi_\ep+\N\phi$ and $\p_{\Phi'}=\p_t -i(\Phi_\ep-\N\phi)$, $A'= A_\ep+\N \v$ (dropping the $\ep$) and $h_\ep= \curl A_\ep$.
  We define  the velocity as  in (2.43) and Lemma 2.12 in \cite{tice}:
\begin{multline}\label{defVg}
V_\ep := - \nab \la iu_\ep,  \p_{\Phi_\ep} u_\ep\ra + \p_t \la iu_\ep,  \nab_{A_\ep} u_\ep\ra - E_\ep \\
=  2\la i \p_{\Phi_\ep} u_\ep,  \nab_{A_\ep} u_\ep \ra +(  |u_\ep|^2-1  ) E_\ep 
\end{multline} 
thus 
\begin{equation}\label{dtjg}
\p_t j_\ep= \nab \la iu_\ep, \p_{\Phi_\ep} u_\ep \ra + V_\ep +E_\ep.\end{equation} The modulated velocity is then defined as 
\begin{equation}\label{tvvv}
\tilde V_\ep=   2\la i \p_{\Phi'} u_\ep,  \nab_{A'} u_\ep\ra +(  |u_\ep|^2-1  ) E_\ep .
\end{equation}
 The modulated energy is defined as 
 \begin{equation}
 \label{nev}
 \E(u_\ep, A_\ep)= \hal\i |\nab_{A_\ep} u_\ep -iu_\ep \N \v|^2 + |\curl A_\ep - \N \h|^2 + \frac{(1-|u_\ep|^2)^2}{2\ep^2}+  (1-|u_\ep|^2) \psi
 \end{equation}
 with \begin{equation}\label{choix}
 \psi=\beta  \phi- |\v|^2 .\end{equation}
 We will also use the fact that for $u_\ep$ solution of \eqref{eq} we have the relation 
\begin{equation}\label{divjig}
\div j_\ep= \N \la (\frac{\a}{\lep}+i\b)\p_\Phi u_\ep, iu_\ep\ra.\end{equation}
The stress energy tensor is now
\begin{multline}\label{defT2}
(S_\ep(u,A))_{kl}:=\\
 \la (\p_k u-iA_k u, \p_l u -iA_l u \ra - \hal \( |\nab_A u|^2 + \frac{1}{2\ep^2} (1-|u|^2)^2- |\curl A|^2 \)\delta_{kl}.\end{multline}
A direct computation shows that if $u$ is sufficiently regular, we have
\begin{multline*}\div S_\ep(u,A) :=\sum_l \p_l (S_\ep(u,A))_{kl}\\
= \la \nab_A u, \nab_A^2 u + \frac{1}{\ep^2} u (1-|u|^2)\ra - \curl A (  \np \curl A + \la iu, \nab_A u\ra  )^\perp \end{multline*} so
 if $u_\ep$ solves \eqref{eq}, we have 
\begin{equation}\label{divTg}
\div S_\ep(u_\ep,A_\ep)=\N \la   (\frac{\a}{\lep} + i \b) \p_\Phi u_\ep, \nab_{A_\ep} u_\ep \rangle  +\sigma  E_\ep^\perp\curl A_\ep    .\end{equation}
For simplicity, we will denote it by $S_\ep$.
The modulated stress tensor is now defined by 
\begin{multline}
(\T (u,A))_{kl}= \la \p_k u -iu A_k-iu  \N \v_k, \p_l u -iu A_l u- iu \N \v_l\ra  + \N^2 (1-|u|^2 ) \v_k \v_l 
\\- \hal \( |\nab_A u-iu\N\v |^2 + (1-|u|^2) \N^2|\v|^2+ \frac{1}{2\ep^2} (1-|u|^2)^2- |\curl A-\N \h|^2\)\delta_{kl},\end{multline}
and by  direct computation
\begin{multline}\label{reldivtg}
\div \T = \div S_\ep + \N^2 \v \div \v + \N^2\v^\perp\curl \v -\N j_\ep  \div \v\\
-\N j_\ep^\perp \curl \v
-\N \v  \div j_\ep- \N \v^\perp \curl j_\ep +  \nab(\hal \N^2 \h^2- \N \h h_\ep) .
\end{multline}
Combining \eqref{divTg} and \eqref{reldivtg} and  we obtain
\begin{multline} \label{divttg}
\div \T=  \N \la   (\frac{\a}{\lep} + i \b) \p_\Phi u_\ep, \nab_{A_\ep} u_\ep \rangle  + \sigma h_\ep E_\ep^\perp + \N^2 \v \div \v + \N^2\v^\perp\curl \v\\ -j_\ep \N \div \v-\N j_\ep^\perp \curl \v
- \N \v  \div j_\ep- \N \v^\perp \curl j_\ep +  \nab(\hal \N^2 \h^2- \N \h h_\ep) .
\end{multline}

Writing $\p_\Phi u_\ep = \p_{\Phi'} u_\ep+ iu_\ep \N\phi 
 $ and  $\nab_{A}u_\ep= \nab_{A'} u_\ep+ iu_\ep \N \v$, and inserting \eqref{defVg} and 
\eqref{divjig} into \eqref{divttg},  we are led to 
\begin{multline}\label{divtt2g}
\div \T = \frac{\N\a}{\lep}  \la \p_{\Phi'} u_\ep  , \nab_{A'} u_\ep\ra + \sigma h_\ep E_\ep^\perp  +  \nab(\hal \N^2 \h^2- \N \h h_\ep)\\ +\frac{\b}{2}\N V_\ep -\frac{ \beta}{2} \N^2 \v\p_t |u_\ep|^2  + \N^2\v^\perp\curl \v -\N j_\ep^\perp \curl \v
- \N \v^\perp \curl j_\ep+o(1).\end{multline}
Multiplying relation \eqref{divtt2g}  by $2\b\v+2\a\v^\perp$  yields 
\begin{align*}
& \i   ( 2\b\v +2\a \v^\perp) \cdot \div \T 
\\ &=  \i     \left(   \frac{\N\a}{\lep} \la \p_{\Phi'} u_\ep , \nab_{A'}  u_\ep\ra +
 \frac{\b}{2}\N V_\ep -\frac{\b}{2} \N^2 \v \p_t |u_\ep|^2 \right) \cdot (2\b \v + 2\a \v^\perp)  
 \\ & + \i\   2\a \N^2 |\v|^2 \curl \v + \N j_\ep \cdot (2\b\v^\perp -2\a \v)  \curl \v
  -2\a \N|\v|^2 \curl j_\ep   \\ & + \i \nab(\hal \N^2 \h^2- \N \h h_\ep)\cdot (2\b \v + 2\a \v^\perp)-\sigma h_\ep E_\ep (2\beta \v^\perp - 2\a\v)+o(1).
\end{align*}This allows to express $\i \N \b^2 V_\ep \cdot \v $ so that 
splitting  $ V_\ep \cdot \v $ as $\a^2 V_\ep \cdot \v+ \b^2 V_\ep \cdot \v$ (since $\a^2+\beta^2=1$)  and inserting the previous relation we may obtain
\begin{align}\label{neVg}
& -\i  \N V_\ep \cdot \v  =\i    \N   V_\ep \cdot (-\a^2\v +\a \b \v^\perp) -\N^2 \b^2|\v|^2 \p_t |u_\ep|^2 \\ \nonumber &
 +\i ( 2\b\v +2\a \v^\perp) \cdot \( - \div \T(u_\ep)+ \frac{\N\a}{\lep} \la \p_{\Phi'} u_\ep, \nab_{A'} u_\ep\ra 
\) \\ \nonumber &
+\i   2\a \N^2 |\v|^2 \curl \v 
+ \N j_\ep \cdot (2\b\v^\perp -2\a \v)  \curl \v
  -2\a \N|\v|^2 \curl j_\ep   \\ \nonumber
 & +\i  \nab(\hal \N^2 \h^2- \N \h \curl A)\cdot (2\b \v + 2\a \v^\perp)-  \sigma h_\ep E_\ep(2\beta \v^\perp- 2\a \v)+o(1) .
\end{align}
The analogue of Lemma \ref{lemdte} combined with \eqref{le1} or \eqref{le2} is 
\begin{align*}
& \frac{d}{dt}\E(u_\ep) = -\i   \frac{\N\a}{\lep} |\p_\Phi u_\ep|^2 + \sigma  |E_\ep|^2 +\i \N\la iu_\ep, \p_\Phi u_\ep \ra \div \v   \\ &  +\i \N^2 \v\cdot \p_t \v
-  \N j_\ep  \cdot \p_t \v+ \N ( -V_\ep-E_\ep)  \cdot  \v +\N \p_t \h ( \N \h -h_\ep) - \N \h \p_t h_\ep\\ & + \i \hal \N^2\p_t\((1- |u_\ep|^2)  (\psi-|\v|^2)\).
\end{align*} Using \eqref{le1} or \eqref{le2}, we may write 
\begin{multline*}
\i \N^2 \v\cdot \p_t \v  - \N j_\ep  \cdot \p_t \v \\
= \i \N  (\N \v-j_\ep) \cdot \(\mathrm{E}+ (-2\a \v+ 2\b\v^\perp)(\curl \v + \h) + \nab \phi\).
\end{multline*}
Next, we insert  again $\p_\Phi u_\ep=\p_{\Phi'}u_\ep+ iu_\ep\N \phi$ and write that 
\begin{equation}\label{nvj}
\N \v - j_\ep =  - \np (\N \h - h_\ep) - \sigma (\N \mathrm{E}-E_\ep)\end{equation} which holds  by subtracting the equations \eqref{glg} and \eqref{le1}--\eqref{le2},
and  we insert \eqref{neVg} and use an integration by parts to obtain
\begin{align*}
&\frac{d}{dt} \E= \i \T : \nab (2\beta \v + 2\a \v^\perp) + \N  V_\ep \cdot (-\a^2 \v + \a \b \v^\perp)  -\N^2 \b^2|\v|^2 \p_t |u_\ep|^2\\
&  -   \i  2\a \N |\v|^2 \curl j_\ep +  \N \phi \div (\N \v-j_\ep)
\\
&  -\i   \frac{\N\a}{\lep} |\p_{\Phi'} u|^2 - \frac{\N\a}{\lep} \la \p_{\Phi'} u, \nab_{A'} u\ra \cdot (2\beta \v+ 2\a \v^\perp) \\
& +\i\N ( - \np (\N \h - h_\ep) - \sigma (\N \mathrm{E}-E_\ep))\cdot ((-2\a \v + 2\b \v^\perp) \h+\mathrm{E}) \\& + \i  - \sigma |E_\ep|^2   - \N E_\ep \cdot \v - \N \curl \mathrm{E} (\N \h - h_\ep)+\N \h \curl E_\ep
   \end{align*}
\begin{align*}
 & +\i  \nab(\hal \N^2 \h^2- \N \h h_\ep)\cdot (2\b \v + 2\a \v^\perp)- \sigma h_\ep E_\ep (2\beta \v^\perp - 2\a\v)
  \\ & + \i \hal \N^2\p_t\((1- |u_\ep|^2)  (\psi-|\v|^2)\)
    +o(1).\end{align*}
    Next, we replace $V_\ep$ by $\tilde V_\ep+\N \v\p_t |u_\ep|^2 - \N \phi\nab |u_\ep|^2$ obtained by comparing \eqref{defVg} and \eqref{tvvv},  and use \eqref{divjig} to reexpress $\div j_\ep$.  By choice \eqref{choix}, the terms in factor of $\p_t (1-|u_\ep|^2)$ cancel, and the terms in factor of $\phi, \div \v$ and $\la \p_{\Phi} u_\ep, iu_\ep\ra$ formally cancel (up to small errors) as in the parabolic case. 
    Then, we insert 
    \begin{align*}& \sigma h_\ep E_\ep \cdot (2\beta \v^\perp -2 \a \v)\\ &= 
\sigma h_\ep (E_\ep - \N \mathrm{E})  \cdot (2\beta\v^\perp-2\a\v)+
\N h_\ep( -\v -\np \h ) \cdot (2\beta \v^\perp-2 \a \v) 
\\ & = 
\sigma h_\ep (E_\ep - \N \mathrm{E})  \cdot (2\beta\v^\perp-2\a\v) + 2\a\N  |\v|^2 h_\ep  - \N h_\ep \np \h \cdot ( 2\beta \v^\perp-2\a \v).
\end{align*}
The term  $2\a\N |\v|^2 h_\ep$ gets grouped with $2\a \N |\v|^2 \curl j_\ep $ to form $-2\a |\v|^2 \mu_\ep$.
Combining all these elements, there remains
\begin{align*}
& \frac{d}{dt} \E= \i \T : \nab (2\beta \v + 2\a \v^\perp) + 
\N \tilde V_\ep \cdot (-\a^2 \v + \a \b \v^\perp)   -  2\a \N |\v|^2 \mu_\ep \\
 & \i  -  \frac{\N\a}{\lep} |\p_{\Phi'} u|^2 + \frac{\N\a}{\lep} \la \p_{\Phi'} u, \nab_{A'} u\ra \cdot (2\beta \v+ 2\a \v^\perp) \\ &
 +\i - \N \np (\N \h - h_\ep) \cdot ((-2\a \v + 2\b \v^\perp)  \h  + \mathrm{E})\\
 & + \i  \sigma     ( 2\a \v - 2\b \v^\perp) (\N\mathrm{E}-E_\ep)(\N \h-h_\ep) - \N \curl \mathrm{E} (\N \h - h_\ep)\ \\
& + \i  -\sigma \N ( \N \mathrm{E}-E_\ep) \N\mathrm{E} - \sigma |E_\ep|^2  +\N \h \curl E_\ep - \N E_\ep \cdot \v \\ &
  +\i  \nab(\hal \N^2 \h^2- \N \h h_\ep) \cdot (2\b \v + 2\a \v^\perp)+  \N h_\ep \nab \h \cdot (2\beta \v+2\a \v^\perp)+o(1).\end{align*}
Keeping only the last three lines, after some simplifications using again \eqref{le1}, and some integration by parts, these three lines can be rewritten as
\begin{align*} &\i -  \N \h\nab (\N \h - h_\ep) \cdot (2\a \v^\perp + 2\b \v)   + \sigma (\N\mathrm{E}-E_\ep) ( 2\a \v - 2\b \v^\perp) (\N \h-h_\ep) \\
& \i  - \sigma |E_\ep|^2   - \N E_\ep \cdot \v+\N \h \curl E_\ep
  \\ &+\i\N \h  \nab(\N\h -  h_\ep)\cdot (2\b \v + 2\a \v^\perp) -\sigma\N (\N\mathrm{E}-E_\ep) \mathrm{E}+o(1)\\
 &  = \i \sigma (\N\mathrm{E}-E_\ep) ( 2\a \v - 2\b \v^\perp) (\N \h-h_\ep) 
+  \i  - \sigma |E_\ep-\N\mathrm{E}|^2 +o(1)  .
\end{align*}
We thus finally obtain 
\begin{align}\label{infg}
& \frac{d}{dt} \E= \i \T : \nab (2\beta \v + 2\a \v^\perp) + 
\i \N \tilde V_\ep \cdot (-\a^2 \v + \a \b \v^\perp)   - \i 2\a \N |\v|^2 \mu_\ep \\
\nonumber &  \i  -  \frac{\N\a}{\lep} |\p_{\Phi'} u_\ep|^2 + \frac{\N\a}{\lep} \la \p_{\Phi'} u_\ep, \nab_{A'} u_\ep\ra \cdot (2\beta \v+ 2\a \v^\perp) \\
 \nonumber & \i \sigma (\N\mathrm{E}-E_\ep) ( 2\a \v - 2\b \v^\perp) (\N \h-h_\ep) 
+  \i  - \sigma |E_\ep-\N\mathrm{E}|^2   +o(1).
\end{align}
For the terms involving $E$ we write with Young's inequality
$$ \i \sigma (\N\mathrm{E}-E_\ep) ( 2\a \v - 2\b \v^\perp) (\N \h-h_\ep) 
\le \sigma \i |E_\ep - \N \mathrm{E}|^2+ C \i |\N \h - h_\ep|^2$$
and we deduce that the last line in \eqref{infg} is bounded above by $C \i |\N \h - h_\ep|^2$, which we claim it itself bounded by a constant time the energy excess  $\E-\pi \N \lep$. Indeed, in writing down the analogue of Proposition \ref{lem43} and \ref{exces}, one may replace $\i |\curl A_\ep- \N \h|^2$ in \eqref{nev} by half of itself and still obtain the same optimal lower bounds for the energy in the balls, so we deduce that $\i|\curl A_\ep- \N \h|^2$ must be bounded by the order of the energy excess. 

To finish, we use Young's inequality again  to bound \begin{multline*}
\i \frac{\N\a}{\lep} \la \p_{\Phi'} u_\ep, \nab_{A'} u_\ep\ra \cdot (2\beta \v+ 2\a \v^\perp)\\ \le \frac{\N\a}{\lep}\(\hal  \i |\p_{\Phi'} u_\ep|^2 + \hal \i |\nab_{A'}u_\ep\cdot (\b \v+\a\v^\perp)|^2\)\end{multline*}
and the product estimate to formally bound 
\begin{multline*}
\i \N \tilde V_\ep \cdot (-\a^2 \v + \a \b \v^\perp)  \\
\le \frac{\N\a}{\lep} \(\hal  \i |\p_{\Phi'} u_\ep|^2 + 2 \i |\nab_{A'}u_\ep\cdot (-\a \v+\b\v^\perp)|^2\)
\end{multline*}
 and using that $\a\v-\b\v^\perp =(\b\v+\a\v^\perp)^\perp$ and $|\b\v+\a\v^\perp|^2=|\v|^2$, we see that these two relations add up to a left-hand side bounded by 
\begin{multline*}
 \frac{\N\a}{\lep} \( \i |\p_{\Phi'} u_\ep|^2 + 2 \i |\nab_{A'}u_\ep|^2 |\v|^2\)\end{multline*} which will recombine with $ - \i  \frac{\N\a}{\lep} |\p_{\Phi'} u_\ep|^2 +2\a \N |\v|^2 \mu_\ep$ into  a term bounded by $C(\E-\pi \N \lep)$. 
 Inserting into \eqref{infg} and replacing the use of $\v$ by that of $\bar \v$ in the case with dissipation, we may then obtain a Gronwall relation $\frac{d}{dt}\E\le C(\E-\pi \N \lep)+o(\N^2)$ as in the case without gauge, and conclude in the same way.


\begin{thebibliography}{99}
%
%
\bibitem[AGS]{ags} L. Ambrosio, N. Gigli, G. Savar\'e, {\em Gradient flows in metric spaces and in the Wasserstein space of probability measures}, Birkh\"auser, (2005).

\bibitem[AS]{as} L. Ambrosio, S. Serfaty, A gradient flow approach to an evolution problem arising in
superconductivity,  {\it  Comm. Pure
Appl. Math.}    {\bf 61}  (2008),  no. 11, 1495--1539.

\bibitem[BBH]{bbh}F. Bethuel, H. Brezis and F. H\'elein, {\em
Ginzburg-Landau
  Vortices}, Birkh\"auser, (1994).
  \bibitem[AK]{ak}I. S. Aranson, L. Kramer, The world of the complex Ginzburg-Landau equation, {\it Rev. Mod. Phys. } {\bf 74} (2002), 99-143.
  
  \bibitem[BCD]{bcd} H. Bahouri, J.-Y. Chemin, R. Danchin, {\em Fourier analysis and nonlinear partial differential equations}, Springer, 2011.
  
  \bibitem[BJS]{bjs} F. Bethuel, R. L. Jerrard, D. Smets, On the NLS dynamics for infinite energy vortex configurations on the plane, {\it 
Rev. Mat. Iberoam.} {\bf  24} (2008), no. 2, 671--702. 

\bibitem[BOS1]{bos1}F. Bethuel, G. Orlandi, D.  Smets, Collisions and phase-vortex interactions in dissipative Ginzburg-Landau dynamics. {\it  Duke Math. J.} {\bf  130} (2005),  no. 3, 523--614.
\bibitem[BOS2]{bos2}F. Bethuel, G. Orlandi, D.  Smets, Quantization and motion law for Ginzburg-Landau vortices. {\it Arch. Ration. Mech. Anal.} {\bf 183} (2007),  no. 2, 315--370.
\bibitem[BOS3]{bos3}F. Bethuel, G. Orlandi, D.  Smets, Dynamics of multiple degree Ginzburg-Landau vortices. {\it Comm. Math. Phys.} {\bf  272}  (2007),  no. 1, 229--261.


\bibitem[BR1]{br}F. Bethuel, T. Rivi\`ere, Vortices for a Variational Problem
Related to Superconductivity, {\em Annales IHP, Anal. non
lin.}  {\bf 12} (1995), 243-303.
 \bibitem[BDGSS]{bdgs} F. Bethuel, R. Danchin, P. Gravejat, J.C. Saut, D. Smets, Les \'equations d'Euler, des ondes et de Korteweg-de Vries comme limites asymptotiques de l'\'equation de Gross-Pitaevskii, s\'eminaire E. D. P (2008-2009), Exp. I, {\it  \'Ecole Polytechnique}, 2010.
   \bibitem[BS]{bs} F. Bethuel, D. Smets, A remark on the Cauchy problem for the 2D Gross-Pitaevskii equation with nonzero degree at infinity. {\it Diff. Int. Eq.} {\bf 20} (2007), no. 3, 325--338.
  
 \bibitem[Br]{brenier} Y. Brenier, Convergence of the Vlasov-Poisson system to the incompressible Euler equations, {\it Comm. PDE} {\bf 25}, (2000), 737--754.
 
 
 \bibitem[CDS]{danchin} R. Carles, R. Danchin, J.-C. Saut, Madelung, Gross-Pitaevskii and Korteweg, {\it Nonlinearity} {\bf 25} (2012), no. 10, 2843--2873. 
\bibitem[CRS]{crs}S. J. Chapman, J. Rubinstein,   M. Schatzman, A
  Mean-field Model of Superconducting Vortices, {\it Eur. J. Appl. Math.} {\bf  7}, No. 2, (1996),
   97--111.
  \bibitem[Ch1]{chemin} J.-Y. Chemin, {\em Perfect incompressible fluids}.  Oxford Lecture Series in Mathematics and its Applications, 14. T Oxford University Press, 1998.
  \bibitem[Ch2]{chemin2} J.-Y. Chemin, Th\'eor\`emes d'unicit\'e pour le syst\`eme de Navier-Stokes tridimensionnel, {\it J. Anal. Math.} {\bf 77} (1999), 77, no. 1, 27--50.
     \bibitem[CJ1]{cj1}J. Colliander, R. L. Jerrard, Vortex dynamics for the
Ginzburg-Landau-Schr\"odinger equation, {\it Inter. Math. Res.
Notices} {\bf 7} (1998), 333--358.
\bibitem[CJ2]{cj2}J. Colliander, R. L. Jerrard, Ginzburg-Landau
vortices: weak stability and Schr\"odinger equations dynamics,
{\it J. Anal. Math.} { \bf 77}, (1999), 129-205.
\bibitem[De]{delort} J.-M. Delort, Existence de nappes de tourbillon en dimension deux, {\it JAMS} {\bf 4} (1991), 553--586.
\bibitem[Do]{dorsey} A. Dorsey, Vortex motion and the Hall effect in type II superconductors: a time-dependent Ginzburg-Landau approach. {\it Phys. Rev. B}  {\bf 46} (1992), 8376--8392.
  \bibitem[DZ]{duzhang}Q. Du, P. Zhang, Existence of weak solutions to some vortex density models.
{\it SIAM J. Math. Anal.} {\bf 34} (2003), 1279--1299.

\bibitem[Du]{du} M. Duerinckx, Well-posedness for mean-field evolutions arising in superconductivity, forthcoming.


   \bibitem[E1]{E}{W. E, Dynamics of vortices in Ginzburg-Landau
  theories with applications to superconductivity, {\em Phys. D,
  77,}(1994), 383-404.}
 \bibitem[E2]{E2}W. E,  Dynamics of vortex liquids in Ginzburg-Landau theories with applications to superconductivity, {\em Phys. Rev. B},  {\bf 50} (1994), No. 2, 1126--1135.  
\bibitem[Ga]{gallo} C. Gallo, The Cauchy problem for defocusing nonlinear Schr\"odinger equations with non-vanishing initial data at infinity. {\it Comm. PDE} {\bf 33} (2008), no. 4-6, 729--771.
\bibitem[Ge]{gerard} P. G\'erard, The Cauchy problem for the Gross-Pitaevskii equation, 
{\it Ann. Inst. H. Poincar\'e Anal. Non Lin.} {\bf  23} (2006), no. 5, 765--779. 
\bibitem[GE]{ge} L. P. Gor'kov, G. M.  Eliashberg,  Generalization of the Ginzburg-Landau equations for nonstationary
problems in the case of alloys with paramagnetic impurities. {\it Sov. Phys. JETP} {\bf 27}
(1968), 328--334.

\bibitem[HL]{hanli}Z.-C. Han, Y.-Y. Li, Degenerate elliptic systems and applications to Ginzburg-Landau type equations, part I, {\it CVPDE}, {\bf 4} (1996), No.  2,  171--202.

\bibitem[J1]{j}R. L. Jerrard, Lower Bounds for Generalized Ginzburg-Landau
  Functionals, {\em SIAM Journal Math. Anal.} {\bf 30},  (1999), No. 4,  721--746.
  \bibitem[J2]{jerrard3d} R. L. Jerrard, Vortex filament dynamics for Gross-Pitaevsky type equations, {\em Ann. Scuola Norm. Sup. Pisa Cl. Sci  (5)} {\bf 1} (2002),  733--768.
\bibitem[JS1]{js}R. L.  Jerrard, H.M.
Soner, Dynamics of Ginzburg-Landau vortices, {\em  Arch. Rat.
Mech. Anal.} {\bf 142} (1998), No. 2, 99--125.
\bibitem[JS2]{js2}R. L.
Jerrard, H.M. Soner, The Jacobian and the Ginzburg-Landau
functional, {\em Calc. Var. PDE. 14, No 2}, (2002), 151-191.
\bibitem[JSp1]{jsp1} R. L. Jerrard, D.  Spirn, Refined Jacobian estimates and Gross-Pitaevsky vortex dynamics.  {\it Arch. Rat. Mech. Anal.} {\bf 190} (2008), no. 3, 425--475.
\bibitem[JSp2]{jsp2} R. L.  Jerrard, D.  Spirn,  Hydrodynamic limit of the Gross-Pitaevskii equation. {\it Comm. PDE} {\bf 40} (2015), no. 2, 135--190.
\bibitem[KIK]{kik} N. B. 
Kopnin, B. I.  Ivlev, V. A. Kalatsky,  The flux-flow Hall effect in type II superconductors.
An explanation of the sign reversal. {\it J. Low Temp. Phys.} {\bf 90} (1993),
1--13.
\bibitem[KS1]{ks0} M.  Kurzke, D.  Spirn,  $\Gamma$-stability and vortex motion in type II superconductors. {\it Comm. PDE} {\bf 36} (2011), no. 2, 256--292.
  \bibitem[KS2]{ks} M. Kurzke, D. Spirn, Vortex liquids and the Ginzburg-Landau equation. {\it Forum Math. Sigma} {\bf 2} (2014), e11, 63 pp.
 \bibitem[KMMS]{kmms} M. Kurzke, C.  Melcher, R.  Moser, D. Spirn,
Dynamics for Ginzburg-Landau vortices under a mixed flow. {\it Indiana Univ. Math. J.} {\bf 58} (2009), no. 6, 2597--2621. 
\bibitem[Li1]{li}F.H. Lin, Some Dynamical Properties of Ginzburg-Landau
Vortices, {\it Comm. Pure Appl. Math.} {\bf 49}, (1996), 323-359.
\bibitem[Li2]{li2}F.H. Lin, Vortex Dynamics  for the Nonlinear Wave Equation,
{\em Comm. Pure Appl. Math., 52}, (1999), 737-761.
\bibitem[LX1]{lx}F. H. Lin, J. X. Xin, 
On the dynamical law of the Ginzburg-Landau vortices on the plane, {\it Comm. Pure Appl. Math.} {\bf 52} (1999), no. 10, 1189--1212.
\bibitem[LX2]{lx2}F. H. Lin, J. X. Xin, 
On the incompressible fluid limit and the vortex motion law of the nonlinear Schr\"odinger equation, {\it Comm. Math. Phys.} {\bf  200} (1999), no. 2, 249--274. 
\bibitem[LZ1]{lz} F. H. Lin,  P. Zhang,  {On the hydrodynamic limit of
Ginzburg-Landau vortices,} {\it  Disc. Cont. Dyn. Systems}  {\bf 6}
(2000), 121--142.
\bibitem[LZ2]{lz2} F. H. Lin, P. Zhang, Semi-classical Limit of the Gross-Pitaevskii Equation in an Exterior Domain, {\it Arch. Rat. Mech. Anal.} {\bf 179} (2005), 79--107. 
\bibitem[MB]{bertom} A. Majda, A. Bertozzi, {\em Vorticity and Incompressible Flow}, Cambridge texts in applied mathematics, Cambridge University Press, 2002.
\bibitem[MZ]{mz} N. Masmoudi, P. Zhang, Global solutions to vortex density equations arising from sup-conductivity, {\it Ann. Inst. H. Poincar\'e  Anal. non lin\'eaire} {\bf 22}, (2005), 441--458.
\bibitem[Mi]{miot} E. Miot, Dynamics of vortices for the complex Ginzburg-Landau equation. {\it Anal. PDE} {\bf 2}  (2009), no. 2, 159--186.
\bibitem[O]{otto} F. Otto, The geometry of dissipative evolution
equations: the porous medium equation, {\it Comm. PDE} {\bf 26},
(2001), 101--174.
\bibitem[RuSt]{rust} J.  Rubinstein, P.
Sternberg,  On the slow motion of vortices in the Ginzburg-Landau
heat flow, {\em SIAM J. Math. Anal. 26, no 6} (1995), 1452--1466.
\bibitem[PR]{pr} L. Peres, J. Rubinstein, Vortex Dynamics in $U(1)$ Ginzburg-Landau models, {\it Phys. D} {\bf 64} (1993) 299-309.
\bibitem[SR]{laure} L. Saint Raymond, {\em Hydrodynamic limits of the Boltzmann equation}. Lecture Notes in Mathematics, 1971, Springer, (2009).
\bibitem[Sa]{sa}E.
Sandier, Lower Bounds for the Energy of Unit Vector
  Fields and Applications, {\em J. Funct. Anal.} {\bf 
 152}  (1998), No. 2,  379--403.
 \bibitem[Sch]{schmid}
 A. Schmid,  A time dependent Ginzburg-Landau equation and its application to the problem of resistivity in the mixed state, {\it Phys. Kondens. Materie} {\bf 5}  (1966), 302--317.
\bibitem[SS1]{ss1} E. Sandier, S. Serfaty, Global Minimizers for the Ginzburg-Landau Functional below the First Critical Magnetic Field, {\em Annales IHP, Analyse non lin\'eaire.} {\bf 17} (2000), No. 1,  119--145.
\bibitem[SS2]{ss4}E. Sandier,  S. Serfaty, Limiting Vorticities
for the Ginzburg-Landau Equations, {\em Duke Math. J.} {\bf 117} (2003), No. 3, 403--446.

\bibitem[SS3]{ss6}{E. Sandier, S. Serfaty, A product estimate
for Ginzburg-Landau and corollaries,}  {\it J. Func. Anal.} {\bf 211} (2004),  No. 1, 219--244. 
\bibitem[SS4]{ss}
E. Sandier, S. Serfaty, Gamma-convergence of gradient flows with applications to Ginzburg-Landau, {\it  Comm. Pure Appl. Math.} {\bf 57}, No 12, (2004), 1627--1672.

\bibitem[SS5]{livre} E. Sandier, S. Serfaty,  {\it Vortices in the Magnetic Ginzburg-Landau Model}, Progress in Nonlinear Differential Equations and their Applications, vol 70, Birkh\"auser, (2007).  

\bibitem[Sch]{scho}S. Schochet, The point vortex method  for periodic weak solutions of the 2D Euler equations, {\it Comm. Pure Appl. Math.} {\bf 49} (1996), 911--965.

    \bibitem[Se1]{s2}S. Serfaty,
     Vortex collisions and energy-dissipation rates in the Ginzburg-Landau heat flow, part I: Study of the perturbed Ginzburg-Landau equation, {\it JEMS} {\bf  9}, (2007),  No 2, 177--217. part II: The dynamics, {\it JEMS} {\bf  9} (2007), No. 3, 383--426.
     \bibitem[Se2]{serfaty} S. Serfaty, Gamma-convergence of gradient flows on Hilbert and metric spaces and applications, {\it Disc. Cont. Dyn. Systems, A} {\bf 31}  (2011), No. 4, 1427--1451. 
\bibitem[ST1]{stice1} S. Serfaty, I. Tice, Lorentz Space Estimates for the Ginzburg-Landau Energy, {\it J. Func. Anal.} {\bf  254} (2008), No 3, 773--825.
\bibitem[ST2]{stice} S. Serfaty, I. Tice, Ginzburg-Landau vortex dynamics with pinning and strong applied currents, {\it  Arch. Rat. Mech. Anal.} {\bf  201}  (2011), No. 2,  413--464. 
\bibitem[SV]{sv} S. Serfaty, J. L. Vazquez, A Mean Field Equation as Limit of Nonlinear Diffusions with Fractional Laplacian Operators, {\it Calc Var. PDE} {\bf  49} (2014), no. 3-4, 1091--1120. 
\bibitem[Sp1]{sp1}D.
Spirn, Vortex dynamics for the full time-dependent Ginzburg-Landau
equations, {\it Comm. Pure Appl. Math.} {\bf 55} (2002), No. 5, 537--581.
\bibitem[Sp2]{sp2}D. Spirn, Vortex motion law for the Schr\"odinger-Ginzburg-Landau equations, {\it  SIAM J. Math. Anal.} {\bf  34} (2003), no. 6, 1435--1476.
\bibitem[Ti]{tice} I. Tice, Ginzburg-Landau vortex dynamics driven by an applied boundary current, {\it Comm. Pure Appl. Math. } {\bf 63} (2010), no. 12, 1622--1676. 
\bibitem[TT]{tt}J. 
Tilley,  D. Tilley, 
  {\it Superfluidity and superconductivity} Second edition.
   Adam Hilger, 1986.


\bibitem[T]{t}M. Tinkham, {\em Introduction to Superconductivity, 2d edition}, McGraw-Hill, (1996).
\bibitem[Yau]{yau} H. T. Yau, Relative entropy and hydrodynamics of Ginzburg-Landau models. {\it Lett. Math.
Phys. } {\bf 22} (1991), 63--80.
%
\end{thebibliography}
\end{document}